\newtheorem{theorem}{Theorem}[section]
\newtheorem{lemma}[theorem]{Lemma}
\newtheorem{proposition}[theorem]{Proposition}
\newtheorem{corollary}[theorem]{Corollary}
\newtheorem{example}[theorem]{Example}
\newtheorem{problem}[theorem]{Problem}
\theoremstyle{definition}
\theoremstyle{remark}
\newtheorem{remark}[theorem]{Remark}
\numberwithin{equation}{section}
\def\na{\nabla}
\newcommand{\e}{{\epsilon}}
\newcommand{\R}{{\mathbb R}}
\newcommand{\str}{\Gamma^*}
\newcommand{\RR}{R}
\newcommand{\W}{\Omega}
\newcommand{\bareta}{\bar{\eta}}
\newcommand{\etaone}{\bar{\eta}}
\newcommand{\barrho}{\bar{\rho}}
\newcommand{\aone}{{a}}
\newcommand{\atwo}{{b}}
\newcommand{\cH}{\mathcal{H}}
\newcommand{\mres}{\mathbin{\vrule height 1.6ex depth 0pt width
		0.13ex\vrule height 0.13ex depth 0pt width 1.3ex}}
\author{Mark Allen}
\address[Mark Allen]{Department of Mathematics, Brigham Young University, Provo,  UT}
\email{allen@mathematics.byu.edu}
\author{Dennis Kriventsov}
\address[Dennis Kriventsov]{Department of Mathematics, Rutgers University,  Piscataway, NJ}
\email{dnk34@math.rutgers.edu}
\author{Robin Neumayer}
\address[Robin Neumayer]{Department of Mathematical Sciences, Carnegie Mellon University, Pittsburgh, PA}
\email{neumayer@cmu.edu}
\thanks{
MA was partly supported by the Simons Collaboration Grant 637757. RN was partly supported by NSF Grant DMS-1901427 and by the Gregg Zeitlin Early Career Professorship. Part of this work was accomplished when the first two authors visited the third author at Carnegie Mellon University, when the first two authors visited the Mittag-Leffler Institute, and when the third author visited the Fields Institute. The authors would like to thank Max Engelstein for helpful discussions regarding connections to harmonic analysis.}
\title{Rectifiability and uniqueness of blow-ups for points with positive Alt-Caffarelli-Friedman limit}
\begin{document}

\maketitle
\begin{abstract}
		We study the {regularity of the interface between the disjoint supports of} a pair of nonnegative subharmonic functions. The portion of the interface where the Alt-Caffarelli-Friedman (ACF) monotonicity formula is asymptotically positive forms an $\mathcal{H}^{n-1}$-rectifiable set. Moreover, for $\mathcal{H}^{n-1}$-a.e. such point, the two functions have unique blowups, i.e. {their} Lipschitz rescalings converge in $W^{1,2}$ to a pair of nondegenerate truncated linear functions whose supports meet at the approximate tangent plane. The main tools used include the Naber-Valtorta framework and our recent result establishing a sharp quantitative remainder term in the ACF monotonicity formula. We also give applications of our results to free boundary problems. 
\end{abstract}

\section{Introduction}
Recent decades have seen a significant body of research focused on understanding the interplay between the geometry of subsets of Euclidean space, their analytic properties, and the behavior of solutions to partial differential equations on these sets.
 A central goal of this program is to give geometric conditions that imply or characterize the regularity---often rectifiability---of a given set. 
 Results of this type go back to Reifenberg \cite{Reifenberg}, and since pioneering work in the nineties \cite{Jones90, DavidSemmes91, BJ94, ToroDuke, Leger},
 there has been a steady stream of increasingly refined results with compelling applications. 
 Two  recent and fundamental results in this direction are Jaye, Tolsa, and Villa's resolution to Carleson's $\e^2$-conjecture \cite{JTV21} and Naber and Valtorta's  Rectifiable Reifenberg Theorem \cite{NVmain} (see also \cite{AzzamTolsa, Tolsa, ENV} for related results), both of which give criteria for a set to be rectifiable in terms of geometric square functions that measure flatness at all points and scales.  The latter, and the Naber-Valtorta framework more broadly, have powerful applications to singularity analysis in various nonlinear PDE, such as (approximate) harmonic maps \cite{NVmain, NVApprox}, 
 %$Q$-valued harmonic maps \cite{DMSV}, 
 stationary varifolds \cite{NVVarifolds},
 and free boundary problems \cite{EE19}, and will also play a key role here.
\smallskip
% proving that for a Jordan curve $\Gamma \subset \R^2$, the set of points where a certain multi-scale geometric square function is finite forms an $\mathcal{H}^1$-rectifiable set.  %Another important recent result in this vein is the
    % These geometric assumptions frequently involve multi-scale geometric square functions that encapsulate the local proximity of a set to a $k$-dimensional plane, summed up over all scales.
    % \cite{Reifenberg}, %L^infty betas square summable at all scales and points implies bi-holder to a ball. 
    % Jones  \cite{Jones90},  % A bounded planar set is contained in a rectifiable curve iff the L^infty beta numbers are square summable at all scales and points; this is where beta numbers were "introduced", except that Reifenberg used them already.
%David-Semmes \cite{DavidSemmes91}, % For d-dim Ahlfors regular sets in R^n,  they give ~5 equivalent conditions, including the L^1 beta numbers giving a Carleson measure (geometric condition) and the set containing big pieces of Lipschitz graphs. This is also where non-L^infty beta numbers were used for the first time.
%Bishop-Jones \cite{BJ94},
% Toro \cite{ToroDuke}, %A set that satisfies a stronger type of Reifenberg condition is bi-Lipschitz to an open subset of R^n
%Leger \cite{Leger}, % For a curve in R^n, if the Menger curvature is finite then the curve is rectifiable.
    
In this paper we give a different kind of rectifiability criterion, which relates the boundary behavior of nonnegative subharmonic functions to the regularity of a set.  The key quantity is the {\it Alt-Caffarelli-Friedman (ACF) monotonicity formula}
\begin{equation}\label{eqn: ACF formula}
	J_x(r) = \left( \frac{1}{r^2} \int_{B_r(x)} \frac{|\na u|^2}{|{x-y}|^{n-2}}\right)  \,\left(  \frac{1}{r^2} \int_{B_r(x)} \frac{|\na v|^2}{|{x-y}|^{n-2}} \right)
\end{equation}
for a pair of nonnegative subharmonic functions $u$ and $v$ with $u\cdot v=0$. Our first main result, Theorem~\ref{thm: main rectifiability} below, shows that the set $\str$ of  points $x$ in the  interface $\Gamma:=\partial\{u>0 \}\cup \partial \{v>0 \}$   where $\lim_{r\to 0}J_x(r)$ is positive forms an $\cH^{n-1}$-rectifiable set. Examples show the conclusion is in a sense optimal: this set does not need to have an approximate tangent space at {\it every} point, and the full interface need not be rectifiable.
Our second main result, Theorem~\ref{thm: uniqueness of blowups}, concerns the boundary behavior of the functions $u,v$ themselves: at $\cH^{n-1}$-a.e. point $x$ in $\str$, the functions have unique blowups to a pair of truncated linear functions. Our hypotheses are unconditional in that they require no a priori information about the interface, and the asymptotic positivity of $J$ is a condition that is easily checked in the context of free boundary problems. These two key features allow us to apply our result to a broad class of free boundary problems, which we outline in section~\ref{ssec: FBP}.
\smallskip

More precisely,  fix $n\geq 2$ and let $u, v :B_{10}(0)\to \R$ be nonnegative continuous functions  that satisfy
\begin{equation}\label{eqn: ACF setup}
\begin{cases}
	-\Delta u \leq 0 & \text{ in } \{ u>0\},\\
		-\Delta v \leq 0 & \text{ in }  \{ v>0\},\\
	u\cdot v =0 & \text{ in } B_{10}(0).
\end{cases}
\end{equation}
The simplest example of such a pair of functions is a pair of truncated linear functions supported on complementary half-planes meeting at a hyperplane interface, i.e.  
\begin{equation}
	\label{eqn: truncated linear functions}
u(y) = \aone((y-x)\cdot \nu)^+\qquad \text{ and }\qquad v(y) = \atwo ((y-x)\cdot \nu)^-\,
\end{equation}
for $x \in \R^n,$ constants $\aone, \atwo>0$ and $\nu \in \mathbb{S}^{n-1}.$ More generally, however, the condition   \eqref{eqn: ACF setup} is quite flexible and allows for a rich assortment of behaviors from $u,v$, and the interface between their supports; see the examples in section~\ref{ssec: examples}. One encounters the configuration \eqref{eqn: ACF setup} in various settings, for instance when $u$,$v$ are phases of a solution to the two-phase Bernoulli problem in models for jets and cavities \cite{acf84, acfmodel1, acfmodel2}, when $u$ and $v$ are the positive and negative parts of directional derivatives of solutions to obstacle-type problems \cite{PetrosyanBook, Shahgholian03}, or when $u$,$v$ are population densities for two species in segregation models \cite{CaffSeg, TerraciniSeg}. In all of these contexts, the regularity of the interface 
$\Gamma$
%$\partial\{ u>0\} \cup \partial \{v>0\}$
 is a priori unknown, and indeed a typical objective is to understand its regularity.  From the viewpoint of geometric measure theory and harmonic analysis, it is natural to use the boundary behavior of (sub-)solutions to PDE as a way to understand the regularity of an interface; this is typically studied via harmonic and elliptic measure \cite{HarmMeas1, HarmMeas2, ElliptMeas, ElliptMeas2}.
\smallskip
 
The ACF monotonicity formula \eqref{eqn: ACF formula} was introduced by Alt, Caffarelli, and Friedman in \cite{acf84} and since then has found a wide variety of applications, including to the settings described in the previous paragraph.
%\footnote{Add some citations here? We can re-cite the ones above or is that redudant? In you guys' counterexample paper, you cite some of Mark's papers here, let's add that. In our QACF paper we state that the ACF formula also has applications in harmonic analysis. Is this true? Do we have anything to cite here? \edit{I think the paragraph above already discusses this sufficiently, can just say that you can apply it to any of the examples there.}} 
%
The key feature of the quantity \eqref{eqn: ACF formula} is its monotonicity: given $u,v$ as above and $x \in B_4(0)$, the function $r \mapsto J_x(r)$ is nondecreasing for $r \in (0,4)$. 
Moreover,  if $J_x(r_1) = J_x(r_2)$ for $r_1<r_2$, then either one of $u,v$ is {identically} zero in $B_{r_2}(x)$ (in which case $J_x(r) \equiv 0$), or else $u$ and $v$ are a pair of complementary truncated linear functions as in \eqref{eqn: truncated linear functions} in $B_{r_2}(x)$; see \cite{CaffSalsa}.
\smallskip

This rigidity statement contains stronger information when $J_x(r_1)>0$, and in particular implies that the interface $\Gamma$ is locally a hyperplane. Instead if $J_x(r_1)=0$, the fact that $u$ or $v$ vanishes identically does not impose any interfacial structure. It is natural, then, that the subset of the interface $\Gamma$ where one expects good structure is the set of points where the ACF formula is asymptotically positive at small scales, that is, on
 	\begin{equation*}
\label{eqn: regular part}
	\Gamma^*  := \left\{ x  :  J_x(0^+) >0 \right\}.
		\end{equation*}
Here	$ J_x(0^+): = \lim_{r \to 0^+} J_x(r)$; monotonicity guarantees the existence of the limit.
The basis of this paper is to exploit {\it almost }rigidity  in the ACF monotonicity formula via our results in \cite{AKN1} to prove structural properties of $\str$ and interfacial regularity of $u$ and $v$ on this set. Our first theorem asserts the $\cH^{n-1}$-rectifiability of $\str$.
\begin{theorem}[Rectifiability]\label{thm: main rectifiability}
Fix $n\geq 2$ and let $u, v :B_{10}(0)\to \R$ be nonnegative continuous functions that satisfy \eqref{eqn: ACF setup}. Then $\Gamma^*\cap B_1(0)$ is $\mathcal{H}^{n-1}$-rectifiable.
\end{theorem}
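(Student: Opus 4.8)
The plan is to apply the Naber--Valtorta rectifiable-Reifenberg machinery, using the ACF quantity $J_x(r)$ as a monotone, almost-rigid "density" that controls the geometry of $\str$ at all points and scales. Concretely, one treats $\theta_x(r) := J_x(r)$ as the analogue of a density for a stationary varifold or an energy for a harmonic map: it is bounded, monotone nondecreasing in $r$, and by the almost-rigidity result of \cite{AKN1}, the \emph{defect} $J_x(2r) - J_x(r)$ controls quantitatively how far $(u,v)$ is, on $B_r(x)$, from a pair of complementary truncated linear functions as in \eqref{eqn: truncated linear functions}. On $\str$ the limit $J_x(0^+)$ is positive, so after localizing and decomposing $\str$ into countably many pieces on which $c \le J_x(0^+) \le J_x(1) \le C$ for fixed constants, the total "energy drop" $J_x(1) - J_x(0^+)$ is uniformly bounded, which is exactly the summable-defect hypothesis that Naber--Valtorta convert into an $\cH^{n-1}$ bound and rectifiability.

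First I would record the basic properties of $J$: monotonicity and the existence of $J_x(0^+)$ (stated in the excerpt), upper semicontinuity of $x \mapsto J_x(0^+)$ and an a priori upper bound for $J_x(1)$ on compact subsets (from standard interior estimates for subharmonic functions, after possibly shrinking the ball), and the rigidity/almost-rigidity dichotomy. Second, I would set up the Jones-type $\beta$-numbers $\beta_{\str}(x,r)$ measuring the $L^2$ flatness of $\str$ in $B_r(x)$ and prove the key \emph{Dini/Reifenberg-type estimate}: there is a constant $C$ such that, summed over dyadic scales, $\sum_{r} \beta_{\str}^2(x,r) \lesssim J_x(0^+\text{-neighborhood defect})$, i.e. the flatness square function at $x$ is controlled by the ACF energy drop along the relevant scales. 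This is where the quantitative ACF remainder from \cite{AKN1} enters: a scale $r$ at which $\str$ is not flat in $B_r(x)$ must be a scale at which $(u,v)$ is far from a linear pair, hence a scale with definite ACF defect; since the defects telescope and sum to at most $J_x(1) - J_x(0^+) \le C$, the square function is finite, with bound uniform on the pieces of the decomposition. Third, with this $L^2$-flatness-via-energy estimate in hand, I would invoke the Naber--Valtorta Rectifiable-Reifenberg theorem (as in \cite{NVmain}, in the packing/covering form used for singular sets) to conclude that each piece is $\cH^{n-1}$-rectifiable with locally finite $\cH^{n-1}$ measure; a countable union then gives rectifiability of $\str \cap B_1(0)$.

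The main obstacle I expect is establishing the quantitative link between \emph{geometric} flatness of the set $\str$ and \emph{analytic} closeness of $(u,v)$ to a linear pair --- that is, turning the almost-rigidity statement of \cite{AKN1} (which says: small ACF defect on $B_r(x)$ implies $(u,v)$ is $W^{1,2}$-close on a smaller ball to \emph{some} complementary truncated linear pair) into the statement needed for $\beta$-number control (namely: the \emph{interface} $\str$, not just the functions, is close in Hausdorff/$L^2$ sense to the corresponding hyperplane, at all points $x \in \str$ near the center and at the given scale). This requires (i) propagating closeness of $u,v$ to closeness of their free boundaries, using nondegeneracy information that must itself be extracted from $J_x(0^+) > 0$ (a point where $J$ is positive cannot have $u$ or $v$ vanishing identically nearby, and one needs a \emph{uniform} lower bound on the growth of $u$ and $v$ away from $\str$ on the pieces of the decomposition), and (ii) handling the fact that a priori the linear pair approximating $(u,v)$ at scale $r$ may depend on $r$ — one must show the approximating hyperplanes at comparable scales are close, which again follows from the smallness of the accumulated defect. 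A secondary technical point is that the Naber--Valtorta framework is usually phrased for a measure/varifold or a function with a monotone density; here the relevant "density" $J_x$ is a \emph{product} of two Riesz-energy integrals rather than a single monotone quantity, so one should either cite a sufficiently general version of the covering lemma or verify directly that the compactness and $L^2$-best-approximation estimates required by that framework hold for $J$, which the almost-monotonicity and almost-rigidity from \cite{AKN1} are designed to supply.
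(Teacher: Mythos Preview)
Your overall strategy is the paper's: decompose $\str$ into pieces on which $J_x(0^+)$ lies in a narrow range, prove an $L^2$ subspace approximation bounding $\beta_\mu^2(x,r)$ by the ACF defect, telescope over dyadic scales using that the total drop is small, and invoke the rectifiable Reifenberg theorem. The paper's decomposition is $S = \{x: J_x(0^+)\in[\e,\e e^\sigma]\}$; upper semicontinuity of $x\mapsto J_x(0^+)$ gives, for each $x_0\in S$, a small radius on which $\log(J_y(8r)/J_y(0^+))\le 2\sigma$ for all $y\in S$, and the Ahlfors upper bound from Theorem~\ref{thm: main estimates} lets one sum. Note the defect is $\log(J_x(8r)/J_x(r))$, not $J_x(2r)-J_x(r)$, to match the form of Theorem~\ref{t:ACF}.

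Where you diverge is exactly at the step you flag as the main obstacle. You propose to pass from $L^2$-closeness of $(u,v)$ to a linear pair to geometric closeness of the set $\str$ to a hyperplane, via nondegeneracy of $u$ and $v$ extracted from $J>\e$. In the pure subharmonic setup there is no pointwise nondegeneracy, and making this quantitative enough to get $d(y,L)^2\lesssim \text{(defect at $y$)}$ is delicate. The paper bypasses this entirely with a quantitative cone-splitting argument: at a fixed scale, pick a reference point $\bar x$ minimizing the defect and apply Theorem~\ref{t:ACF} there to get a linear pair with interface $L$; then at \emph{each} $y$ in the support of $\mu$ apply Theorem~\ref{t:ACF} again to get a linear pair whose zero set passes through $y$. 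Comparing the two linear pairs on an overlapping annulus (Lemma~\ref{l:qconesplitting}) gives $d(y,L)^2\le C\log(J_y(8r)/J_y(r))$ directly, with no reference to the zero set of $u$ or $v$. This is cleaner and yields the sharp exponent needed for the Reifenberg threshold; your route could likely be pushed through via compactness (the paper does something similar in Lemma~\ref{lemma: pre dichotomy}), but only qualitatively.
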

We will see examples in section~\ref{ssec: examples} demonstrating that on the remaining portion of the interface where $J_x(0^+)=0$, the interface can fail to have an approximate tangent plane at {\it every } point, even under the stronger assumptions that $u$ and $v$ are harmonic where they are positive.  Furthermore, $\str$ is not better than rectifiable, and in particular the assumption $J_x(0^+)>0$ does not imply the existence of an approximate tangent plane at $x$.
 Our second main theorem shows that the functions $u$ and $v$ have unique blowups at $\mathcal{H}^{n-1}$-a.e. point in $\Gamma^*$.

%In the context of two-phase free boundary problems, the functions $u$ and $v$ are the positive and negative parts of a solution and additionally satisfy a transmission condition across $\Gamma$. The transmission condition together with solving an equation often forces $\Gamma$ to be rectifiable (and often better). However, without a transmission condition, the rectifiability of $\Gamma$ is simply false in general, as we can see in the examples below.\footnote{This paragraph is terrible.}

%In turns out, as we show in Theorem~\ref{thm: main rectifiability}, that a natural subset of the interface is identified via the {\it Alt-Caffarelli-Friedman monotonicity formula}.

\begin{theorem}[Uniqueness of blowups]\label{thm: uniqueness of blowups}
Fix $n\geq 2$ and let $u, v :B_{10}(0)\to \R$ be nonnegative continuous functions that satisfy \eqref{eqn: ACF setup}. 
	For $\cH^{n-1}$-a.e. $x \in \Gamma^* \cap B_{1/2}(0)$, there exist constants $\aone^x, \atwo^x > 0$ and a unit vector $\nu^x \in \mathcal{S}^{n-1}$ such that the rescaled functions
	\begin{equation}
		\label{eqn: lipschitz rescalings}
		u^{x, r}(z) = \frac{u(x + r z)}{r} \qquad v^{x, r}(z) = \frac{v(x + r z)}{r}
		\end{equation}
	converge locally in the strong $W^{1,2}$ topology
	 to $\ell_+(z) = \aone^x (z \cdot \nu)^+$ and $\ell_-(z) = \atwo^x (z \cdot \nu)^-$, respectively. Moreover, $\aone^x\atwo^x = c_*J_x(0^+)$, where $c_* > 0$ is an explicit dimensional constant.
\end{theorem}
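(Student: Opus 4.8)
The plan is to combine the rectifiability of $\str$ from Theorem~\ref{thm: main rectifiability} with the quantitative remainder estimate for the ACF formula from \cite{AKN1}, using the existence of approximate tangent planes $\cH^{n-1}$-a.e. on a rectifiable set as the geometric input that pins down the blowup. First I would recall that, since $\str \cap B_1(0)$ is $\cH^{n-1}$-rectifiable, $\cH^{n-1}$-a.e. point $x \in \str \cap B_{1/2}(0)$ is a point of $\cH^{n-1}$-density one for $\str$ and admits an approximate tangent plane $P^x$; write $\nu^x$ for a unit normal to $P^x$. At such a point, monotonicity gives $J_x(0^+) =: J^x > 0$, and the almost-monotonicity/rigidity analysis shows that any blowup limit of the Lipschitz rescalings $u^{x,r}, v^{x,r}$ along a subsequence $r_j \to 0$ is a pair of complementary truncated linear functions $\ell_+ = a(z\cdot \nu)^+$, $\ell_- = b(z\cdot \nu)^-$ with $ab = c_* J^x$; this uses the usual compactness for subharmonic functions with the uniform Lipschitz bound (which follows from \eqref{eqn: ACF setup} and nondegeneracy encoded in $J^x>0$), together with the fact that the value of $J$ at scale zero is preserved under blowup and the equality case of the ACF formula characterized in \cite{CaffSalsa}. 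The content of the theorem is that the subsequence is unnecessary: the limit is independent of $r_j$, and convergence holds in the strong $W^{1,2}_{loc}$ topology.

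The key step is to upgrade from ``every blowup is a truncated linear pair'' to ``the blowup is unique.'' I would do this via a quantitative argument rather than an infinitesimal (Łojasiewicz-type) one, exploiting that we already know the set $\str$ is rectifiable and hence has a genuine tangent plane $P^x$ at a.e.\ point. The strategy: let $W_x(r)$ denote the $W^{1,2}(B_1)$ distance squared from the pair $(u^{x,r},v^{x,r})$ to the optimal truncated linear pair $(\ell_+^{x,r},\ell_-^{x,r})$ aligned with some direction $\nu$ and with the correct coefficient product $c_* J_x(r)$. By the sharp quantitative remainder in the ACF monotonicity formula from \cite{AKN1}, one controls $W_x(r)$ by the ACF deficit $J_x(2r) - J_x(r)$ (up to a change of scale and lower-order terms); more precisely, the remainder term estimate furnishes an inequality of the form $\int_{r}^{2r} (\text{flatness of } (u^{x,t},v^{x,t}))^2 \, \frac{dt}{t} \lesssim J_x(2r)-J_x(r/2)$. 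Since $t \mapsto J_x(t)$ is monotone and bounded, the right side is summable in dyadic scales, so $\sum_k [\text{flatness at scale } 2^{-k}]^2 < \infty$ along almost every point; in particular the flatness tends to $0$, and a telescoping/Cauchy argument shows the optimal approximating planes $\nu^{x,r}$ and coefficients $a^{x,r}, b^{x,r}$ form a Cauchy net as $r \to 0$. The limiting normal direction must coincide with the approximate tangent direction $\nu^x$ of $\str$ (since the supports of $u^{x,r},v^{x,r}$ are governed by $\str$ near $x$ and the tangent plane is unique a.e.), which identifies the blowup as $(\ell_+,\ell_-)$ with $\nu = \nu^x$ and $a^x b^x = c_* J_x(0^+)$.

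The remaining points are then: (i) passing from the summable-flatness/Cauchy estimate to genuine strong $W^{1,2}_{loc}$ convergence of $u^{x,r}$ (not merely of the coefficients and directions), which follows because the flatness quantity $W_x(r)$ directly bounds the $W^{1,2}(B_1)$ distance to the truncated linear pair and $W_x(r)\to 0$; a covering/scaling argument promotes $B_1$ convergence to $B_R$ convergence for every $R$ using the uniform Lipschitz bound and the fact that rescalings at different scales are comparable. (ii) Verifying the nondegeneracy $a^x, b^x > 0$ individually (not just their product): this uses $J^x > 0$, which forces both $u$ and $v$ to be nontrivial at all small scales, so neither limiting coefficient can vanish. (iii) Checking the constant $c_*$ is the same dimensional constant appearing in the equality case of \eqref{eqn: ACF formula}, which is a direct computation of $J_x(0^+)$ for the pair $(\ell_+,\ell_-)$.

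The main obstacle I anticipate is controlling the \emph{direction} $\nu^{x,r}$ of the optimal approximating half-space as $r \to 0$: the ACF remainder estimate of \cite{AKN1} naturally controls the $W^{1,2}$ distance to \emph{some} truncated linear pair at each scale, but to conclude uniqueness one must rule out the direction spiraling. This is exactly where the rectifiability of $\str$ (Theorem~\ref{thm: main rectifiability}) is essential rather than merely convenient: the approximate tangent plane of $\str$ at $x$ gives an external, scale-independent anchor for $\nu^{x,r}$, so that the a.e.\ existence of the tangent plane plus the summable flatness forces $\nu^{x,r}\to \nu^x$. Making this rigorous requires carefully relating the zero set structure of the rescaled functions to $\str$ at small scales — i.e., showing that the interface of $(u^{x,r},v^{x,r})$ Hausdorff-converges to $P^x$ in $B_1$ — which combines the flatness decay with the density-one property of $x$ in $\str$.
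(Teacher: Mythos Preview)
Your outline has the right shape in one respect---using the approximate tangent plane from rectifiability to pin down the direction $\nu^x$---but it has two genuine gaps.

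First, there is no ``uniform Lipschitz bound'' available from \eqref{eqn: ACF setup} and $J_x(0^+)>0$. The hypotheses are only subharmonicity with disjoint supports, and $J_x(r)\le \bar J$ bounds the \emph{product} of the two weighted Dirichlet energies, not each separately. Even the weaker statement $\limsup_{r\to 0}\|u^{x,r}\|_{L^2(B_1)}<\infty$ is not free: in the paper's proof this is its own step, argued by contradiction using Theorem~\ref{t:ACF} together with the convergence of the distributional Laplacians $\Delta u^{x,r}$. You would need to supply such an argument before any compactness or blowup step.

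Second, and more seriously, your telescoping/Cauchy argument for the coefficients $a^{x,r},b^{x,r}$ cannot work with the information you have. The ACF drop gives only $\sum_k (\text{flatness at }2^{-k})^2<\infty$, i.e.\ $\ell^2$ summability; a Cauchy estimate on the successive differences of $(a^{x,r},b^{x,r},\nu^{x,r})$ requires $\ell^1$ summability of the flatness, which is precisely the stronger Dini-type condition of \cite[Corollary 1.4]{AKN1} and fails in general (Example~\ref{ex: Spiral}). You correctly identify that rectifiability supplies an external anchor for $\nu^{x,r}$, but there is no analogous geometric anchor for the individual slopes $a$ and $b$---knowing their product $ab=c_*J_x(0^+)$ and the direction $\nu^x$ does not determine them. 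The paper resolves this with a different idea you are missing: the distributional Laplacians $\mu_u=\Delta u$ and $\mu_v=\Delta v$ are Radon measures, and by the Lebesgue--Besicovitch differentiation theorem their densities $\zeta_u(x),\zeta_v(x)$ with respect to $\cH^{n-1}\mres\str_\e$ exist $\cH^{n-1}$-a.e. Since $\Delta u^{x,r}\overset{*}{\rightharpoonup}\zeta_u(x)\,\cH^{n-1}\mres\{z\cdot\nu^x=0\}$ along \emph{every} sequence $r\to 0$, the slope of any blowup is forced to equal $\zeta_u(x)$, independent of subsequence. This measure-differentiation step is the mechanism that pins down $a^x$ and $b^x$ individually.
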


\subsection{Discussion of the proofs}
We use two main tools to prove Theorem~\ref{thm: main rectifiability}. The first is Naber-Valtorta's quantitative stratification and  rectifiable Reifenberg framework. Introduced in \cite{NVmain}, these techniques are designed to study singular sets in geometric PDE and are remarkably flexible:  whenever one has a solution to a PDE with (a) a monotonicity formula with a quadratic remainder term, and (b) strong compactness properties for sequences of solutions (usually coming from a priori estimates),  one can apply the Naber-Valtorta tools to prove rectifiability of the strata and estimates for the quantitative strata. If one additionally has an $\e$-regularity theorem,  this automatically gives packing and Hausdorff measure estimates for the full singular set.
\smallskip

The  present context does not fit within this standard setup. First, we do not consider solutions to a PDE, at least not in a conventional sense: in \eqref{eqn: ACF setup} we assume only that  $u$ and $v$ are {\it sub}solutions to the Laplace equation and impose no sort of transmission condition across the interface as one would in a free boundary problem. While subharmonicity provides important structure, the setup in \eqref{eqn: ACF setup} is simply too weak of an ``equation'' to expect a real regularity theory. Nonetheless,  functions satisfying \eqref{eqn: ACF setup} do enjoy a monotonicity formula, and this suggests some hope of applying the Naber-Valtorta ideas. However, until our recent  results in \cite{AKN1}, both key ingredients (a) and (b) described above were entirely missing in this context. 
\smallskip

The second key tool used to prove Theorem~\ref{thm: main rectifiability}, which equips us with ingredient (a),
is the sharp quantitative remainder term in the ACF monotonicity formula we proved in \cite{AKN1}; see Theorem~\ref{t:ACF} below. 
Often,  a monotonicity formula's proof automatically yields a quantitative remainder term that detects how far an object is from a ``cone'' solution; this is the case, for instance, for the area ratio for minimal surfaces,  the normalized energy for harmonic maps, and the Weiss formula for Bernoulli free boundary problems.
Instead, it is quite challenging to glean from the proof of the ACF formula how far functions $u,v$ satisfying \eqref{eqn: ACF setup} are from truncated linear functions as in \eqref{eqn: truncated linear functions}. The quantitative remainder term in \cite{AKN1} uses  a new type of sharp quantitative stability estimate for the Faber-Krahn inequality on the sphere, which in turn relies on delicate free boundary regularity results in \cite{AKN2}. 
Ingredient (b) in this context is false in the strength one might hope for, but again it is Theorem~\ref{t:ACF} that provides us with enough ($W^{1,2})$ compactness to adapt the Naber-Valtorta ideas.
\smallskip
% It is essential to the proof of Theorem~\ref{thm: main rectifiability}that our quantitative Faber-Krahn inequality has the optimal (quadratic) exponent. 
\smallskip

Another difference between more typical  applications of the Naber-Valtorta framework and the present setting is that points in $\str$ are not singular points. In classical singularity analysis,  one stratifies the singular set for a solution to a given PDE according to the number of symmetries of its blowups.
On the other hand, for $x \in \str$, every blowup takes the form \eqref{eqn: truncated linear functions}  and in particular is  ``$(n-1)$-symmetric.'' This allows for various simplifications. Most notably, in Section~\ref{sec: rect}, we give a covering argument that is significantly more streamlined than the delicate good tree/bad tree constructions used in other settings.
Throughout the proof of Theorem~\ref{thm: main rectifiability}, we will attempt to emphasize which steps are standard applications of Naber-Valtorta machinery and which steps involve novel ideas or significant departures  from the usual approach. 
As far as the global structure of the Naber-Valtorta framework, we have in particular followed the presentation in \cite{EE19}.
\smallskip

Theorem~\ref{thm: uniqueness of blowups} does not have an analogue in the Naber-Valtorta theory,  and uniqueness of blowups is {generally a} subtle question. 
%The proof of Theorem~\ref{thm: uniqueness of blowups} is based on geometric measure theoretic arguments, using rectifiability of Thoerem~\ref{thm: main rectifiability}, the quantititative estimates on Theorem~\ref{thm: main estimates} below, and the precise form \eqref{eqn: truncated linear functions} of blowups for the Alt-Caffarelli-Friedman monotonicity formula, and the relationship between the blowups and the density of the distributional Laplacian.
Theorem~\ref{thm: uniqueness of blowups} is proven through geometric measure theoretic arguments, using the rectifiability of $\str$ and the measure estimates of Theorem~\ref{thm: main estimates} below together with the precise form \eqref{eqn: truncated linear functions} of blowups and the quantitative  ACF monotonicity formula from \cite{AKN1}. 
 For any sequence $r_k \to 0$, the rescaled functions \eqref{eqn: lipschitz rescalings} at $x\in \str$ subsequentially converge  to a pair of truncated linear functions as in \eqref{eqn: truncated linear functions}. A priori, the slopes $\aone$ and $\atwo$ and the direction $\nu$ might depend on the sequence of scales --- and  for some points they do, as we will see in Example~\ref{ex: Spiral} below. Using Theorem~\ref{thm: main rectifiability}, we show that $\nu$ is determined up to a sign at $\cH^{n-1}$-a.e. point by the unique approximate tangent plane to $\str$. To prove the a.e. uniqueness of the slopes $\aone, \atwo,$ we use the differentiation theory for measures and Theorem~\ref{thm: main estimates} to show that the distributional Laplacians $\Delta u$ and $\Delta v$ have densities with respect to $\str \mres \cH^{n-1}$ at $\cH^{n-1}$-a.e. point in $\str$. The quantitative form of the ACF formula then allows us to relate these densities (which are independent of the blowup sequence) to the {slopes $a, b$}.

\subsection{Quantitative estimates}

Following \cite{CN, NVmain}, we split $\str$ into smaller pieces on which we can obtain quantitative control. Since the ACF formula becomes degenerate when $J(0^+)=0$, we divide $\str$ into pieces where $J_x(0^+)$ is {\it uniformly} positive, and then enlarge these pieces to include points where $J_x(r)$ is uniformly positive down to a definite scale. More specifically, let
$u, v :B_{10}(0)\to \R$ be nonnegative continuous functions that satisfy \eqref{eqn: ACF setup}.
	Fix $\e>0$ and $r>0$, and define the sets
	\begin{align*}
	 \str_\e  = \{ x\in \str \ : \ J_x(0^+) \geq \e\}\,,  & \qquad  \text{ and }\qquad  \str_{\e,r} = \{x \in \str \ : \ J_x(r) \geq \e  \}\,.
	\end{align*}	
Observe that $\str_\e = \bigcap_{r>0} \str_{\e, r} $ and $\Gamma^* = \bigcup_{\e >0} \str_\e$, and that $
	\str_{\e, r} \subset \str_{\delta, s}$ if $\delta < \e \text{ and }r<s.$
 Following \cite{NVmain}, we prove the following  main quantitative estimates that will lead to Theorem~\ref{thm: main rectifiability}.
\begin{theorem}\label{thm: main estimates}
Fix $n\geq 2,$  $\e >0$, and $\bar{J}_0 >0$. 
There is a constant $C_0 = C_0(n,\e, \bar{J}_0)>0$ such that the following holds. 
	Let $u, v :B_8(0)\to \R$ be nonnegative continuous functions satisfying \eqref{eqn: ACF setup} with $0 \in \Gamma(u, v)$ and $\sup_{x \in B_4} J_x(4)\leq \bar{J}_0$. For every $r \in (0,1]$, we can find a finite collection of balls $\{B_r(x_i)\}_{i=1}^N$ such that 
	\begin{equation}\label{eqn: main cover conclusion}
			\str_{\e ,r} \cap B_1(0) \subset \bigcup_{i=1}^N B_r(x_i) \qquad \text{with}  \qquad N \leq C_0 \, r^{1-n}\,.
	\end{equation}
	In particular,  for every $r \in (0,1]$,
	\begin{equation}
		\label{eqn: eps r mink bound}
	\left|B_r(\str_{\e, r}) \cap B_1(0) \right| \leq C_0\, r \qquad \text{and} \qquad 	\mathcal{H}^{n-1} (\str_\e  \cap B_1(0) ) \leq C_0\,.
		\end{equation}
	\end{theorem}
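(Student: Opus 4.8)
The plan is to run the Naber--Valtorta quantitative stratification and rectifiable Reifenberg machinery, with the ACF formula $J_x(r)$ in the role of the monotone energy and the sharp ACF remainder of Theorem~\ref{t:ACF} supplying the quantitative rigidity --- and enough $W^{1,2}$ compactness, which is otherwise too weak here. Since $r\mapsto J_x(r)$ is monotone and bounded by $\bar{J}_0$ on $B_4$, its dyadic total variation satisfies $\sum_j\bigl(J_x(2^{-j})-J_x(2^{-j-1})\bigr)\le \bar{J}_0$ at every point --- the finite-total-energy input the framework requires. Theorem~\ref{t:ACF} turns smallness of a drop $J_x(2^{-j+1})-J_x(2^{-j-1})$ into $W^{1,2}$-closeness of the Lipschitz rescalings $u^{x,2^{-j}},v^{x,2^{-j}}$ of \eqref{eqn: lipschitz rescalings} to a pair of truncated linear functions $\ell_\pm$ as in \eqref{eqn: truncated linear functions}; since such a pair is invariant under translation orthogonal to $\nu$, this says exactly that $(u,v)$ is ``$(n-1,\delta)$-symmetric'' at $x$ at scale $2^{-j}$, with $\delta\to0$ as the drop $\to0$. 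On $\str_{\e,r}$ one moreover has $J_x(s)\ge\e$ for all $s\in[r,4]$ (by monotonicity, since $J_x(r)\ge\e$), which keeps both ACF factors nondegenerate, so the pinched limit has slopes $a,b>0$ with $ab\gtrsim_\e 1$ and a well-defined, stable direction $\nu$; a short compactness argument then upgrades $(n-1,\delta)$-symmetry to geometric flatness, $\str\cap B_{s/2}(x)\subset\{\,\mathrm{dist}(\cdot,L)\le\eta(\delta)\,s\,\}$ for some affine $(n-1)$-plane $L$. A convenient simplification over the usual picture is that every blowup at a point of $\str$ is already $(n-1)$-symmetric, so there is no quantitative stratification to manage: the good/bad dichotomy collapses to the clean one, pinched (hence flat) versus not.

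The analytic core is the $L^2$-best-approximation (or ``key'') estimate. With $\beta^{n-1}_\mu(y,\rho)^2:=\inf_L\rho^{-1-n}\int_{B_\rho(y)}\mathrm{dist}(z,L)^2\,d\mu(z)$ the Jones number of a finite measure $\mu$ ($L$ running over affine $(n-1)$-planes), one shows that whenever $\mu$ is supported in $\str_{\e,r}\cap B_s(x)$ and obeys an a priori Ahlfors bound $\mu(B_\rho(y))\le\Lambda\rho^{n-1}$ at scales $\rho\ge r$, then
\[
\int_{B_s(x)}\int_r^s\beta^{n-1}_\mu(y,\rho)^2\,\frac{d\rho}{\rho}\,d\mu(y)\;\le\;C(n,\e,\Lambda)\,s^{n-1}\Bigl(\sup_{y\in B_{2s}(x)\cap\str}J_y(4s)-\inf_{y\in B_{2s}(x)\cap\str}J_y(s/4)\Bigr).
\]
The mechanism is quantitative cone-splitting: pinching $J$ at enough spread-out points of $\str_{\e,r}$ would, by Theorem~\ref{t:ACF}, force $(u,v)$ toward truncated linear configurations with incompatible directions $\nu_y$, which cannot coexist on overlapping regions; hence a $\mu$-definite mass of those points must instead carry a definite drop of $J$ on $B_{2s}(x)$. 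Quantifying this charges each unit of flatness defect against a unit of monotonicity, and yields the displayed inequality.

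Granting these ingredients, the packing bound \eqref{eqn: main cover conclusion} follows by induction on scales, as in \cite{NVmain,EE19}. Fix $r\in(0,1]$, let $\{x_i\}$ be a maximal $r$-separated subset of $\str_{\e,r}\cap B_1(0)$, and set $\mu=\sum_i r^{n-1}\delta_{x_i}$; it suffices to prove $\mu(\R^n)\le C_0$. Refine $B_1(0)$ dyadically into a tree of balls, declaring a ball $B_\rho(y)$ \emph{bad} if $J$ oscillates by more than a fixed $\eta(\e)$ over the $\str$-points of $B_{2\rho}(y)$ at scales comparable to $\rho$, and \emph{good} otherwise. Along the good part the flatness statement of the first paragraph makes $\mathrm{supp}\,\mu$ uniformly close to an $(n-1)$-plane at every scale down to $r$, so the key estimate verifies the smallness hypothesis of the Naber--Valtorta discrete Reifenberg theorem for $\mu$ there; this simultaneously reestablishes the inductive Ahlfors bound $\mu(B_\rho(y))\le C(n,\e)\rho^{n-1}$ and produces a covering of the good part of $\str_{\e,r}\cap B_\rho(y)$ by at most $C(n,\e)(\rho/r)^{n-1}$ balls of radius $r$. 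The bad balls are few: each spends at least $\eta$ of the total budget $\bar{J}_0$, so a Vitali/energy argument caps their number in the whole tree by $C(n,\e,\bar{J}_0)$ and thus limits their effect on the count by the same factor. Summing over the tree gives $N\le C_0\,r^{1-n}$ with $C_0=C_0(n,\e,\bar{J}_0)$. Finally $|B_r(\str_{\e,r})\cap B_1(0)|\le N\,|B_r|\le C_0\,r$, and letting $r\to0$, using $\str_\e\subset\str_{\e,r}$, gives $\cH^{n-1}(\str_\e\cap B_1(0))\le C_0$ from the definition of Hausdorff measure.

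I expect the main obstacle to be the $L^2$-best-approximation estimate, in particular the passage from analytic to geometric control. Theorem~\ref{t:ACF} constrains $u,v$ only in $W^{1,2}$ and says nothing \emph{directly} about where the interface sits, and \eqref{eqn: ACF setup} is far too weak to support any interface regularity theory, so ``$\str$ close to $\{(y-x)\cdot\nu=0\}$'' cannot be deduced from ``$u,v$ close to $\ell_\pm$'' by elliptic estimates; it has to be extracted from the nondegeneracy $J\ge\e$ together with the continuity of $u,v$ and a compactness argument. This is also exactly the point where restricting to $\str_{\e,r}$ rather than to all of $\str$ is indispensable, since the estimate genuinely degenerates as $\e\to0$. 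A secondary wrinkle is the apparent circularity of the Ahlfors bound needed to even state the key estimate, handled, as above, by carrying the density estimate along within the scale induction.
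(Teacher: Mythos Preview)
Your proposal is correct in outline and follows the same Naber--Valtorta strategy as the paper, but two points deserve comment.

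First, your $L^2$ best-approximation estimate is stated with a sup--inf oscillation of $J$ on the right-hand side and an Ahlfors hypothesis on $\mu$. The paper's version (Theorem~\ref{t:l2est2}) is cleaner: for \emph{any} finite Borel measure $\mu$, without any support or density assumption,
\[
\beta_\mu(x,r)^2 \le \frac{C}{r^{n-1}}\int_{B_r(x)}\log\frac{J_y(8r)}{J_y(r)}\,d\mu(y),
\]
proved directly from Theorem~\ref{t:ACF} by comparing the best-approximating linear pairs at the $J$-minimizing point $\bar x$ and at a generic $z\in\mathrm{supp}\,\mu$ (Lemmas~\ref{l:qconesplitting}, \ref{l:transfer}), bypassing the usual inertia-matrix computation. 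The log-drop form telescopes over dyadic scales, and the Ahlfors bound enters only afterward, inside the packing induction (Proposition~\ref{lem: packing estimate for disjoint balls with small energy drop}), which dissolves the apparent circularity you flag at the end.

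Second, you sketch a good/bad tree, but the paper observes that this is unnecessary here: since every blowup at a point of $\str$ is already $(n-1)$-symmetric, the dichotomy takes the sharp form of Proposition~\ref{prop: key dichotomy} --- either \emph{all} points of $\str_{\e,\eta r}\cap B_{2r}$ have small drop down to scale $\bar\rho r$, or \emph{all} have a definite drop by scale $4\eta r$. This permits a single stopping-time covering (Lemma~\ref{lem: main packing}) in place of the tree construction, followed by at most $\lceil \bar J_0/\eta\rceil$ iterations to force every ball down to radius $R$. Your approach would work, but the paper's is substantially shorter. Your concern about passing from analytic to geometric flatness is handled exactly as you anticipate, by compactness and the nondegeneracy $J\ge\e$ (Lemma~\ref{lemma: pre dichotomy}, via Corollary~\ref{cor: J convergence}).
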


%\noindent\textit{Acknowledgements:}
%MA was partly supported by the Simons Collaboration Grant 637757. RN was partly supported by NSF Grant DMS-1901427 and by the Gregg Zeitlin Early Career Professorship. Part of this work was accomplished when the first two authors visited the third author at Carnegie Mellon University, when the first two authors visited the Mittag-Leffler Institute, and when the third author visited the Fields Institute. The authors would like to thank Max Engelstein for helpful discussions regarding connections to harmonic analysis.
\section{Examples, Applications, and Further Connections}
In this section, we present examples demonstrating the sharpness of our main results, discuss connections to harmonic analysis, and give applications to free boundary problems.

\subsection{Examples}\label{ssec: examples}
Let us give some examples of different pairs of functions $u$ and $v$ satisfying \eqref{eqn: ACF setup} showing what sort of behavior is and is not possible for such functions and their interfaces. Although given in two dimensions, the examples extend to higher dimensions simply by defining $\tilde{u}(x_1, \ldots, x_n) =u(x_1,x_2)$ and $\tilde{v}(x_1, \ldots, x_n) =v(x_1,x_2)$.
The first example shows that Theorem~\ref{thm: main rectifiability} cannot be strengthened to say that the full interface $\partial \{ u>0\} \cup\partial\{v>0\}$ is rectifiable, even with the strengthened assumptions that $u,v$ are {harmonic} in their supports.
\begin{example}[Koch Snowflake]\label{ex: Koch snowflake}
{\rm 
Let $\Gamma \subset \R^2$ be the Koch snowflake, a Jordan curve that is not rectifiable, and let $\Omega$ be the bounded region it encloses. By the {Riemann} mapping theorem and the conformal invariance of the Laplacian in $\R^2$, we can find positive continuous harmonic functions $u$ on $(B_{10} (0)\cap \{x_2> 0\}) \cap \Omega$ and $v$ on $(B_{10} (0)\cap \{x_2> 0\})\setminus \Omega$ that both vanish, say, on the set $\Gamma\cap \{x_2>0\}$. The interface $\partial \{u>0\} \cup \partial\{ v>0\}$ is equal to $\Gamma \cap \{x_2>0\}$. This set is not $\mathcal{H}^1$-rectifiable and in fact {\it no} point on $\partial \{u>0\} \cup \partial \{v>0\}$ has an approximate tangent. %\textcolor{blue}{This example may be extended to higher dimensions by writing $\R^n=\R^2 \times \R^{n-2}$ and letting $\tilde{u}(x_1, \ldots, x_n) =u(x_1,x_2)$ and $\tilde{v}(x_1, \ldots, x_n) =v(x_1,x_2)$ which has the interface $\Gamma \times \mathbb{R}^{n-2}$.}
}\end{example}
\begin{remark}
	{\rm By Theorem~\ref{thm: main rectifiability} we see that the set $\str =\{ J_x(0^+)>0\} $ has $\mathcal{H}^{1}$-measure zero in Example~\ref{ex: Koch snowflake} above. In fact, we can see more directly that $J_x(0^+)=0$ for {\it every} $x \in \Gamma \cap \{x_2>0\}$. Indeed, if it were the case that $J_x(0^+)>0$ at some $x \in \Gamma \cap \{x_2>0\}$, then using the rigidity in the ACF monotonicity formula, we could deduce that every sequence of scales $r_k \to 0$ has a further subsequence along which the functions $u^{x,r_k}$ and $v^{x, r_k}$ converge to a pair of complementary truncated linear functions as in \eqref{eqn: truncated linear functions}. On the other hand, choosing a point $x$ and sequence $r_k$ along which $(\Gamma-x)\cap B_{r_k}(0)/r_k$ looks identical for all $k$, we see this cannot happen.
	}
\end{remark}
%An analogous example can be constructed in higher dimensions.
%\begin{example}
%	{\rm 
%	For $n\geq 2$, write $\R^n=\R^2 \times \R^{n-2}$ and again let $\Gamma \subset$ be the Koch snowflake and consider $\tilde{\Gamma} = \Gamma \times R^{n-2}$; this set is not $\cH^{n-1}$-rectifiable and in fact nowhere has a tangent plane From the self similarity and the Wiener criterion, we may find positive continuous harmonic functions $u,v$ on either side of $\tilde{\Gamma}\cap B_1(0)$ whose zero set in $B_{1}(0)$ is $\tilde{\Gamma}$. 
%	}
%\end{example}
This example also demonstrates how the vanishing of one of the functions $u,v$ does not impose any structure on the interface:
\begin{example}
	{\rm 
Let $\Omega$ and $u$ be as in Example~\ref{ex: Koch snowflake} and let $v$ be identically equal to zero. The ACF monotonicity formula is constantly equal to zero at every point and scale, but as we saw above $\partial\{ u>0\}$ does not have an approximate tangent at any point.
	}
\end{example}

The next example demonstrates that the existence of an approximate tangent plane at $x$ does not imply that $J_x(0^+)>0$. 
\begin{example}\label{ex 2.4}
     {\rm
Let $f: \mathbb{R} \to \mathbb{R}$ be $C^1$, symmetric, nonnegative, increasing for $x>0$, and $f(0)=0$. Let $u$ be positive and harmonic in $\Omega=\{(x,y) \in B_1 \mid y>f(x)\}$ and vanishing on the boundary $\Gamma=\{(x,y) \in B_1 \mid y=f(x)\}$. Similarly, let $v$ be positive and harmonic in $B_1 \setminus \Omega$ and vanishing on $\Gamma$. Then $\Gamma$ will have a tangent at every point. 
A basic computation (see \cite{AKN1}) shows that
\[
	(\log J_0)'(r) \geq \frac{1}{r}[\alpha_u(r) + \alpha_v(r) - 2],
\]
where $\alpha_u(r),\alpha_v(r)$ are the characteristic constants (see Section \ref{s:carleson}), and in this example 
$\alpha_u(r)=\pi-2\arctan(f(r)/r)$ and $\alpha_v(r)=\pi+2\arctan(f(r)/r)$. For $r$ small, we then obtain 
\[
 (\log J_0)'(r) \geq \frac{1}{r}[\alpha_u(r) + \alpha_v(r) - 2] \geq \frac{ [\arctan(f(r)/r)]^2}{2r}. 
\]
By choosing $f(r)/r \to 0$ but slowly enough so that $[\arctan(f(r)/r)]^2$ is not Dini-integrable, it follows that $J_0(0^+)=0$ despite a tangent for $\Gamma$ at the origin. 
%In this example, $J_0(0^+)=  c \lim_{r \to 0} r^{-2} \omega_1(\Gamma \cap B_r) \omega_2 (\Gamma \cap B_r)$ for some constant $c$, and where $\omega_1, \omega_2$ are harmonic measures for $\Gamma$ from $u,v$ respectively.  Furthermore, using Beurling's estimate (as shown in \cite{BJ94}), we have 
%\[
 %\frac{\omega_1(\Gamma \cap B_r) \omega_2 (\Gamma \cap B_r)}{r^2} \leq C_1 \text{exp} \left(-C^2  \int_r^1 \epsilon^2(0,t) \frac{dt}{t}\right),
%\]
%where $\epsilon(0,t)=2\arctan(f(t)/t)$. (The quantity $\epsilon(x,t)$ is defined in section \ref{s:carleson}). By choosing $f(t)/t \to 0$ but slowly enough so that $\epsilon^2(0,t)$ is not Dini-integrable, it follows that $J_0(0^+)=0$ despite a tangent for $\Gamma$ at the origin. 
}
\end{example}

The next example is a converse to the example above, was constructed by the first two authors in  \cite{ak20},  and shows that in 
Theorem~\ref{thm: main rectifiability}, the existence of  an approximate tangent plane at $\cH^{n-1}$-a.e. point in $\str$ cannot be improved to  an approximate tangent at {\it every} point in $\str$. 
\begin{example}[Spiraling interface with $J(0^+)>0$] \label{ex: Spiral}
{\rm
In \cite{ak20}, the first two authors construct a pair of continuous, nonnegative harmonic functions $u,v$ in the plane with disjoint positivity sets such that  $J_0(0^+)>0$ but the interface $\Gamma = \partial \{ u>0\} \cup \partial \{ v>0\}$ does not admit an approximate tangent at $0$. In this example $\Gamma$ is a spiral; in any annulus $B_1(0)\setminus B_r(0)$, $\Gamma$ is a piecewise smooth connected hyperplane, but it spirals in such a way that the closest pair of complementary truncated linear functions \eqref{eqn: truncated linear functions} at scale $r$ has orientation $\nu(r)$ that rotates and does not converge at $r\to 0$.
}
\end{example}
\begin{remark}
	{\rm
	In  \cite[Corollary 1.4]{AKN1}, we give a pointwise criterion ruling out behavior as in Example~\ref{ex: Spiral} at a point $x \in \str$: if $\sum_j \log(J_x(2^{-j})/J_x(0^+))^{1/2} <+\infty$, then the functions $u,v$ have unique of blowups and $\str$ has an approximate tangent at $x$. This criterion asks for {\it very} small energy drop at each dyadic scale, whereas $J_x(0^+)>0$ imposes the weaker condition $\sum_j \log(J_x(2^{-j})/J_x(2^{-j-1})) < +\infty$ on the dyadic energy drop.
	}
\end{remark}

Unlike the study of singular points, when an $\epsilon$-regularity result allows one to pass estimates for a small enough $\epsilon$ to the entire top stratum of singular points, in our situation one cannot expect to pass the measure estimates of Theorem \ref{thm: main estimates} for a small fixed $\epsilon$ to all of $\str$.
The next example illustrates that such estimates are not available for all of $\str$.
\begin{example}
 {\rm
 We modify the construction of the Koch snowflake. The heuristic idea is to retain line segments while continuing the construction on others. In this manner, we obtain a boundary that contains straight line segments whose union is infinite in length. Specifically,  in the first iteration of the snowflake we have an equilateral triangle whose sides have length one, so the total length of the triangle is $3$ which is greater than $2$. We retain two line segments whose union we label as $R_1$ (which has total length $2$ which is greater than $1$), and continue the Koch construction on the third line segment. We run enough Koch iterations on the third line segment until the length is greater than $2$, and label this piece $K_2$, so that the resulting figure is $R_1 \cup K_2$. We choose only one line segment of $K_2$ on which to continue the Koch iterations, and label the union of the remaining line segments we retain as $R_2$ (which has total length greater than $1$). We inductively continue in the same manner as follows: suppose $K_i$ and $R_i$ have been chosen. We run enough Koch iterations on the one line segment of $K_i\setminus R_i$ to obtain $K_{i+1}$ with $\mathcal{H}^{1}(K_{i+1}) > 2$. We then choose one line segment of $K_{k+1}$ and retain the union of the remainder of the line segments as $R_{i+1}$. Continuing inductively, we obtain a limiting figure which we label $\Gamma$. We define $\Omega, u, v$ similarly to Example~\ref{ex: Koch snowflake} (after possibly rotating $\Gamma$). We note that $\mathcal{H}^1(R_i)>1$  for each $i$, and each $R_i$ consists of the union of finitely many line segments. The interior of each line segment is contained in $\Gamma^*$. Also, $\mathcal{H}^{1}(R_i)>1$, and $\mathcal{H}^{1}(R_i \setminus \Gamma^*)=0$. Since $\mathcal{H}^{1}(R_i \cap R_j)=0$ for $i \neq j$ and since there are infinitely many $R_i$, it follows that $\mathcal{H}^{1}(\Gamma^*)=\infty$.  
 }
\end{example}

\subsection{Connections to harmonic analysis, Bishop's conjecture, and Carleson's $\e^2$ problem} \label{s:carleson}
A problem in harmonic analysis closely related to Theorem \ref{thm: main rectifiability} is known as Bishop's conjecture. Given a pair of complementary domains $\W^\pm$, let $\omega^\pm$ be the harmonic measure on $\partial \W^\pm$ (with pole somewhere in $\W^\pm$). In \cite{Bishop} Bishop proved that in the plane, if $\omega^+ \ll \omega^- \ll \omega^+$ on a set $E \subset \partial \W^+ \cap \partial \W^-$, then $\omega^\pm$ are mutually absolutely continuous with respect to ($1$-dimensional) Hausdorff measure on a subset $F \subset E$ with $\omega^\pm(E \setminus F)=0$, and $F$ is $1$-rectifiable. A sequence of papers \cite{KPT,AMT,AMTV} proved similar statements in arbitrary dimensions with progressively weaker assumptions on $\W^\pm$, with \cite{AMTV} removing all assumptions and fully establishing the conjecture.

At least if the $u, v$ in $J_x$ are a pair of Green's functions, the hypothesis that $J_x(0^+) > 0$ on $E$ is different and to some extent stronger than the assumption that $\omega^\pm$ are mutually absolutely continuous. Our conclusion is also stronger in the sense that we show that the full set $E$ is  $\cH^{n-1}$ rectifiable, a conclusion which requires stronger assumptions than those in Bishop's conjecture. The proofs of the results in \cite{KPT,AMT,AMTV}, while also exploiting the monotonicity of $J$, are based on deep connections between rectifiability and Riesz transforms, and are very different from the method here. Our argument is more elementary and direct, which potentially leads to more quantitative information (such as Theorem \ref{thm: main estimates}).

Also studied in the harmonic analysis literature is the ``converse'' Bishop's conjecture: if one knows that $E$ is  $\cH^{n-1}$-rectifiable, are $\omega^\pm$ mutually absolutely continuous on $E$? When $n \geq 3$, some mild density assumptions on $\W^\pm$ are imposed, and ``rectifiable'' is replaced by the existence of strong tangents, this is shown in \cite{AMT} (when $n = 2$ it is contained in \cite{Bishop}). It is unclear to what extent this resolves the problem converse to Theorem \ref{thm: main rectifiability}, namely:
\begin{problem}\label{p1} Assume that $E = \partial \{u > 0\} \cap \partial \{v > 0\}$ is  $\cH^{n-1}$ rectifiable. Is $J_x(0^+) > 0$ for $\cH^{n-1}$-a.e. $x \in E$ (under some minimal assumptions on $E$ or $u, v$)?
\end{problem}
Example~\ref{ex 2.4} shows that in Problem~\ref{p1}, $J_x(0^+)$ need not be positive for {\it every} $x \in E$.
From a slightly different perspective: Bishop and Jones \cite{BJ94} proved the following striking result: for a suitably nice domain $\Omega \subset \R^2$ (e.g. one bounded by a Jordan curve), the set of points for which
\[
	\beta_\infty(x, r) = \inf_{\substack{L \text{ a line}, \\ L \cap B_r(x) \neq \varnothing}} \sup_{x \in \partial \W \cap B_r(x)} \frac{d(x, L)}{r}
\]
satisfies the square-Dini condition
\[
	\int_0^1 \frac{\beta_\infty^2(x, r)dr}{r} < \infty
\]
is precisely the rectifiable portion $\partial \W$ (up to sets of $\cH^1$ measure $0$). They pose a more general question along these lines: for which choices of geometric ``square function'' (to use the term loosely, as in \cite{JTV21}) in place of $\beta_\infty$ (as well as dimension and \emph{a priori} assumptions on $\partial \W$) does one recover such a rectifiability criterion? There has been substantial study of this topic (\cite{Tolsa}), and the rectifiable Reifenberg theorems of \cite{NVmain} fit naturally into this program. A particularly striking recent result of \cite{JTV21} is that in two dimensions $\beta_\infty$ can be replaced by the quantity $\e(x, r)$, defined as follows. Let $I^+(x, r)$ length of the longest arc contained in $\W \cap B_r(x)$, and $I^-(x, r)$ the length of the longest arc contained in $B_r(x) \setminus \bar{\Omega}$; then $\e(x, r) = \max\{|I^+(x, r)/r - \pi|, |I^-(x, r)/r - \pi|\}$. This answered a question of Carleson explicitly left open in \cite{BJ94}.

The quantity $\e$ is, of course, purely geometric, but we would like to suggest a spectral reinterpretation of it which exposes certain connections to Theorem~\ref{thm: main rectifiability} as well as to higher-dimensional versions of Carleson's problem. A basic computation (see \cite{AKN1}) shows that
\[
	(\log J_0)'(r) \geq \frac{1}{r}[\alpha_u(r) + \alpha_v(r) - 2] \geq \frac{1}{r}\left[|\alpha_u(r) - 1|^2 + |\alpha_v(r) - 1|^2\right],
\]
where $\alpha_u$ represents the homogeneity of the unique nontrivial nonnegative homogeneous function on a cone $G$ with cross-section $G\cap \partial B_r(0) =  \{ u > 0 \}\cap \partial B_r(0)$ and which vanishes on the boundary of $G$, the ``characteristic constant.'' This number can be computed directly from the first Dirichlet eigenvalue of $\{u > 0\} \cap \partial B_r(0)$, which specifically in 2D depends only on the numbers $I^\pm(0, r)$. In particular, we have that
\[
	\frac{1}{r}\left[|\alpha_u(r) - 1|^2 + |\alpha_v(r) - 1|^2\right] \geq \frac{c}{r} \e^2(0, r).
\]
%if $\{u > 0\} = \W$ and $\partial\{v > 0\} = \partial \W$. 
Now, $(\log J_x)'$ is integrable in $r$ if $J_x(0^+) >0$, meaning $(\log J)'$ is a natural (and stronger) square function. If $\partial \W$ is a Jordan curve (or under some other assumptions, still in 2D), then the results of \cite{JTV21} and the argument sketched above imply Theorem \ref{thm: main rectifiability}.

The main interest of Theorem \ref{thm: main rectifiability}, though, is that it holds unconditionally and in any dimension, giving a rectifiability criterion which is analytic in nature. It is clear from the heuristic presented above that $J_x(0^+) > 0$ is a stronger property than the square-summability of $\e$: indeed, a more careful analysis of $(\log J)'$ shows that it provides some mild indirect control over how $\alpha_u(r)$ changes as a function of $r$, something that $\e$ notoriously does not. On the other hand, this suggests that of the many possible generalizations of $\e$ to higher dimensions, a particularly compelling one is the ``spectral'' square function:
\[
	\lambda^2(x, r) = |\lambda_1(\W \cap \partial B_r(x))/r^2 - (n - 1)|^2 + |\lambda_1(\partial B_r(x)\setminus \W)/r^2 - (n - 1)|^2,
\]
where $\lambda_1$ stands for the first Dirichlet eigenvalue of a domain on a sphere ($n-1$ is $\lambda_1$ of a half unit sphere).
\begin{problem}
	Let $\W \subset \R^n$ be a connected open set. Under what minimal assumptions on $\partial \W$ does
	\[
		\left\{x \in \partial \W\ : \ \int_0^1 \frac{\lambda^2(x, r)}{r}dr < \infty \right\}
	\]
	coincide with the rectifiable portion of $\partial \W$, up to sets of $\cH^{n-1}$ measure 0?
\end{problem}

\subsection{Applications to free boundary problems}\label{ssec: FBP}

In this section we present informally an example of how one might apply Theorem \ref{thm: main rectifiability} to gain information about a free boundary problem. We will consider a broad class of  vector-valued Bernoulli problems in \eqref{eqn: FBP} below. Problems like this derive from eigenvalue optimization and segregation models, for instance, and have been studied in the literature only very recently. The theory is still incomplete and progress so far has been limited to the restrictive setting of the variational formulation. Here we apply the main results of this paper to analyze part of the free boundary without resorting to variational techniques or any special structure of the free boundary condition.

Suppose a domain $\W$ and a collection $\{ u_i\}_{i=1}^N$ of continuous functions $u_i : \bar{\Omega} \rightarrow \R$ satisfy the overdetermined system 
\begin{equation}
	\label{eqn: FBP}
\begin{cases}
	\Delta u_i(x) = 0 & x \in \W \cap B_1\\
	u_i(x) = 0 & x \in \partial \W \cap B_1 \\
	F(\{|(u_i)_\nu(x)|\}) = 1 & x \in \partial^*\W \cap B_1\\
	F(\{|\nabla u_i^l|\}) = F(\{|\nabla u_i^r|\}) \geq 1 & x \in \partial^{c}\W \cap B_1\\
\end{cases}
\end{equation}
Here $\partial^*\W$ represents the reduced boundary of $\W$, while $\partial^{c}\W$ is the set of ``cusp points'' in $\partial \W$ with blow-up limits $\R^n \setminus H$ for a hyperplane $H$, i.e. points where $\partial \W$ has tangents but with $\W$ on either side of those tangents; $\nabla u_i^r, \nabla u_i^l$ represent the derivative from either side.

In typical examples, the  function $F:\R^N \to \R$ in \eqref{eqn: FBP} will be strictly increasing in each parameter and smooth, however the specific form is not important here. 
The only well-studied example is the variational one, $F(\xi) = \sum_i \mu_i \xi_i^2$ with $\mu_i > 0$, which permits viewing this problem as the stationarity condition for a certain energy. 
%The only well-studied example is $F(\xi) = \sum_i \mu_i \xi_i^2$ with $\mu_i > 0$ in the context of eigenvalue optimization problems. Here the variational structure of $F$ permits this problem to be viewed as the stationarity condition for a certain energy, allowing for the use of more general tools like the Weiss monotonicity formula that are not available for more general $F$.
 We are deliberately vague about the sense in which the last two conditions of \eqref{eqn: FBP} hold, as the point of the method here is \emph{that they are not used directly}.

For $\Omega$ and $u_1, \dots, u_N$ satisfying \eqref{eqn: FBP} (extended by zero on $B_1\setminus\Omega$), let $v$ be any rational linear combination of the $u_1,\dots , u_N$, and apply the ACF formula to $v_+$ and $v_-$; set
\begin{equation}
	\label{eqn: G}
	G = \bigcup_{v} \{x \in \partial \W \cap B_{1/2} : J_x(v_+, v_-, 0^+) > 0 \}.
\end{equation}
As a consequence of Theorems~\ref{thm: main rectifiability} and \ref{thm: uniqueness of blowups}, we have the following.
\begin{theorem}\label{thm: fbp}
	Consider a domain $\Omega$ and a collection of functions $\{ u_i\}_{i=1}^N$ satisfying \eqref{eqn: FBP}. The set $G$ is $\cH^{n-1}$-rectifiable. Moreover, for $\cH^{n-1}$-a.e. $x \in G$ and for each $i=1,\dots , N$, the blow-ups of $u_i$ converge to a unique limiting function of the form 
	$$\alpha_i (x \cdot \nu)_+ + \beta_i (x \cdot \nu)_-$$
	 with $\alpha_i \geq 0$ and $\beta_i \leq 0$.
\end{theorem}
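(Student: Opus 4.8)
The plan is to deduce both assertions from Theorems~\ref{thm: main rectifiability} and~\ref{thm: uniqueness of blowups}, applied to the ACF pairs $(v_+,v_-)$ coming from rational linear combinations $v$ of the $u_i$, and then to transfer the conclusions to the individual $u_i$ by a perturbation argument. Throughout I would extend the $u_i$ by zero off $\Omega$ and use freely that every hypothesis and conclusion here is local and scale invariant, so one may recenter and rescale. For any $v=\sum_i q_iu_i$ with $q_i\in\mathbb{Q}$, the function $v$ is continuous, harmonic in $\Omega\cap B_1$, and vanishes identically off $\Omega$; hence $v_+$ and $v_-$ are nonnegative, continuous, harmonic (in particular subharmonic) where positive, and $v_+v_-\equiv 0$, so $(v_+,v_-)$ satisfies \eqref{eqn: ACF setup}. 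Theorem~\ref{thm: main rectifiability} then gives that $\str(v_+,v_-)\cap B_{1/2}$ is $\cH^{n-1}$-rectifiable, and since
\[
	G=\bigcup_k\bigl(\str(v_+^{(k)},v_-^{(k)})\cap\partial\Omega\cap B_{1/2}\bigr)
\]
over an enumeration $v^{(1)},v^{(2)},\dots$ of the rational combinations, $G$ is $\cH^{n-1}$-rectifiable as a countable union of (subsets of) $\cH^{n-1}$-rectifiable sets.

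For the blow-up statement, I would first fix, once and for all, a null set $E\subset G$ outside of which: (a) $G$ has a unique approximate tangent plane with unit normal $\nu^x$; and (b) for every $k$ with $x\in\str(v_+^{(k)},v_-^{(k)})$, Theorem~\ref{thm: uniqueness of blowups} holds at $x$ for the pair $(v_+^{(k)},v_-^{(k)})$, so $v^{(k),x,r}=v_+^{(k),x,r}-v_-^{(k),x,r}$ converges strongly in $W^{1,2}_{loc}$ to $a^{k,x}(z\cdot\nu^x)_+-b^{k,x}(z\cdot\nu^x)_-$ with $a^{k,x},b^{k,x}>0$, the interface $(\nu^x)^\perp$ of this blow-up being the approximate tangent plane of $\str(v_+^{(k)},v_-^{(k)})$ and hence, by standard facts about tangents of rectifiable subsets, agreeing $\cH^{n-1}$-a.e. with that of $G$. (Only countably many $k$ occur, so the exceptional set stays null.) I then fix $x\in G\setminus E$; after relabeling, $x\in\str(v_+^{(1)},v_-^{(1)})$, and I orient $\nu^x$ so that $L_1:=a_1(z\cdot\nu^x)_+-b_1(z\cdot\nu^x)_-$ with $a_1,b_1>0$ is the blow-up of $v^{(1)}$ at $x$.

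Now I fix $i\in\{1,\dots,N\}$ and, for $t\in\mathbb{Q}$, set $w_t:=v^{(1)}+tu_i$, again a rational combination. The central claim is that $J_x((w_t)_+,(w_t)_-,0^+)>0$ for every rational $t$ with $|t|$ below a threshold $\delta(x,i)>0$. To prove it I would invoke the a priori Lipschitz bound on $u_i$ (available from the free boundary conditions) to obtain, along a subsequence $r_j\to 0$, a locally uniform limit $u_i^{x,r_j}\to u_i^0$ with $u_i^0$ Lipschitz and $u_i^0(0)=0$, so that $w_t^{x,r_j}\to L_1+tu_i^0$; since $L_1$ grows linearly (at rate $a_1$ on $\{z\cdot\nu^x>0\}$ and $b_1$ on the other side) while $|tu_i^0(z)|\le |t|\,\mathrm{Lip}(u_i)\,|z|$, for $|t|<\tfrac12\min\{a_1,b_1\}/\mathrm{Lip}(u_i)$ the limit $L_1+tu_i^0$ is strictly positive on an open cone about $+\nu^x$ and strictly negative on an open cone about $-\nu^x$. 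Hence both $(L_1+tu_i^0)_\pm$ are nonzero, and by weak lower semicontinuity of the (convex) weighted Dirichlet integrals together with monotonicity of $r\mapsto J_x$, the value $J_x((w_t)_+,(w_t)_-,0^+)=\lim_j J_x((w_t)_+,(w_t)_-,r_j)$ is bounded below by a positive constant, proving the claim. Consequently $x\in\str((w_t)_+,(w_t)_-)\cap\partial\Omega\cap B_{1/2}\subset G$, and since $x\notin E$ and $w_t$ appears in the enumeration, $w_t$ has a unique blow-up $a^{w_t}(z\cdot\nu^x)_+-b^{w_t}(z\cdot\nu^x)_-$, over the same hyperplane $(\nu^x)^\perp$. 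Writing $u_i=\tfrac1t(w_t-v^{(1)})$ and rescaling gives $u_i^{x,r}=\tfrac1t(w_t^{x,r}-v^{(1),x,r})$, which therefore converges in $W^{1,2}_{loc}$ as $r\to 0$ — with a limit independent of the sequence, since those of $w_t$ and $v^{(1)}$ are — to a function of the form $\alpha_i(z\cdot\nu^x)_++\beta_i(z\cdot\nu^x)_-$. This is the asserted uniqueness and form with $\nu=\nu^x$; the normalization $\alpha_i\ge 0\ge\beta_i$ is then read off by tracking signs in this limiting configuration (and is automatic when the $u_i$ are a priori sign definite).

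The step I expect to be the main obstacle is the central claim: promoting ``$x\in G$ witnessed by a single combination $v^{(1)}$'' into control over every $u_i$ by showing that small perturbations $v^{(1)}+tu_i$ are still detected by the ACF formula at $x$. It requires both enough compactness for the rescalings $u_i^{x,r}$ to produce a Lipschitz blow-up vanishing at the origin — so that adding a small multiple of $u_i$ cannot cancel the sign change carried by the blow-up of $v^{(1)}$ — and the bookkeeping that the exceptional null set $E$ can be fixed before the $x$- and $i$-dependent smallness threshold on $t$, which is legitimate precisely because there are only countably many rational combinations. A secondary technical point is matching the approximate tangent planes of the various sub-pieces $\str(v_+^{(k)},v_-^{(k)})$ with that of $G$, so that all the blow-ups in play live over one common hyperplane.
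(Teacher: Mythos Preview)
Your proposal follows essentially the same route as the paper's own proof: rectifiability of $G$ as a countable union via Theorem~\ref{thm: main rectifiability}, a single null set fixed in advance using the countability of rational combinations together with Theorem~\ref{thm: uniqueness of blowups}, and then the perturbation step---showing $J_x((v+\epsilon u_i)_+,(v+\epsilon u_i)_-,0^+)>0$ for small rational $\epsilon$---to transfer uniqueness from $v$ to each $u_i$. The paper's proof is extremely terse on this last point, asserting the perturbation claim without argument; your version supplies a justification via the a priori Lipschitz bound on $u_i$ and a compactness/lower-semicontinuity argument, which is a correct and natural way to fill the gap. One remark: the paper notes just after the theorem that the result actually holds assuming only the first two conditions of \eqref{eqn: FBP}, i.e.\ without the free boundary conditions and hence without a Lipschitz bound in hand. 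Your argument as written leans on Lipschitz continuity, so it proves the theorem as stated but not this strengthened version; the paper does not spell out how the perturbation step goes through in that generality either. Your care about the sign normalization $\alpha_i\ge 0\ge\beta_i$ is also well placed---the paper's own proof only concludes $\alpha_i,\beta_i\in\R$.
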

In fact, Theorem~\ref{thm: fbp} holds for functions $\{u_i\}_{i=1}^N$  satisfying only the first two conditions in \eqref{eqn: FBP}.
\begin{proof}
	The $\cH^{n-1}$-rectifiability of $G$ follows directly from Theorem~\ref{thm: main rectifiability} since the countable union of rectifiable sets is rectifiable. For each $x\in G$, there is a $v$ to be a rational linear combination for which $J_x(v_+, v_-, 0^+) > 0$; by Theorem~\ref{thm: uniqueness of blowups}, outside of an $\cH^{n-1}$-null set in $G$,
	the blow-ups of every such $v$ converge to a unique limiting function (depending on $v$) of the form $\alpha (x \cdot \nu)_+ + \beta (x \cdot \nu)_-$ with $ \alpha > 0$, $\beta < 0$. 
	 For each $i =1,\dots, N$ and  for $\e$ small enough (and rational), $J_x((v + \e u_i)_+, (v + \e u_i)_-, 0^+) > 0$. This implies that the blow-ups of each $u_i$ are also unique functions of the same form, except with $\alpha, \beta \in \R$ possibly $0$.
	\end{proof}

Let us explain the relevance of the set $G$ and of Theorem~\ref{thm: fbp}. For a solution of \eqref{eqn: FBP}, let $E$ be the set of points in $\partial \Omega$  where $\Omega$ has Lebesgue density strictly less than one, and let $D = \partial \Omega \setminus (E\cup G)$. In this way, we partition the free boundary $\partial \Omega$ into disjoint sets $D, G$, and $E$. On $E$, one can use blow-up and perturbative methods to classify points as in the scalar case: $E$  is composed of the reduced boundary, which is smooth and relatively open, and a lower-dimensional set of singular points. While this problem is unstudied in the generality presented here, \cite{KL1, KL2,MTV17,CSY} deal with particular cases and could be adapted further.
Next,  Theorem~\ref{thm: fbp} tells us that $G$ is $\cH^{n-1}$-rectifiable. For  a sufficiently stable solution (as described more precisely below), the set $D$ is expected to be empty,  so in particular the entirety of $\partial \Omega = E\cup G$ is $\cH^{n-1}$-rectifiable. On the other hand, for a solution where $D$ is nonempty, then at each $x \in D$ the solution must be degenerate in one of the ways described below.

For a reasonable choice of $F$, one can expect to show that for a solution of \eqref{eqn: FBP}, each $u_i$ is Lipschitz continuous. This property is generally straightforward to establish for even the weakest possible interpretation of this problem (see \cite{CaffSalsa}, for instance). 
Sufficiently ``stable'' solutions are expected to satisfy 
the nondegeneracy condition
\begin{equation} \label{e:nondegenerate}
	\max_{i=1,\dots, N}\sup_{B_r(x)} |u_i| \geq c r \qquad \forall x \in \partial \W, r < r_0;
\end{equation}
for instance, this is expected for Perron solutions, or for minimizing solutions in a variational setting.
Finally, the following secondary nondegeneracy property is not limited by stability but is expected to be true for any sufficiently well-behaved solution: for each $i = 1,\dots, N$, $x \in \partial \Omega$, and $r <r_0$,  
\begin{equation} \label{e:notmodxn}
	\sup_{B_r(x)} u_i^\pm \leq C\left[ \sup_{B_{2r}(x)} u_i^\mp + r \sqrt{\frac{|B_{2r}(x)\setminus \Omega|}{|B_{2r}|}}\right],
\end{equation}
i.e. at points $x$ with Lebesgue density close to $1$, $u^\pm$ must have comparable size.

%The relevance of the set $G$ and of Theorem~\ref{thm: fbp} is that it gives a way to establish $\cH^{n-1}$-rectifiability and uniqueness of blowups for precisely the set of (two phase? cusp? what's the right language?) free boundary points for which ....other methods fail. 
\begin{theorem}
Consider a domain $\Omega$ and a collection of functions $\{ u_i\}_{i=1}^N$ satisfying \eqref{eqn: FBP}. 
	If \eqref{e:nondegenerate} and \eqref{e:notmodxn} hold, then $D$ is empty.
\end{theorem}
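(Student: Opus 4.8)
\emph{Overall strategy.}
The plan is to assume for contradiction that some point $x\in D$ exists, to extract from the fact $J_x(u_i^+,u_i^-,0^+)=0$ that near $x$ one of the two phases $u_i^\pm$ must flatten out along a suitable sequence of scales (a soft version of the equality case of the ACF formula), and then to use the secondary nondegeneracy \eqref{e:notmodxn} to propagate this flatness to \emph{both} phases of \emph{every} $u_i$, which will contradict \eqref{e:nondegenerate}. So suppose $x\in D$; then $x\in\partial\Omega\cap B_{1/2}$, and unwinding the definitions: (i) since $x\notin E$ there are $\rho_j\downarrow 0$ along which $|B_{\rho_j}(x)\setminus\Omega|/|B_{\rho_j}|\to 0$; set $\delta_j:=(|B_{\rho_j}(x)\setminus\Omega|/|B_{\rho_j}|)^{1/2}\to 0$; (ii) since $x\notin G$ and each $u_i$ is (trivially) a rational linear combination of $u_1,\dots,u_N$, we have $J_x(u_i^+,u_i^-,0^+)=0$ for every $i$. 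Each pair $(u_i^+,u_i^-)$ satisfies \eqref{eqn: ACF setup} (positive and negative parts of a harmonic function are subharmonic with disjoint supports), so the ACF monotonicity formula applies to it. Throughout I would use the standard linear growth bound $\sup_{B_r(x)}|u_i|\le\Lambda r$ at $\partial\Omega$ (available as in the discussion above), which provides the compactness needed below.

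\emph{Step 1: degeneration of the rescalings.}
Writing $u_i^{\pm,x,r}(z)=u_i^\pm(x+rz)/r$ and
\[
	P_i(r)=\int_{B_5}\frac{|\na u_i^{+,x,r}|^2}{|z|^{n-2}}\,dz,\qquad Q_i(r)=\int_{B_5}\frac{|\na u_i^{-,x,r}|^2}{|z|^{n-2}}\,dz,
\]
a change of variables in \eqref{eqn: ACF formula} gives the exact identity $P_i(r)\,Q_i(r)=5^4\,J_x(u_i^+,u_i^-,5r)$. Since the ACF formula is monotone and vanishes at $0^+$, I get $P_i(\rho_j)Q_i(\rho_j)\to 0$, hence $\min(P_i(\rho_j),Q_i(\rho_j))\to 0$, for each $i$. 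Passing to a common subsequence of the $\rho_j$ (there are finitely many $i$), for each $i$ one of the two factors tends to $0$; say $P_i(\rho_j)\to 0$ (the other case is identical, using the other direction of the symmetric inequality \eqref{e:notmodxn}). As $|z|^{2-n}$ is bounded below on $B_5$, this forces $\na u_i^{+,x,\rho_j}\to 0$ in $L^2(B_4)$, so by the Poincar\'e inequality and the linear growth bound, $u_i^{+,x,\rho_j}$ converges, along a further subsequence, in $L^2(B_4)$ to a constant $c_i^*\in[0,\infty)$.

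\emph{Step 2: closing up with \eqref{e:notmodxn} and \eqref{e:nondegenerate}.}
Fix $i$. If $c_i^*>0$, then from $u_i^{+,x,\rho_j}u_i^{-,x,\rho_j}\equiv 0$ and convergence in measure I get $u_i^{-,x,\rho_j}\to 0$ in $L^1(B_4)$, so by the sub-mean value inequality for the subharmonic function $u_i^{-,x,\rho_j}$, $\sup_{B_{\rho_j}(x)}u_i^-=o(\rho_j)$; then \eqref{e:notmodxn} applied at radius $\rho_j/2$ --- whose defect term is exactly $\tfrac{\rho_j}{2}\delta_j=o(\rho_j)$ --- gives $\sup_{B_{\rho_j/2}(x)}u_i^+=o(\rho_j)$, contradicting $\sup_{B_{\rho_j/2}(x)}u_i^+=\rho_j\sup_{B_{1/2}}u_i^{+,x,\rho_j}\ge\rho_j\tfrac{1}{|B_{1/2}|}\int_{B_{1/2}}u_i^{+,x,\rho_j}\to c_i^*\rho_j$. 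Hence $c_i^*=0$ for every $i$; but then $u_i^{+,x,\rho_j}\to 0$ in $L^2(B_4)$, so (subharmonicity again) $\sup_{B_{\rho_j}(x)}u_i^+=o(\rho_j)$, and \eqref{e:notmodxn} at radius $\rho_j/2$ forces $\sup_{B_{\rho_j/2}(x)}u_i^-=o(\rho_j)$ too. Therefore $\sup_{B_{\rho_j/2}(x)}|u_i|=o(\rho_j)$ for every $i$, i.e. $\max_i\sup_{B_{\rho_j/2}(x)}|u_i|=o(\rho_j)$, contradicting \eqref{e:nondegenerate} at radius $\rho_j/2$. This shows $D=\emptyset$.

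\emph{The main obstacle.}
I expect the crux to be Step 2 in the case $c_i^*>0$. It is tempting to argue that $J_x(u_i^+,u_i^-,0^+)=0$ makes the whole blow-up of $u_i$ degenerate, but it only kills one of the two weighted Dirichlet integrals; the surviving phase can legitimately converge to a nonzero constant (picture $u_i$ looking like a positive constant near a density-one boundary point, e.g. at the tip of a thin excluded tentacle), and this is a genuine obstruction that cannot be excluded from \eqref{eqn: ACF setup} or \eqref{e:nondegenerate} alone. It is exactly the secondary nondegeneracy \eqref{e:notmodxn} that forbids it and, at the same time, converts flatness of one phase into flatness of both. A lesser technical point is the need to run the density hypothesis, \eqref{e:nondegenerate}, and the energy bounds along a single sequence of scales --- handled by a finite diagonal extraction --- and to apply \eqref{e:notmodxn} at radius $\rho_j/2$ precisely so that the scale appearing in its defect term is one along which the density of $\Omega$ tends to $1$.
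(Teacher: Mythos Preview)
The paper states this theorem in the applications section but does not supply a proof; it appears as part of an informal discussion illustrating how Theorems~\ref{thm: main rectifiability} and~\ref{thm: uniqueness of blowups} bear on free boundary problems. Your argument is therefore the only proof on the table, and it is correct.

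Your approach is the natural one: from $x\notin G$ you use the ACF formula for $(u_i^+,u_i^-)$ to force one of the two weighted Dirichlet energies of the rescalings to vanish along a sequence of scales, and the Lipschitz growth bound then makes the corresponding phase converge to a nonnegative constant. You correctly isolate the delicate case $c_i^*>0$---where nothing so far forbids the surviving phase from looking like a positive constant near a density-one boundary point---and dispatch it with the secondary nondegeneracy \eqref{e:notmodxn}; a second application of \eqref{e:notmodxn} then kills the other phase, contradicting \eqref{e:nondegenerate}. The bookkeeping is handled properly: the finite diagonal extraction across $i$, applying \eqref{e:notmodxn} at radius $\rho_j/2$ so that its defect term is controlled by the density at scale $\rho_j$, and the passage from $L^1$ smallness to $L^\infty$ smallness on a smaller ball via the sub-mean-value inequality for the subharmonic functions $u_i^\pm$.

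One minor caveat: you invoke the linear growth bound $\sup_{B_r(x)}|u_i|\le \Lambda r$, which is not listed among the formal hypotheses. The paper's surrounding discussion treats Lipschitz continuity of the $u_i$ as an expected consequence of \eqref{eqn: FBP} for reasonable $F$, so this is a fair reading of the theorem's intent, and you flag the assumption explicitly.
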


%Up to switching the signs of some of the $u_i$, we must have that
%\[
%	\lim_{r \searrow 0}\frac{1}{r}\sup_{B_r(x), i} u_i^- = 0.
%\]
%In other words, near $x$, each $u_1, \dots, u_N$ is asymptotically ``one-phase'' and they all have the same sign to first order. If \eqref{e:nondegenerate} and \eqref{e:notmodxn} hold, this implies that the Lebesgue density of $\W$ at $x$ is strictly less than $1$. 
The set $G$ is not well understood, even in the case of minimizers to the variational form of this problem. In the variational case $F(\{\xi_i\} ) = \sum_i \mu_i \xi_i^2$,  the set $G$ is shown to be regular in the scalar case $N = 1$ \cite{DSV} and in two dimensions \cite{SV}, or rectifiable in arbitrary dimensions \cite{DESV}. Theorem~\ref{thm: fbp} gives an alternative and rather different proof of the main result of \cite{DESV}, and additionally establishes the rectifiability of $G$ in the broader context of non-variational problems.

\medskip

Let us give a second example of how our main theorems above can be applied to analyze free boundaries. Consider a two-phase parabolic equation of Hele-Shaw type: a function $u : \W \times (0, T] \rightarrow \R$ solves
\[
	\begin{cases}
		\Delta_x u(x, t) = 0 & u(x, t) \neq 0\\
		\partial_t u(x, t) = |\nabla u_+|^2 - |\nabla u_-|^2 & u(x, t) = 0\\
		u(x, t) = \phi(x) & x \in \partial \W.
	\end{cases}
\]
Two-phase flows of Bernoulli/Stefan type have been studied extensively in the context of heat transfer, porous media, fluid dynamics, and other models (\cite{CaffSalsa}). In general, the results available are either local regularity theorems under strict starting assumptions on the flow to reduce to a perturbative starting point, or global theorems exploiting monotonicity of $u$ or $\{u > 0\}$ in $t$ by imposing special boundary or initial conditions. A major difficulty in comparison to stationary problems is the lack of geometric measure theoretic results about $\{u = 0\}$.

Here Theorem \ref{thm: main rectifiability} can be applied directly to give that the set of points where $J_x(u_+(\cdot, t), u_-(\cdot, t), 0^+) > 0$ is rectifiable (on each time slice, independently). This suggests that at points of this type, one has a reasonable starting configuration for perturbative results from a linearized two-phase configuration. At points where $J_x(u_+(\cdot, t), u_-(\cdot, t), 0^+) = 0$, one instead hopes for perturbative results from a one-phase configuration (which is better understood, \cite{cjk07,cjk09,cgs19}), or some other more degenerate behavior.

	\section{Preliminaries} 
	This section contains preliminary results that will be used in the paper.
Section~\ref{ssec: notation} provides the basic notation and some initial scaling observations. In section~\ref{ssec: NV}, we recall two Reifenberg-type theorems of Naber and Valtorta. 
In section~\ref{ssec: QACF}, we recall our main result of \cite{AKN1} and prove some slight variants of it that will be applied in this paper.
Finally, section~\ref{ssec: cptness} contains an important lemma that allows us to deduce convergence of the ACF monotonicity formulas of $L^2$-convergent sequences of functions. 

	\subsection{Notation and Basics}\label{ssec: notation}
We use the notation $B_r(x)$ to denote a ball of radius $r>0$ centered at $x\in \R^n$ and $B_r$ to denote $B_r(0)$, and let $\omega_{n}$ denote the volume of the $n$-dimensional Euclidean unit ball. 	Given a set $E \subset \R^n$, let $d(x,E) = \inf_{y \in E} d(x,y)$ and $B_r(E) = \cup_{x \in E} B_r(x)$.

We recall that a set $S\subset \R^n$ is  $\mathcal{H}^{n-1}$-rectifiable if there exist countably many Lipschitz maps $f_j: \R^{n-1} \to \R^n$ such that $\mathcal{H}^{n-1} (S \setminus \bigcup_j f_j(\R^{n-1}) ) =0$, and caution the reader that this definition is sometimes called  {\it countable}  $\mathcal{H}^{n-1}$-rectifiability.

It is worth noting how our quantities behave under rescalings.	Suppose $u,v$ are as in \eqref{eqn: ACF setup}. For $r \in (0,4)$, let $u^{x,r}(z) = u(x+ rz)/r$ and $v^{x,r}(x) = v( x+rz)/r$. Then  
	\begin{align*}
		J_x(\rho ; u, v ) = J_0\left( {\rho}/{r}  ; u^{x,r}, v^{x,r} \right), & \qquad \qquad
		\Gamma^*(u , v) \cap B_r(x) = \Gamma^* \left(u^{x,r}, v^{x,r}\right) \cap B_1(0),\\
		\str_\e(u , v)\cap B_r(x) = \str_{\e} (u^{x,r} , v^{x,r}) \cap B_1(0), & \qquad\qquad
		\str_{\e, \rho} (u, v) \cap B_r(x)  = \str_{\e, \rho/r} (u^{x,r}, v^{x,r}) \cap B_1(0)\,.
	\end{align*}

\subsection{Reifenberg-Type Theorems}\label{ssec: NV}
We recall two important theorems of Naber and Valtorta, the discrete Reifenberg theorem and the rectifiable Reifenberg theorem. 
 Given a Borel measure $\mu$, a point $x \in B_1(0)$, and a scale $r \in (0,1)$, the {\it $(n-1)$-dimensional $L^2$ Jones' beta number} $\beta_\mu (x,r)$ with respect to $\mu$ is defined by
\begin{equation}
\label{eqn: beta numbers} 
\beta_\mu (x,r)^2 =  \inf_L \int_{B_r(x)} \frac{d(y, L)^2}{r^2} \, \frac{d\mu(y)}{r^{n-1}}.
\end{equation}
Here the infimum is taken over all affine hyperplanes $L$ in $\R^n$.  This quantity measures, in a scale invariant $L^2$ sense, how far the support of $\mu$ is to being contained in an affine hyperplane.
Following Jones' use of an $L^\infty$ analogue of \eqref{eqn: beta numbers}, the $L^p$ beta numbers were first introduced by David and Semmes in \cite{DavidSemmesBeta} and have since been used, for instance, in \cite{DavidToro, AzzamTolsa} as well as \cite{NVmain}.

We state the following two Reifenberg-type theorems only in the $(n-1)$-dimensional case in which we will apply them. We refer the reader to Theorems 3.4 and 3.3 in \cite{NVmain} respectively for the statements in greater generality,  as well as to the paper \cite{ENV} for further extensions.

\begin{theorem}[Discrete Reifenberg Theorem]\label{thm: discrete reifenberg}
Fix $n\geq 2.$ 
There are dimensional constants $\delta_0 = \delta_0(n)>0$ and $C_0=C_0(n) >0$ such that the following holds.
	Let $\{B_{r_x}(x)\}$ be  a collection of disjoint balls with $x \in B_1(0)$ and $r_x \in (0,1]$, and let $\mu= \sum_x r_x^{n-1} \delta_x$ be the associated $(n-1)$-dimensional packing measure. If 
	\[
	\int_0^{2r} \int_{B_r(x)} \beta_\mu(z,s)^2 \, d \mu(z) \frac{ds}{s} \leq \delta_0\, r^{n-1}
	\]
	for all $x \in B_1(0)$ and all $r \in (0,1]$, then
	\[
 	\sum_x r_x^{n-1} \leq C_0.
	\]
\end{theorem}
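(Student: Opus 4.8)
The statement is the Naber-Valtorta discrete Reifenberg theorem, namely \cite[Theorem 3.4]{NVmain} specialized to $(n-1)$-dimensional packing measures, so the honest plan is simply to quote it; but let me describe how I would prove it from scratch, following the Naber-Valtorta scheme (as laid out, e.g., in \cite{EE19}).

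The plan is to extract from $\mu$ a $(n-1)$-rectifiable ``good'' set $G\subset\R^n$ with $\mathcal H^{n-1}(G)\le C(n)$ that carries a definite fraction of $\mu\mres B_1(0)$ and on which $\mu\le C(n)\,\mathcal H^{n-1}\mres G$; I would then rerun the construction on the (controlled) leftover mass and sum a geometric series to obtain $\sum_x r_x^{n-1}=\mu(B_1(0))\le C_0(n)$. To build $G$, the idea is a Corona-type stopping-time decomposition over the dyadic subballs of $B_1(0)$: for each dyadic ball $B$ one picks an affine hyperplane $L_B$ nearly realizing the infimum defining $\beta_\mu$ in \eqref{eqn: beta numbers} on $B$, and stops refining inside $B$ once either the accumulated rotation of the planes $L_{B'}$ over the dyadic ancestors $B'\supseteq B$ exceeds a small threshold, or $\mu(B)$ ceases to scale like $r^{n-1}$ at the radius $r$ of $B$. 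Away from the stopped region the planes $L_B$ move only slowly between consecutive scales, so I would glue them into a single Lipschitz graph $\Sigma$; the quantitative heart of the matter is that the Lipschitz constant of $\Sigma$ --- equivalently, the bi-Lipschitz constant of a parametrization $\sigma\colon A\subset\R^{n-1}\to\Sigma$ --- is controlled by $\sup_{y,t}\,t^{1-n}\int_0^{2t}\int_{B_t(y)}\beta_\mu(z,s)^2\,d\mu(z)\,\frac{ds}{s}$, so taking $\delta_0(n)$ small forces $\sigma$ to be almost an isometry. Comparing the push-forward $\sigma_{\#}(\mathcal L^{n-1}\mres A)$ with $\mu$ on the good region produces $G$ with the required properties, and square-summability of the $\beta_\mu$ encoded in the hypothesis makes the recursion on the stopped balls close up with a fixed geometric loss.

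The hard part will be the ``tilting'' estimate: bounding the rotation of the best-fit planes $L_B$ from one scale to the next \emph{only} in terms of the $L^2$ flatness numbers $\beta_\mu$, and doing so without any a priori \emph{lower} Ahlfors bound on $\mu$. This is exactly where the discreteness and pairwise disjointness of the family $\{B_{r_x}(x)\}$ must be used: above the atom scales disjointness forces enough spreading of mass to compare best planes, while at the atom scales the packing is automatically bounded. Since carrying this out in full is long and orthogonal to the new content of the present paper, in what follows I would (and do) simply invoke \cite[Theorem 3.4]{NVmain}.
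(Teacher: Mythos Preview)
Your proposal is correct and matches the paper's approach exactly: the paper does not prove this theorem at all but simply cites \cite[Theorem~3.4]{NVmain} (and \cite{ENV} for extensions), just as you ultimately do. Your additional sketch of the Naber--Valtorta argument is accurate background but goes beyond what the paper provides.
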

We will apply Theorem~\ref{thm: discrete reifenberg} in Section~\ref{sec: l2} to establish packing estimates.
Later, in Section~\ref{sec: rect}  we will apply the rectifiable Reifenberg theorem to prove that $\Gamma^*_\e$ is $\mathcal{H}^{n-1}$-rectifiable.
\begin{theorem}[Rectifiable Reifenberg Theorem]\label{thm: rect reif}
 Fix $n\geq 2$. There are constants $\delta_0=\delta_0(n)$ and $C_0 =C_0(n)$ such that the following holds. Let $S\subset \mathbb{R}^n$ be a set and let $\mu_S = \mathcal{H}^{n-1}\mres S.$ If
	\[
	\int_0^{2r} \int_{B_r(x)} \beta_\mu(z,s)^2 \, d \mu(z) \frac{ds}{s} \leq \delta_0\, r^{n-1}
	\]
	for all $x \in B_1(0)$ and all $r \in (0,1]$, then $S \cap B_1(0)$ is $\mathcal{H}^{n-1}$-rectifiable, and for each $x \in B_1(0)$ and $0 < r<1$, 
	\[
	\mu_S(B_r(x))\leq C_0 r^{n-1}.
	\]
	
\end{theorem}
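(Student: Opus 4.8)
The plan is to follow the Reifenberg-type strategy of \cite{NVmain}: the argument divides into (i) the upper Ahlfors bound $\mu_S(B_r(x))\le C_0 r^{n-1}$ and (ii) the $\mathcal{H}^{n-1}$-rectifiability of $S\cap B_1(0)$, and both hinge on a \emph{manifold construction}. The idea there is that at every dyadic scale $s=2^{-k}$ and every relevant point $y$ one records an affine hyperplane $L_{y,s}$ nearly achieving the infimum in \eqref{eqn: beta numbers}, and then glues these planes together with a Lipschitz partition of unity into a smooth hypersurface $\mathcal{M}_k$ approximating $\mathrm{supp}\,\mu_S$ at scale $s$. The hypothesis---smallness of the scale-summed squared beta numbers---is precisely what is needed to bound how far the optimal planes tilt from one scale to the next, and hence to pass to a limit.

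First I would establish the measure bound $\mu_S(B_r(x))\le C_0 r^{n-1}$ by induction on scale combined with the Discrete Reifenberg Theorem~\ref{thm: discrete reifenberg}. Fixing $B_r(x)$, one runs a stopping-time decomposition of $S\cap B_r(x)$ into balls: a ball $B_\rho(y)$ is called \emph{good} and refined further when the localized integral $\int_0^{2\rho}\int_{B_\rho(y)}\beta_{\mu_S}(z,s)^2\,d\mu_S(z)\,\frac{ds}{s}$ lies below a fixed threshold, and \emph{bad} otherwise. The hypothesis forces the bad balls at each generation to carry only a controlled fraction of the total $(n-1)$-content, while on a maximal disjoint subfamily $\{B_{\rho_i}(y_i)\}$ of good balls one applies Theorem~\ref{thm: discrete reifenberg} to the packing measure $\sum_i \rho_i^{n-1}\delta_{y_i}$; its beta-number hypothesis follows by comparing $\beta$ of this discrete measure with $\beta_{\mu_S}$, using the measure estimate \emph{at strictly larger scales}, which is exactly what the induction provides. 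Summing the resulting bounds $\sum_i\rho_i^{n-1}\lesssim r^{n-1}$ over generations gives a Minkowski-content estimate for $S\cap B_r(x)$, hence $\mu_S(B_r(x))\le C_0 r^{n-1}$.

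With upper regularity of $\mu_S$ in hand, the $L^2$ beta numbers genuinely control geometric flatness, and I would then run the manifold construction inside $B_1(0)$. The Dini-type bound on $\beta_{\mu_S}^2$ yields $\|\mathcal{M}_{k+1}-\mathcal{M}_k\|_{C^0(B)}\lesssim(\text{localized beta integral over }B)^{1/2}$ for each ball $B$, so the $\mathcal{M}_k$ converge uniformly to a Lipschitz hypersurface $\mathcal{M}_\infty$, realized as the graph of a Lipschitz function over the best plane $L_{x,1}$ with Lipschitz constant controlled by $\delta_0$. A covering argument decomposing $B_1(0)$ into regions on which the beta-number tail is small then shows that $\mu_S$-a.e. point of $S\cap B_1(0)$ lies on one of countably many such Lipschitz graphs; since each such graph is the image of a Lipschitz map $\mathbb{R}^{n-1}\to\mathbb{R}^n$, this is exactly $\mathcal{H}^{n-1}$-rectifiability.

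The hard part is the inductive covering in the second paragraph. The comparison between $\beta_{\mu_S}$ and the beta number of the discrete packing measure needs a lower mass bound for $\mu_S$ on the selected balls, which is not available a priori; this apparent circularity must be broken by organizing the induction so that the measure estimate is only ever invoked at scales strictly larger than the one under consideration, and by tuning the stopping rule so that bad balls stay measure-negligible while good balls remain packing-efficient. Making all the quantitative thresholds close simultaneously is the technical core of the Naber--Valtorta argument. A secondary difficulty is checking that the uniform limit of the $\mathcal{M}_k$ is genuinely Lipschitz rather than merely continuous, and that it captures $\mu_S$-almost every point of $S$---both of which again come down to the summed-square beta-number hypothesis.
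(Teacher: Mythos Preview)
The paper does not prove this theorem: Theorem~\ref{thm: rect reif} is stated in Section~\ref{ssec: NV} as a preliminary result quoted from \cite{NVmain} (``We refer the reader to Theorems 3.4 and 3.3 in \cite{NVmain} respectively for the statements in greater generality''), and no proof is given. Your outline is a reasonable high-level summary of the Naber--Valtorta argument itself, but there is nothing in the paper to compare it against; for the purposes of this paper the correct ``proof'' is simply a citation.
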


\subsection{Quantitative remainder for the ACF monotonicity formula}
\label{ssec: QACF}

In \cite{AKN1} we proved a sharp quantitative version of the ACF monotonicity formula, which will play an important role here. 
The estimate will be used at various different points throughout the paper; the optimal exponent in our quantitative estimate is crucial in the $L^2$ subspace approximation theorem, Theorem~\ref{t:l2est2}.
\begin{theorem} \label{t:ACF} Fix $n\geq2$. There is a dimensional constant $C>0$ such that the following holds.
Let $u, v :B_2(0)\to \R$ be nonnegative continuous functions that satisfy \eqref{eqn: ACF setup}. For any $\rho\in [0,1/2]$,
there exist $\aone, \atwo >0$ and a direction $\nu \in \mathbb{S}^{n-1}$ such that
   \[
  \int_{B_1 \setminus B_{\rho}} \left(u -\aone (x\cdot \nu)^+ \right)^2 + \left(v -\atwo (x\cdot \nu)^-\right)^2
  \leq C\log\left(\frac{J_0(1)}{J_0(\rho)}\right)  \int_{B_1} \left( u^2 +v^2\right)  .  
 \] 
 There is a dimensional constant $\kappa_0$ such that if $\log(J(1)/J(0^+))<\kappa_0^2,$ then $\aone, \atwo$, and $\nu$ can be chosen independently of $\rho.$
\end{theorem}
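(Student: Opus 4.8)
The plan is to revisit the proof of the ACF monotonicity formula, to retain every error term rather than discard it, to identify those errors with concrete geometric defects on the spheres $\partial B_r$, and then to integrate in $r$. First I would reprove the monotonicity computation keeping track of the remainder. Writing $J_0=A\cdot B$ with $A(r)=r^{-2}\int_{B_r}|\na u|^2|y|^{2-n}\,dy$ and passing to polar coordinates $y=s\theta$, differentiation in $r$ together with (i) the variational characterization of the first Dirichlet eigenvalue $\lambda_1(\omega_u(r))$ of the spherical domain $\omega_u(r):=\{u(r\,\cdot)>0\}\subset\mathbb S^{n-1}$, (ii) the splitting $|\na u|^2=|\partial_r u|^2+r^{-2}|\na_\theta u|^2$, (iii) a Cauchy--Schwarz inequality in the radial variable, and (iv) the subharmonicity of $u$ (used to replace the solid integral defining $A$ by a boundary integral on $\partial B_r$ up to a favorably signed term) yields, for a.e.\ $r\in(0,1]$,
\[
\frac{d}{dr}\log J_0(r)\ \ge\ \frac1r\Big[\big(\alpha_u(r)+\alpha_v(r)-2\big)+\mathrm D_u(r)+\mathrm D_v(r)\Big],
\]
where $\alpha_w(r)$ is the characteristic constant of $\omega_w(r)$ (so that a hemisphere has $\alpha=1$) and $\mathrm D_w(r)\ge 0$ quantifies, in a scale-invariant $L^2(\partial B_r)$ sense, the failure of $w$ to be a homogeneous function of degree $\alpha_w(r)$ whose restriction to $\partial B_r$ is proportional to the first Dirichlet eigenfunction of $\omega_w(r)$. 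The computation for subsolutions rather than solutions works because subharmonicity only improves the relevant inequality.

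The heart of the argument is the sphere-wise estimate: at each scale $r$ there are $\aone(r),\atwo(r)>0$ and $\nu(r)\in\mathbb S^{n-1}$ with
\[
\int_{\partial B_r}\big(u-\aone(r)(x\cdot\nu(r))^+\big)^2+\big(v-\atwo(r)(x\cdot\nu(r))^-\big)^2\ \le\ C\Big[\big(\alpha_u(r)+\alpha_v(r)-2\big)+\mathrm D_u(r)+\mathrm D_v(r)\Big]\int_{\partial B_r}(u^2+v^2),
\]
with a constant $C$ that does \emph{not} degrade the power of the deficit --- and this optimal power is exactly what the later $L^2$ subspace approximation theorem requires. The defects $\mathrm D_w$ already encode the homogeneity/eigenfunction part, so what remains is to show that the Friedland--Hayman deficit $\alpha_u(r)+\alpha_v(r)-2$ controls, with the sharp exponent, the closeness of the pair $(\omega_u(r),\omega_v(r))$ to a pair of complementary hemispheres. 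For this I would invoke the sharp quantitative Faber--Krahn inequality on $\mathbb S^{n-1}$: among spherical domains of given measure, geodesic balls uniquely minimize $\lambda_1$ with quadratic stability in the Fraenkel asymmetry, and the corresponding first eigenfunctions are close in $L^2$ with squared distance controlled linearly by the eigenvalue gap. Combining this with the one-dimensional strict convexity of $t\mapsto\lambda_1(\text{cap of measure }t)$ at $t=|\mathbb S^{n-1}|/2$ --- which forces $|\omega_u(r)|$ and $|\omega_v(r)|$ toward $|\mathbb S^{n-1}|/2$ at a quadratic rate and pins down complementarity --- gives the displayed sphere-wise estimate. Establishing the sharp Faber--Krahn inequality, via a selection principle reducing to nearly round competitors together with the regularity theory for Faber--Krahn minimizers on the sphere from \cite{AKN2}, is the main obstacle: a soft compactness argument would produce only a lossy power of the deficit, hence only $C[\log(J_0(1)/J_0(\rho))]^{1/2}$ in the theorem.

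Finally I would assemble the scales. Integrating the sphere-wise inequality in $dr$ over $(\rho,1)$, using Fubini to recognize the left-hand side as an integral over $B_1\setminus B_\rho$, comparing $\int_{\partial B_r}(u^2+v^2)$ with $r^{n+1}\int_{B_1}(u^2+v^2)$ via the approximate $1$-homogeneity of $u,v$, and bounding $\int_\rho^1\tfrac1r[(\alpha_u+\alpha_v-2)+\mathrm D_u+\mathrm D_v]\,dr\le\log(J_0(1)/J_0(\rho))$ from Step~1, reduces the theorem to replacing the scale-dependent $(\aone(r),\atwo(r),\nu(r))$ by fixed ones. The slopes cause no difficulty: $|(\log\aone)'(r)|^2\lesssim \mathrm D_u(r)/r$ bounds the oscillation of $\aone(r)$, so it can be frozen at $r=1$. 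The direction obeys $|\nu'(r)|^2\lesssim\tfrac1r[(\alpha_u+\alpha_v-2)+\mathrm D_u+\mathrm D_v]$, whence $|\nu(r)-\nu(1)|^2\lesssim|\log r|\,\log(J_0(1)/J_0(\rho))$ by Cauchy--Schwarz; since $\int_0^1|\log r|\,r^{n+1}\,dr<\infty$, this loss is absorbed into the right-hand side, so one may take $\nu=\nu(1)$. (At the scales --- few, by the deficit budget --- where $\alpha_u+\alpha_v-2$ is not small and no nearest direction is defined, the trivial bound is used, and its cost is again controlled by the deficit.) When $\log(J_0(1)/J_0(0^+))<\kappa_0^2$, every scale is of the controlled type, the family $\nu(r)$ varies slowly, and this choice is legitimate uniformly in $\rho$; without the smallness hypothesis the nearest direction may rotate without limit as $r\to0$ --- as in Example~\ref{ex: Spiral} --- which is why the $\rho$-independent choice is asserted only under that hypothesis.
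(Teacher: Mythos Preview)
This theorem is not proved in the present paper: it is quoted from the companion paper \cite{AKN1} and used as a black box (see the discussion preceding the statement in Section~\ref{ssec: QACF}). So there is no proof here to compare against.

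That said, your outline is consistent with what this paper says about the argument in \cite{AKN1}. The introduction explicitly identifies the decisive ingredient as ``a new type of sharp quantitative stability estimate for the Faber--Krahn inequality on the sphere, which in turn relies on delicate free boundary regularity results in \cite{AKN2},'' and Section~\ref{s:carleson} records the derivative lower bound $(\log J_0)'(r)\ge r^{-1}[\alpha_u(r)+\alpha_v(r)-2]$, so the architecture you describe --- keep remainder terms in the monotonicity computation, use sharp spherical Faber--Krahn stability to convert the Friedland--Hayman deficit into a quantitative hemisphere/eigenfunction closeness, integrate in $r$, freeze the parameters --- is the right one. You also correctly flag why the sharp exponent matters (it is exactly what Theorem~\ref{t:l2est2} needs) and why a soft compactness argument would not suffice.

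Two places in your sketch deserve caution. First, the comparison $\int_{\partial B_r}(u^2+v^2)\lesssim r^{n+1}\int_{B_1}(u^2+v^2)$ ``via the approximate $1$-homogeneity'' is circular as stated, since approximate homogeneity is part of the conclusion; this step should instead rest on subharmonicity of $u^2,v^2$ directly. Second, the freezing of $\nu(r)$ is more delicate than your Cauchy--Schwarz line suggests: $\nu(r)$ is only well defined (even up to sign) at scales where the deficit is small, and a continuous selection across intervening scales has to be constructed and its cost accounted for. You acknowledge this in your last parenthetical, and the spiral of Example~\ref{ex: Spiral} shows the issue is genuine; making this rigorous is one of the technical points that makes the full proof in \cite{AKN1} nontrivial.
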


In \cite{AKN1}, Theorem~\ref{t:ACF} is stated with the additional assumption that $u(0)=v(0)=0$, i.e. that the point where the  ACF monotonicity formula is centered lies in the mutual zero set $\{x: u(x) = v(x) = 0\}$. This assumption can be removed with the following observation and notational caveat. Suppose $u,v$ are as in the statement of Theorem~\ref{t:ACF} and $v(0)>0$. By continuity, there is a radius $r_0$ such that $v(x)>0$, and thus $u(x) =0$, for all $x \in B_{r_0}(0)$. In turn, $J_0(r) =0$ for all $r <r_0$. Let us adopt the convention that, if $J_0(r) =0$, then $\log(J_0(R)/J_0(r)) = +\infty$ for any $R \geq r$. With this convention in place,  Theorem~\ref{t:l2est2} holds and the proof goes through without modification without the assumption that $u(0)=v(0)= 0$.

%
%
%
%
%For the purposes of this paper we will use the following slight improvement of Theorem~\ref{t:ACF}
%\begin{theorem} \label{t:ACFi}
% Let $u, v$ satisfy the assumptions of Theorem~\ref{t:ACF}. Assume also that $\log(J(1)/J(\rho))\leq 1$. Then there exists a constant $C$ depending only on $n$ and $\rho$ such that 
% $\int_{B_1} (u^2+v^2) \leq C \int_{B_\rho}(u^2 + v^2)$, and consequently 
%  \[
%  \int_{B_1 \setminus B_{\rho}} \left(u -\aone (x\cdot \nu)^+ \right)^2 + \left(v -\atwo (x\cdot \nu)^-\right)^2
%  \leq C\log\left(\frac{J(1)}{J(\rho)}\right) \int_{B_{3/4}} (u^2 + v^2).  
% \] 
%\end{theorem}
%
%\begin{proof}
% The proof in the quantitative ACF paper actually shows that if $u,v$ satisfy the assumptions of the theorem, then 
% \begin{equation} \label{e:3forbound}
% \int_{B_1} |\nabla u|^2 + |\nabla v|^2 \leq C \int_{B_{3/4}} u^2 + v^2\,.
% \end{equation}
% Suppose by way of contradiction the result is not true. Then there exists two sequences $u_{k}, v_{k}$ satisfying the assumptions of the theorem, but such that 
% \[
%  \int_{B_{3/4}} u_{k}^2 + v_{k}^2 \leq k \int_{B_{1}} u_{k}^2 + v_{k}^2. 
% \]
% By multiplying by a constant we may assume that $\int_{B_{1}} u_{k}^2 + v_{k}^2=1$. 
% Then there exist two limiting functions $u_{\infty}, v_{\infty}$ that satisfy the assumption of the theorem and with 
% \[
%  \int_{B_{1}} u_{\infty}^2 + v_{\infty}^2 =1 \quad \text{ and }  \int_{B_{3/4}} u_{\infty}^2 + v_{\infty}^2 =0. 
% \]
% But then by \eqref{e:3forbound} we have that $u$ and $v$ are constants. This gives a contradiction.
%\end{proof}
%
%
%
%

We will use Theorem~\ref{t:ACF} in the form above as well as in the form of the following corollary:
\begin{corollary}\label{cor: coeff 1} 
	Fix $n\geq 2$. 
	There are dimensional constants $\rho_0(n)$, $\kappa_0(n)$, and $C=C(n)$ such that the following holds. 
	Let $u, v :B_8(0)\to \R$ be nonnegative continuous functions that satisfy \eqref{eqn: ACF setup}  and suppose $\log(J(1)/J(\rho))\leq \kappa_0^2$. Suppose further that  $\aone = \atwo$ in the conclusion of  Theorem~\ref{t:ACF}. 
	Then for any $\rho < \rho_0$, 
	\begin{equation}
		\label{eqn: coeff 1}
	\int_{B_1\setminus B_\rho} \left(\frac{u }{\aone}- (x\cdot \nu)^+ \right)^2 + \left(\frac{v}{ \atwo}- (x\cdot \nu)^-\right)^2 \leq C\log\left(\frac{J(1)}{J(\rho)}\right)\,.
	\end{equation}
\end{corollary}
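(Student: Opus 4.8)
The plan is to obtain \eqref{eqn: coeff 1} as a normalized, dimensionless form of Theorem~\ref{t:ACF}. Dividing the estimate there through by $\aone\atwo = \aone^2$, the right-hand side becomes $C\log(J(1)/J(\rho))\cdot a^{-2}\int_{B_1}(u^2+v^2)$, so the only thing to prove is that, in the almost-rigid regime, the quantity $a^{-2}\int_{B_1}(u^2+v^2)$ is bounded by a purely dimensional constant. This is exactly the kind of statement one expects: almost rigidity forces $u,v$ to be $L^2$-close to the truncated linear pair $a(x\cdot\nu)^\pm$, whose $L^2(B_1)$ norm is a dimensional multiple of $a$.

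\textbf{Step 1: fix $\nu$ and $a$, and record the $\rho=0$ estimate.} By the smallness assumption, together with the last sentence of Theorem~\ref{t:ACF}, we may fix a single direction $\nu\in\mathbb{S}^{n-1}$ and a single constant $a:=\aone=\atwo>0$ realizing the conclusion of Theorem~\ref{t:ACF} simultaneously for every sub-scale in $[0,1/2]$. Applying that conclusion with $\rho = 0$ gives
\[
\int_{B_1}\bigl(u - a(x\cdot\nu)^+\bigr)^2 + \bigl(v - a(x\cdot\nu)^-\bigr)^2 \;\le\; C\,\kappa_0^2\int_{B_1}(u^2+v^2).
\]
Write $M := \bigl(\int_{B_1}(u^2+v^2)\bigr)^{1/2}$ and let $c_n := \|(x\cdot\nu)^+\|_{L^2(B_1)} = \|(x\cdot\nu)^-\|_{L^2(B_1)}$, which is a positive dimensional constant, independent of $\nu$ by rotational symmetry of $B_1$.

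\textbf{Step 2: bound $M$ by $a$.} From Step 1, $\|u - a(x\cdot\nu)^+\|_{L^2(B_1)}$ and $\|v - a(x\cdot\nu)^-\|_{L^2(B_1)}$ are each at most $C\kappa_0 M$, so the triangle inequality in $L^2(B_1)$ yields $\|u\|_{L^2(B_1)}^2 \le (ac_n + C\kappa_0 M)^2$ and likewise for $v$, hence
\[
M^2 \;\le\; 2\bigl(ac_n + C\kappa_0 M\bigr)^2, \qquad\text{i.e.}\qquad M \;\le\; \sqrt{2}\,\bigl(ac_n + C\kappa_0 M\bigr).
\]
Choosing the dimensional constant $\kappa_0$ small enough (and no larger than the one in Theorem~\ref{t:ACF}) that $\sqrt{2}\,C\kappa_0 \le \tfrac12$, we absorb the last term to get $M \le C_n\,a$, that is, $\int_{B_1}(u^2+v^2) \le C_n^2\,a^2$ for a dimensional constant $C_n$.

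\textbf{Step 3: conclude.} For the given $\rho<\rho_0$ (we take $\rho_0 \le 1/2$ dimensional), apply Theorem~\ref{t:ACF} again with the \emph{same} $\nu$ and $a$, which is legitimate by the $\rho$-independence established in Step 1, and divide the resulting inequality by $a^2 = \aone\atwo$. Using the bound $a^{-2}\int_{B_1}(u^2+v^2)\le C_n^2$ from Step 2, the right-hand side becomes $C\,C_n^2\,\log(J(1)/J(\rho))$, which is precisely \eqref{eqn: coeff 1}. There is no genuine obstacle in this argument—it is essentially a rescaling of Theorem~\ref{t:ACF}; the only points demanding care are the bookkeeping of constants in the absorption step of Step 2, and the degenerate case in which one of $u,v$ vanishes identically, where $J\equiv 0$ and, under the convention adopted after Theorem~\ref{t:ACF}, the right-hand side of \eqref{eqn: coeff 1} equals $+\infty$, so nothing is to be proved.
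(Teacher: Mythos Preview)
Your Step~1 contains a genuine gap. The hypothesis of the corollary is $\log(J(1)/J(\rho)) \leq \kappa_0^2$ for the \emph{given} scale $\rho$, not for $\rho = 0$. By monotonicity $J(0^+) \le J(\rho)$, so $\log(J(1)/J(0^+)) \ge \log(J(1)/J(\rho))$ and the inequality goes the wrong way: you have no bound on $\log(J(1)/J(0^+))$ at all. Consequently you are not entitled to invoke the last sentence of Theorem~\ref{t:ACF} (which requires smallness of $\log(J(1)/J(0^+))$, not of $\log(J(1)/J(\rho))$), and applying Theorem~\ref{t:ACF} at $\rho=0$ produces a right-hand side $C\log(J(1)/J(0^+))\int_{B_1}(u^2+v^2)$ that may be arbitrarily large or infinite. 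Your displayed inequality in Step~1 is therefore unjustified, and the absorption in Step~2 collapses.

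The paper's proof repairs exactly this point without ever going below scale $\rho$: it applies Theorem~\ref{t:ACF} only on the annulus $B_1\setminus B_\rho$ and then uses that $u\ge 0$ subharmonic implies $u^2$ subharmonic, so the ball averages are monotone and $\int_{B_\rho} u^2 \le \rho^n \int_{B_1} u^2$. This yields $\bigl|\,\|u\|_{L^2(B_1\setminus B_\rho)}-\|u\|_{L^2(B_1)}\,\bigr|\le \rho^{n/2}\|u\|_{L^2(B_1)}$, and combining with the triangle inequality on the annulus gives $\bigl|\,\|u\|_{L^2(B_1)}-a\|(x\cdot\nu)^+\|_{L^2(B_1)}\,\bigr|\le C(\kappa_0+\rho^{n/2})\|u\|_{L^2(B_1)}$. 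This is precisely where the constant $\rho_0$ enters: one needs $\rho^{n/2}$ small, in addition to $\kappa_0$ small, to absorb and obtain $\|u\|_{L^2(B_1)}\le C a$. In your argument $\rho_0$ plays no real role (you set it to $1/2$), which is a symptom of the missing step. Once $\|u\|_{L^2(B_1)}\le Ca$ is established this way, your Step~3 is correct.
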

\begin{proof}
	Without loss of generality, suppose $\| u\|_{L^2(B_1)} \geq  \| v\|_{L^2(B_1)}$.  By Theorem~\ref{t:ACF} and the triangle inequality,
		\begin{equation}
		\label{eqn: l2 norms a}
\Big| \| u\|_{L^2( B_1 \setminus B_\rho) } - \| \aone\, (x\cdot \nu)^+\|_{L^2( B_1 \setminus B_\rho) } \Big| \leq C\kappa_0\| u\|_{L^2(B_1)}
	\end{equation}
	for a dimensional constant $C$. 
	Since $u$ is nonnegative and subharmonic, $u^2$ is subharmonic and by the mean value property, $\int_{B_\rho} u^2\leq \rho^n \int_{B_1} u^2$. As such, 
	 $\left| \| u\|_{L^2(B_1\setminus B_\rho)}-\| u\|_{L^2(B_1)}\right| \leq \rho^{n/2} \| u\|_{L^2(B_1)}$. The analogous estimate holds for $\aone\,(x\cdot \nu)^+$ as well (since it is subharmonic, or by direct calculation). So, \eqref{eqn: l2 norms a} implies that 
	\begin{equation*}
\Big| \| u\|_{L^2( B_1) } - \| \aone\, (x\cdot \nu)^+\|_{L^2( B_1) } \Big| \leq C\left(\kappa_0 + \rho^{n/2}\right)\| u\|_{L^2(B_1)}\,.
	\end{equation*}
Provided $\rho_0$ and $\kappa_0$ are chosen to be small enough, the coefficient on the right-hand side is bounded above by $1/2$. 		 
So, dividing through by $\aone$ in Theorem~\ref{t:ACF}, we have 
\[
	\int_{B_1\setminus B_\rho} \left(\frac{u }{\aone}- (x\cdot \nu)^+ \right)^2 \leq C\| (x\cdot \nu)^+ \|_{L^{2}(B_{1})}^2 \,\log\left(\frac{J(1)}{J(\rho)}\right) =C \,\log\left(\frac{J(1)}{J(\rho)}\right)  \,	
\]
for a dimensional constant $C$. The same argument can now be repeated with $v$ in place of $u$ and with $\| \aone(x\cdot \nu)^+\|_{L^2(B_1 )}$ on the right-hand side of \eqref{eqn: l2 norms a}. 
\end{proof}

\subsection{Continuity properties for the ACF monotonicity formula}\label{ssec: cptness}
The Alt-Caffarelli-Friedman monotonicity formula is continuous in $x$ for fixed $r>0$, and it is upper semicontinuous in the sense that for any sequence $x_i \to x_0$ and $r_i \to 0^+$, we have 
\begin{equation}\label{eqn: USC}
\limsup_{i \to \infty} J_{x_i}(r_i) \leq J_{x_0}(0^+)\,.
\end{equation}
We leave the proofs of these facts to the reader. 
A less immediate continuity property of the ACF monotonicity formula is given in Corollary~\ref{cor: J convergence} below: if a pair of functions $(u, v)$ as in \eqref{eqn: ACF setup} converges in  $L^2(B_R)$ to a pair of complementary truncated linear functions, then their ACF formulas converge as well up to scale $R$. This key fact will follow from the next lemma.
\begin{lemma}\label{lem: improved convergence} Fix $R>0$. Let $\{u_j\} \subset  W^{1,2}(B_R)$ be a sequence of nonnegative subharmonic functions and let $u\in W^{1,2}(B_R)$ satisfy $u \, \Delta u=0$, i.e. $u$ is harmonic where it is positive. 
	If $ u_j \rightharpoonup u$ in $W^{1,2}(B_R)$ and $u_j \to u$ in $L^2(B_R)$, then $u_j \to u$ in $W^{1,2}(B_R)$ and
	\[
	\lim_{j \to \infty} \int_{B_R} \frac{|\na u_j|^2}{|x|^{n-2}} = \int_{B_R} \frac{|\na u|^2}{|x|^{n-2}}\,.
\]
\end{lemma}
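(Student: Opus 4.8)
\emph{The plan.} First note $u\ge 0$ a.e., being a weak $W^{1,2}$ limit of nonnegative functions. Both assertions reduce to an energy statement. Since $W^{1,2}(B_R)$ is a Hilbert space, $\|u_j-u\|_{W^{1,2}(B_R)}^2 = \|u_j\|_{W^{1,2}(B_R)}^2 - 2(u_j,u)_{W^{1,2}(B_R)} + \|u\|_{W^{1,2}(B_R)}^2$, and $(u_j,u)_{W^{1,2}(B_R)}\to\|u\|_{W^{1,2}(B_R)}^2$ by weak convergence; combined with $u_j\to u$ in $L^2$, strong convergence $u_j\to u$ in $W^{1,2}(B_R)$ is thus equivalent to $\int_{B_R}|\nabla u_j|^2\to\int_{B_R}|\nabla u|^2$, and by weak lower semicontinuity this amounts to the upper bound $\limsup_j\int_{B_R}|\nabla u_j|^2\le\int_{B_R}|\nabla u|^2$. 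Likewise, since the weight $|x|^{2-n}$ (or $-\log|x|$ when $n=2$) is bounded and smooth away from the origin, once strong $W^{1,2}$ convergence is in hand the weighted claim reduces to the uniform tail estimate $\lim_{\rho\to0}\limsup_j\int_{B_\rho}|\nabla u_j|^2/|x|^{n-2}=0$, the contribution of $u$ itself being controlled by dominated convergence since $|\nabla u|^2/|x|^{n-2}\in L^1(B_R)$.

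\emph{The core estimate.} Fix $\psi\in C_c^\infty(B_R)$ with $\psi\ge0$. Since $u_j$ is subharmonic and $u_j\psi\ge0$, testing $\Delta u_j\ge0$ against $u_j\psi$ gives $\int_{B_R}\nabla u_j\cdot\nabla(u_j\psi)\le0$; expanding $\nabla(u_j\psi)$ and using $\nabla(u_j^2)=2u_j\nabla u_j$ yields
\[
  \int_{B_R}\psi\,|\nabla u_j|^2 \;\le\; -\int_{B_R}u_j\,\nabla u_j\cdot\nabla\psi \;=\; \tfrac12\int_{B_R}u_j^2\,\Delta\psi\,.
\]
On the other hand, the hypothesis that $u$ is harmonic where positive (so $\int_{B_R}\nabla u\cdot\nabla(u\varphi)=0$ for all $\varphi\in C_c^\infty(B_R)$) turns the analogous computation into an \emph{equality}:
\[
  \int_{B_R}\psi\,|\nabla u|^2 \;=\; -\int_{B_R}u\,\nabla u\cdot\nabla\psi \;=\; \tfrac12\int_{B_R}u^2\,\Delta\psi\,.
\]
Because $u_j^2\to u^2$ in $L^1(B_R)$ (a consequence of $u_j\to u$ in $L^2$) and $\Delta\psi$ is a fixed bounded function, the right-hand sides converge, so $\limsup_j\int\psi|\nabla u_j|^2\le\int\psi|\nabla u|^2$; together with weak lower semicontinuity of $v\mapsto\int\psi|\nabla v|^2$ this gives $\int\psi|\nabla u_j|^2\to\int\psi|\nabla u|^2$ for every such $\psi$. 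This already yields $u_j\to u$ in $W^{1,2}_{\mathrm{loc}}(B_R)$; taking $\psi=\psi_k\uparrow1$ on $B_R$, with the transition layer shrinking onto $\partial B_R$, promotes this to strong convergence on all of $B_R$. The one point needing care is the negligibility of the transition-layer contributions, which is where I would use regularity of $u$ up to $\partial B_R$; in the settings where this lemma is applied the functions are in fact subharmonic (resp.\ harmonic where positive) on a ball strictly containing $B_R$, so one may simply take $\psi\equiv1$ on $B_R$ and this is automatic. The first conclusion of the lemma follows.

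\emph{The weighted tail, and the main obstacle.} It remains to control $\int_{B_\rho}|\nabla u_j|^2/|x|^{n-2}$ uniformly as $\rho\to0$. The plan is to use the Caccioppoli inequality for the subharmonic function $u_j$ (itself a special case of the core estimate, with $\psi=\zeta^2$ a cutoff), $\int_{B_s}|\nabla u_j|^2\le C s^{-2}\int_{B_{2s}}u_j^2$; letting $j\to\infty$ and invoking $u_j\to u$ in $L^2(B_R)$ together with the local Lipschitz bound on $u$ and $u(0)=0$ (both available in the ACF configuration, the center being a common zero) gives $\limsup_j\int_{B_s}|\nabla u_j|^2\le C s^{-2}\int_{B_{2s}}u^2\le C s^n$; summing over the dyadic shells $B_{2^{-k}\rho}\setminus B_{2^{-k-1}\rho}$ against $|x|^{2-n}\le(2^{-k-1}\rho)^{2-n}$ then produces $\limsup_j\int_{B_\rho}|\nabla u_j|^2/|x|^{n-2}\le C\rho^2\to0$, which closes the argument via the reduction in the first paragraph (the standard ``$\limsup\le\varepsilon$ for every $\varepsilon$'' device). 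I expect this last estimate to be the main obstacle: near $\partial B_R$ one simply invokes boundedness of the weight, but near the origin one must beat the $|x|^{2-n}$ blow-up, and the clean integration by parts of the core step — which only ever sees $\int u_j^2\,\Delta\psi$ — does not by itself suffice there; one genuinely needs the sub-mean-value inequality for $u_j$ and the vanishing together with the uniform Lipschitz control of the $u_j$ at the center of the ball.
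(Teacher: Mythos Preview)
Your core step---testing subharmonicity of $u_j$ against $u_j\psi$ to get $\int\psi|\nabla u_j|^2\le-\int u_j\nabla u_j\cdot\nabla\psi$, then passing to the limit and combining with weak lower semicontinuity---is exactly the paper's argument (the paper stops at $-\int u_j\nabla u_j\cdot\nabla\psi$ and pairs $\nabla u_j\rightharpoonup\nabla u$ with $u_j\nabla\psi\to u\nabla\psi$ strongly in $L^2$, rather than integrating by parts once more to $\tfrac12\int u_j^2\Delta\psi$; this is immaterial). The one place you diverge is the weighted conclusion: instead of your Caccioppoli-plus-dyadic tail estimate, the paper simply reuses the per-$\psi$ convergence with the test function $\phi_k=|x|^{2-n}\zeta_k$, where $\zeta_k\in C^1_c(B_R)$ vanishes on $B_{1/k}$ and increases to $\chi_{B_R}$, and then sends $k\to\infty$. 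This is much shorter and does not appeal to any Lipschitz or vanishing hypothesis on the limit. Your caution about the transition layers is nonetheless well-placed: the exchange of the $j$- and $k$-limits is not justified in the paper either, and in fact both conclusions fail in the stated generality---for instance $u_j(x)=j^{-1/2}\bigl(1-j(1-|x|)\bigr)^+$ on $B_1\subset\mathbb{R}^2$ concentrates energy at $\partial B_R$ while converging weakly in $W^{1,2}$ and strongly in $L^2$ to $0$, and $u_j(x)=\bigl(1-1/(j|x|)\bigr)^+$ on $B_1\subset\mathbb{R}^3$ converges \emph{strongly} in $W^{1,2}$ to $u\equiv1$ yet $\int_{B_1}|\nabla u_j|^2/|x|\to2\pi$. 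In the lemma's only application the functions are defined on a strictly larger ball and the limit is a truncated linear function vanishing at the origin, which supplies exactly the extra structure you (correctly) reach for.
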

\begin{proof}
	Fix a nonnegative function $\phi \in C^1_c(B_R(0))$. Because $u_j$ is subharmonic and $u_j \phi \geq 0$, we have
	\begin{align*}
		\int_{B_R} |\na u_j|^2 \phi  &  = -\int_{B_R} u_j \, \na u_j \cdot \na \phi + \int _{B_R} \na u_j \cdot \na ( u_j \phi ) \leq -\int_{B_R}  u_j \na u_j \cdot \na \phi.
	\end{align*}
Now, since $u_j \na \phi \to u \na \phi $ strongly in $L^2(B_R)$ and $\na u_j \rightharpoonup \na u$ weakly in $L^2(B_R)$, it follows that 
\begin{align*}
	\limsup_{j \to \infty}  	\int_{B_R} |\na u_j|^2 \phi & \leq  -\lim_{j \to \infty}  \int_{B_R}  u_j\, \na u_j \cdot \na \phi = -\int_{B_R}  u \, \na u \cdot \na \phi = \int_{B_R} |\na u|^2 \phi\,.
\end{align*}
Here the final identity comes from integration by parts and the fact that  $u \, \Delta u =0$. On the other hand, by lower semicontinuity of the energy with respect to $W^{1,2}$ weak convergence, 
\begin{align*}
	\liminf_{j \to \infty}  	\int_{B_R} |\na u_j|^2 \phi  \geq  \int_{B_R} |\na u|^2 \phi\,.
\end{align*}
These two inequalities together tell us that, for any nonnegative $\phi \in C^1_c(B_R(0))$, 
\[
\lim_{j\to \infty} \int_{B_R} |\na u_j |^2 \phi = \int_{B_R} |\na u |^2 \phi\,.
\]
 The first conclusion of the lemma follows from taking a sequence of functions $\phi_k$ that approximate the characteristic function of $B_R$, while the second conclusion follows from taking $\phi_k = |x|^{n-2} \psi_k$ where $\psi_k\in C^1_c(B_R(0))$ vanishes in $B_{1/k}(0)$ and approximates the characteristic function of $B_R$ in $L^2(B_R)$.
\end{proof}
As we mentioned above, Lemma~\ref{lem: improved convergence} has the following useful corollary that will allow us to upgrade $L^2$ convergence of sequences to convergence of their ACF formulas.
\begin{corollary}\label{cor: J convergence}
Let $u_j, v_j :B_2(0)\to \R$ be sequences of nonnegative continuous functions that satisfy \eqref{eqn: ACF setup} with  $\sup_{j} J_0(1; u_{j}, v_{j}) <+\infty$ and suppose $u_{j} \to  \aone (x\cdot \nu)^+$ and  $v_{j} \to \atwo(x\cdot \nu)^-$ in $L^2(B_1(0))$ for some $\aone, \atwo>0$ and $\nu \in \mathbb{S}^{n-1}$. Then, setting $c_* = n^2 \omega_n^2/16$, we have 
	\[
	\lim_{j \to \infty} J_0(r; u_{j}, v_{j} ) = c_*\, \aone^2 \, \atwo^2 \qquad \text{ for all } r \in (0,1]\,.
	\]
\end{corollary}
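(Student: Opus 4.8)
The plan is to apply Lemma~\ref{lem: improved convergence} separately to the sequences $\{u_j\}$ and $\{v_j\}$, and then compute the resulting ACF products explicitly. First I would observe that the hypothesis $\sup_j J_0(1; u_j, v_j) < +\infty$, combined with the $L^2$ convergence of $u_j$ and $v_j$, gives a uniform $W^{1,2}(B_{1-\delta})$ bound on each sequence away from the origin: indeed the integrals $\int_{B_1}|\nabla u_j|^2/|x|^{n-2}$ and $\int_{B_1}|\nabla v_j|^2/|x|^{n-2}$ must be bounded (else their product, i.e.\ $J_0(1)$ after the $r^{-2}$ normalization, would blow up, since neither factor can be too small once $u_j\to a(x\cdot\nu)^+$ with $a>0$). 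So, up to a subsequence, $u_j \rightharpoonup u$ in $W^{1,2}_{loc}(B_1)$ and likewise $v_j \rightharpoonup v$; by uniqueness of the $L^2$ limit, $u = a(x\cdot\nu)^+$ and $v = b(x\cdot\nu)^-$. These limit functions are harmonic where they are positive (linear functions are harmonic, and the positivity set of $a(x\cdot\nu)^+$ is an open half-space), so $u\,\Delta u = 0$ and $v\,\Delta v = 0$ in the distributional sense, and Lemma~\ref{lem: improved convergence} applies on any ball $B_r$ with $r\le 1$.

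The core step is then the explicit evaluation. Lemma~\ref{lem: improved convergence} gives, for each fixed $r \in (0,1]$,
\[
\lim_{j\to\infty}\int_{B_r}\frac{|\nabla u_j|^2}{|x|^{n-2}} = \int_{B_r}\frac{|\nabla u|^2}{|x|^{n-2}} = a^2 \int_{B_r}\frac{|\nu|^2\,\mathbf{1}_{\{x\cdot\nu>0\}}}{|x|^{n-2}}\,dx,
\]
and similarly for $v_j$ with $b^2$ in place of $a^2$ (and $\{x\cdot\nu<0\}$, which by symmetry gives the same value). So I would compute $I := \int_{B_r}\frac{\mathbf{1}_{\{x\cdot\nu>0\}}}{|x|^{n-2}}\,dx$. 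By rotational invariance this is half of $\int_{B_r}|x|^{2-n}\,dx$; writing that in polar coordinates gives $|\mathbb{S}^{n-1}|\int_0^r s^{n-1}s^{2-n}\,ds = |\mathbb{S}^{n-1}|\,r^2/2 = n\omega_n r^2/2$, so $I = n\omega_n r^2/4$. Plugging in:
\[
\lim_{j\to\infty}J_0(r;u_j,v_j) = \left(\frac{1}{r^2}\cdot a^2\cdot\frac{n\omega_n r^2}{4}\right)\left(\frac{1}{r^2}\cdot b^2\cdot\frac{n\omega_n r^2}{4}\right) = \frac{n^2\omega_n^2}{16}\,a^2 b^2 = c_*\,a^2 b^2,
\]
which is independent of $r$, as claimed. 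Finally, since the limit is the same for every subsequence (it is determined by $a,b,\nu$, which are fixed by the $L^2$ hypothesis, not by the subsequence), the full sequence converges.

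The main obstacle I anticipate is the bookkeeping in the first step: Lemma~\ref{lem: improved convergence} is stated for sequences that already converge weakly in $W^{1,2}$, so one must first extract such a subsequence, which requires the uniform $W^{1,2}$ bound. That bound does not follow from $\sup_j J_0(1) < \infty$ alone (a product being bounded does not bound each factor), but it does once we know each factor is bounded \emph{below} away from zero, which holds because $u_j \to a(x\cdot\nu)^+$ strongly in $L^2$ with $a > 0$ forces $\int_{B_1}|\nabla u_j|^2/|x|^{n-2}$ to stay bounded below (e.g.\ via a Caccioppoli/Poincaré-type lower bound, or simply because if it tended to $0$ then $u_j$ would tend to a constant, contradicting the limit). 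Handling the weight singularity at the origin is the other mild technical point, but Lemma~\ref{lem: improved convergence} is precisely designed to absorb it through the choice $\phi_k = |x|^{n-2}\psi_k$, so nothing new is needed there.
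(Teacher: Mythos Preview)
Your proposal is correct and follows the paper's approach in all essential respects: extract $W^{1,2}$ compactness from the bound on $J_0(1)$, apply Lemma~\ref{lem: improved convergence} to pass to the limit in each weighted Dirichlet integral, compute the resulting integral for truncated linear functions, and conclude by a subsequence argument. Your explicit computation of $c_*$ is in fact more detailed than the paper's.

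The one place you differ slightly from the paper is in extracting the $W^{1,2}$ bound on \emph{both} factors. You argue that each weighted Dirichlet integral is bounded \emph{below} away from zero (since $u_j \to a(x\cdot\nu)^+$ in $L^2$ with $a>0$ is incompatible with $\int_{B_1}|\nabla u_j|^2 \to 0$), and then the product bound gives an upper bound on both. The paper instead observes that a bounded product forces at least one factor to be bounded along a subsequence, applies Lemma~\ref{lem: improved convergence} to that factor to get convergence to $a^2\sqrt{c_*}>0$, and only \emph{then} uses positivity of this limit to bound the other factor along the same subsequence. Both routes work; the paper's bootstrap is slightly cleaner in that it never needs to justify a lower bound on a Dirichlet integral from $L^2$ data, whereas your route front-loads that step but then treats $u_j$ and $v_j$ symmetrically. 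One small correction: the weight $|x|^{2-n}\ge 1$ on $B_1$ already gives a full $W^{1,2}(B_1)$ bound, not merely $W^{1,2}(B_{1-\delta})$ ``away from the origin,'' so there is no need to localize.
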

The constant $c_*$ is such that  $J_0(r;  \aone (x\cdot \nu)^+, \atwo(x\cdot \nu)^-) = c_*\, \aone^2\,\atwo^2$ for any $r>0$.
\begin{proof}
The assumption $\sup_{j} J_0(1; \, u_{j},\, v_{j}) <+\infty$ implies that after passing to a subsequence, either
\begin{equation}\label{eqn: bounded energy}
	\sup_j \int_{B_1} \frac{|\na u_{j}|^2}{|x|^{n-2}} \,dx  < +\infty \qquad  \text{ or } \qquad 
		\sup_j \int_{B_1} \frac{|\na v_{j}|^2}{|x|^{n-2}}, \,dx  < +\infty  
\end{equation}
or both; we assume without loss of generality it holds for $u_j$.  
The weight $|x|^{2-n}$ is bounded below by $1$ on $B_1(0)$, so $\sup_j \int_{B_1(0)} |\na u_{j}|^2\,dx <+ \infty$ and thus up to a further subsequence, $u_{j} \rightharpoonup \aone (x\cdot \nu)^+$ in $W^{1,2}(B_1(0))$.
 The truncated linear function $\aone (x\cdot \nu)^+$ is harmonic where it is positive, so Lemma~\ref{lem: improved convergence} implies that
\[
\lim_{j \to \infty} \int_{B_1(0)} \frac{|\na u_{j}|^2}{|x|^{n-2}} \,dx = \int_{B_1(0)} \frac{|\na \, \aone (x\cdot \nu)^+|^2}{|x|^{n-2}} \,dx  = \aone^2 \sqrt{c_*}\,.
\]
Here $c_*$ is  the dimensional constant defined in the corollary statement and the final equality is an elementary calculation.
% carried out in Lemma~\ref{lem: simple comp} below. 
Now, since $\aone^2 \sqrt{c_*}>0$, the assumption $\sup_{j} J_0(1; \, u_{j},\, v_{j}) <+\infty$ now tells us that \eqref{eqn: bounded energy} holds for $v$ along the same subsequence. Repeating the same argument shows that 
\[
\lim_{j \to \infty} \int_{B_1(0)} \frac{|\na v_{j}|^2}{|x|^{n-2}} \,dx = \atwo^2 \sqrt{c_*}
\]
and we reach the conclusion of the corollary along a subsequence. Any subsequence has a further subsequence for which the conclusion of the corollary holds, so it  holds for the full sequence.
\end{proof}

\section{The $L^2$ Subspace Approximation and Packing Estimates}\label{sec: l2}
This section has two main goals. First, we prove an estimate known as the $L^2$ subspace approximation in Theorem~\ref{t:l2est2} below. This estimate quantitatively relates the Jones' $L^2$ beta numbers and the drop in the monotonicity formula from one scale to the next, and plays a key role in the Naber-Valtorta framework. The statement of Theorem~\ref{t:l2est2} is analogous to $L^2$ subspace approximations in other contexts.
%(e.g. \cite[Theorem 7.1]{NVmain},\cite[Theorem 45]{NVApprox}, \cite[Theorem 5.1]{EE19}). 
However, our proof is different from the standard one and in particular circumvents the use of the eigenvalues and eigenvectors of the ``inertia matrix'' associated to a measure.
The proof (as well as an adaptation of the standard proof) crucially relies on the sharp quantitative remainder term in for the ACF monotonicity formula of \cite{AKN1}. 

\begin{theorem}[$L^2$ Subspace Approximation] \label{t:l2est2} 
Fix $n\geq 2.$ There exist positive dimensional constants  $\kappa$  and $C$ such that the following holds. 
Let $u, v :B_{10}(0)\to \R$ be nonnegative continuous functions satisfying \eqref{eqn: ACF setup}.
%
%\footnote{Annoying: I guess we need $B_9(0)$ at least or else we can assume $r \in (0,1/2)$, maybe the latter is more elegant.}
%
 Let $\mu$ be a finite Borel measure. 
 %with $\text{spt } \mu \subset \Gamma$.
 For any $r \in(0,1)$ and $x \in B_1(0)$ such that 
 $\log(J_x(8r)/J_x(r)) < \kappa$, we have 
 \begin{equation} \label{e:intest}
 \begin{aligned}
  \beta_{\mu}(x,r)^2 &\leq \frac{C}{r^{n-1}} \int_{B_r(x)}  \log\left(\frac{J_{y}(8r)}{J_{y}(r)}\right) d \mu(y). 
  \end{aligned}
 \end{equation}
\end{theorem}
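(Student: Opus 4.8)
The plan is to produce a \emph{single} competitor hyperplane $L$ for the infimum defining $\beta_\mu(x,r)$ and to estimate $\int_{B_r(x)} d(y,L)^2\,d\mu(y)$ by the right-hand side. The natural candidate comes from Theorem~\ref{t:ACF}: after rescaling so that $x=0$ and $r=1$ (using the scaling relations in Section~\ref{ssec: notation}, $J_x(\rho;u,v) = J_0(\rho/r;u^{x,r},v^{x,r})$), apply Theorem~\ref{t:ACF} at the center $x$ with, say, $\rho = 1/8$ (or at scale $8r$ versus $r$ after a mild rescaling) to obtain slopes $\aone^x,\atwo^x$ and a direction $\nu^x$, and let $L = L^x$ be the hyperplane $\{(z-x)\cdot\nu^x = 0\}$ through $x$. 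The hypothesis $\log(J_x(8r)/J_x(r))<\kappa$ with $\kappa\le\kappa_0^2$ guarantees (via the second sentence of Theorem~\ref{t:ACF}) that $\aone^x,\atwo^x,\nu^x$ are scale-independent, so this $L^x$ is a genuine, well-defined competitor. Since $\ell_+ = \aone^x((z-x)\cdot\nu^x)^+$ and $\ell_- = \atwo^x((z-x)\cdot\nu^x)^-$ both vanish precisely on $L^x$ and are nondegenerate truncated linear functions, the pointwise distance $d(y,L^x)$ is controlled by $|\ell_+(y)| + |\ell_-(y)|$ up to the factor $1/\min(\aone^x,\atwo^x)$; and on the other hand $\ell_+(y)$ is close to $u(y)$ and $\ell_-(y)$ close to $v(y)$ in $L^2$ by Theorem~\ref{t:ACF}. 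The slopes $\aone^x,\atwo^x$ themselves must be shown to be bounded below by a fixed constant times $J_x(0^+)^{1/4}$ or similar; this follows from Corollary~\ref{cor: J convergence} / the computation that $J_0(r;\ell_+,\ell_-) = c_*(\aone^x)^2(\atwo^x)^2$ together with monotonicity, giving $\aone^x\atwo^x \gtrsim \sqrt{J_x(r)}$, which is where the lower-bound quantity enters.

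The core chain of inequalities I would carry out (all on $B_r(x)$, after rescaling to $B_1$) is:
\[
\int_{B_r(x)} \frac{d(y,L^x)^2}{r^2}\,\frac{d\mu(y)}{r^{n-1}}
\ \lesssim\ \frac{1}{(\aone^x\atwo^x)\, r^{n+1}} \int_{B_r(x)} \big(|u - \ell_+|^2 + |v - \ell_-|^2\big)\,d\mu(y) \cdot (\text{something})\,,
\]
but this is too crude because the measure $\mu$ is arbitrary, not Lebesgue. So instead I would \emph{not} integrate against the fixed competitor globally; rather, following the Naber--Valtorta philosophy, I would use the standard trick of comparing $d(y,L^x)$ to $d(y, L^y)$ where $L^y$ is the best plane at each point $y$ at scale $\sim r$, and estimate $|d(y,L^x) - d(y,L^y)|$ — but since in our setting \emph{every} blowup is $(n-1)$-symmetric, the cleaner route is: bound $d(y,L^x)^2$ pointwise by the $L^\infty$-type deviation of $u,v$ from $\ell_\pm$ at scale $r$ \emph{around $y$}, which by the subharmonicity/mean-value property and Theorem~\ref{t:ACF} applied at $y$ is controlled by $\log(J_y(8r)/J_y(r))$. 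Concretely: for $y\in B_r(x)$, $d(y,L^x) \lesssim$ (distance from $y$ to the zero set of the pair $(u,v)$) $+$ (how far $u,v$ at $y$'s scale are from truncated-linear), and the first term is $0$ if $y\in\Gamma$ while the second is estimated by applying Theorem~\ref{t:ACF} centered at $y$ at scales $r$ and $8r$ — yielding exactly the integrand $\log(J_y(8r)/J_y(r))$. The factor $r^{1-n}$ in front matches the normalization of $\beta_\mu$ and of the right-hand side, so no extra powers of $r$ should survive.

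\textbf{Main obstacle.} The delicate point is passing from the \emph{$L^2$} control of $u-\ell_+$ in Theorem~\ref{t:ACF} (an integral against Lebesgue measure) to a \emph{pointwise} bound on $d(y,L^x)$ that can be integrated against the arbitrary measure $\mu$ — this is exactly the step where, in the classical approach, one diagonalizes the inertia matrix $\int (y-\bar y)\otimes(y-\bar y)\,d\mu$ and relates its smallest eigenvalue to $\beta_\mu$. The authors explicitly say they \emph{avoid} the inertia matrix, so the trick must be to keep the competitor hyperplane $L^x$ \emph{fixed} (independent of $\mu$) and observe that $\beta_\mu(x,r)^2 \le \int d(y,L^x)^2 d\mu/r^{n+1}$ for \emph{that one} plane, then bound $d(y,L^x)$ for $\mu$-a.e.\ $y$ by a quantity depending only on $u,v$ near $y$ — using that $u(y)=v(y)=0$ is not assumed but that $d(y,L^x)$ is small precisely when the local blowup data at $y$ is close to that at $x$, quantified through a triangle inequality on the truncated-linear approximants at overlapping scales and the doubling $\log(J_y(8r)/J_y(r)) \le \log(J_x(16r)/J_x(r/2))$-type bounds from monotonicity of $J$. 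Making this ``triangle inequality in the space of truncated-linear pairs'' quantitative, with the sharp (quadratic, not just linear) dependence on the log-drop — which is why the optimal exponent in Theorem~\ref{t:ACF} is emphasized as crucial — is the technical heart of the argument.
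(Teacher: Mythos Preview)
Your overall architecture matches the paper's proof: fix a single competitor hyperplane $L$ (so that $\beta_\mu(x,r)^2 \le r^{-(n+1)}\int_{B_r(x)} d(y,L)^2\,d\mu$), apply Theorem~\ref{t:ACF} at each $y$ in the support of $\mu$ to obtain a local pair $\ell^\pm_y$, and compare $\ell^\pm$ with $\ell^\pm_y$ by a triangle inequality on an overlapping annulus to conclude $d(y,L)^2 \lesssim \log(J_y(8r)/J_y(r))$. The paper implements exactly this, via two preparatory lemmas: Lemma~\ref{l:qconesplitting} converts $L^2$-closeness of two pairs of truncated linear functions on a fixed annulus into a bound on the distance from the vertex of one to the zero hyperplane of the other, and Lemma~\ref{l:transfer} supplies the nondegeneracy $\aone_y + \atwo_y \ge c_0$ needed to invoke Lemma~\ref{l:qconesplitting}. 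Points $y$ where the drop is large ($\ge \kappa$) are handled trivially, since then $d(y,L)^2 \le C \le (C/\kappa)\log(J_y(8r)/J_y(r))$.

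There is, however, one genuine gap in your plan. You propose to take $L = L^x$, the hyperplane coming from Theorem~\ref{t:ACF} \emph{at the center} $x$. The triangle inequality on the shared annulus then yields, for each $y$,
\[
d(y, L^x)^2 \ \lesssim\ \log\frac{J_x(8r)}{J_x(r)} + \log\frac{J_y(8r)}{J_y(r)},
\]
and after integrating against $\mu$ the first term contributes $\log(J_x(8r)/J_x(r))\cdot\mu(B_r(x))/r^{n-1}$, which is \emph{not} dominated by the right-hand side of \eqref{e:intest} in general (the hypothesis $<\kappa$ does not help, since $\mu(B_r(x))$ is uncontrolled). The paper resolves this with a simple but essential trick: choose the competitor plane not at $x$ but at
\[
\bar{x} \ :=\ \operatorname*{argmin}_{z \in \overline{B}_r(x)}\ \log\frac{J_z(8r)}{J_z(r)}.
\]
Then $\log(J_{\bar{x}}(8r)/J_{\bar{x}}(r)) \le \log(J_y(8r)/J_y(r))$ for every $y \in B_r(x)$, so the $\bar{x}$-contribution is absorbed \emph{pointwise} into the $y$-contribution before integrating. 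This argmin device is precisely what makes the estimate hold for an arbitrary finite $\mu$ with no mass bound, and it is the step that replaces the inertia-matrix argument you allude to.

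Finally, the ``doubling-type'' comparison you float, $\log(J_y(8r)/J_y(r)) \le \log(J_x(16r)/J_x(r/2))$, is false: the ACF quantity is monotone in $r$ for a \emph{fixed} center, but there is no such comparison between $J_x$ and $J_y$ for distinct centers. So that route cannot substitute for the argmin trick.
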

 Here $ \beta_{\mu}(x,r)^2$ is the $(n-1)$-dimensional $L^2$ Jones' beta number defined in \eqref{eqn: beta numbers}.
Following \cite{NVmain}, Theorem~\ref{t:l2est2}  will be applied twice: when $\mu$ is the packing measure associated to a cover to prove the Proposition~\ref{lem: packing estimate for disjoint balls with small energy drop} below, and  with $\mu = \cH^{n-1} \mres \str_\e$ in section~\ref{sec: rect} in the proof of  Theorem~\ref{thm: main rectifiability}.

The second main goal of this section is to prove the packing estimates of  the following proposition. Roughly speaking, this proposition says that if a $\str_{\e, r}$ has a Vitali cover by balls with small drop in the ACF monotonicity formula at the centers, then the cover has a uniform $(n-1)$-dimensional packing bound.
The statement and proof of Proposition~\ref{lem: packing estimate for disjoint balls with small energy drop} are a standard part of the Naber-Valtorta framework.
\begin{proposition}[Packing Estimate]
	\label{lem: packing estimate for disjoint balls with small energy drop}
Fix $n\geq 2$ and $0<\e< 1/2$. There exist $\etaone=\etaone(n,\e)>0$ and $C(n)$ such that the following holds. Let $u, v :B_{10}(0)\to \R$ be nonnegative continuous functions satisfying \eqref{eqn: ACF setup}  and let $\bar J = \sup_{y \in B_1(0)} J_y (4)$. 
 If $\{B_{2r_p}(p)\}_p$ is a collection of disjoint balls with $p \in B_1(0)$ satisfying
 \begin{equation} \label{e:assume}
 J_p(\etaone\, r_p) \geq \bar J - \etaone, 
 \qquad
  p \in \str_{\epsilon,  R} \cap B_1(0), 
 \qquad R \leq r_p \leq 1, 
 \end{equation}
 then 
 \begin{equation}
 	\label{eqn: packing conclusion}
 	  \sum_{p} r_p^{n-1} \leq C(n). 
 \end{equation}
\end{proposition}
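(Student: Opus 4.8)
The plan is to apply the Discrete Reifenberg Theorem (Theorem~\ref{thm: discrete reifenberg}) to the packing measure $\mu = \sum_p (2r_p)^{n-1}\delta_p$ (or $\sum_p r_p^{n-1}\delta_p$; the factor is harmless since the balls $\{B_{2r_p}(p)\}$ are disjoint). To invoke it, I need the Dini-type bound $\int_0^{2s}\int_{B_s(q)} \beta_\mu(z,t)^2\, d\mu(z)\,\frac{dt}{t} \leq \delta_0\, s^{n-1}$ for all $q\in B_1(0)$ and $s\in(0,1]$. The engine for this is Theorem~\ref{t:l2est2}: for scales $t$ at which $\log(J_z(8t)/J_z(t))<\kappa$ we have $\beta_\mu(z,t)^2 \leq \frac{C}{t^{n-1}}\int_{B_t(z)}\log(J_y(8t)/J_y(t))\,d\mu(y)$. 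The key point is that the hypothesis \eqref{e:assume}, namely $J_p(\etaone r_p)\geq \bar J - \etaone$ together with $J_p(4)\leq \bar J$ and monotonicity, forces the \emph{total} dyadic energy drop $\sum_j \log(J_p(2^{-j})/J_p(2^{-j-1}))$ over all relevant scales to be at most $\approx \log(\bar J/(\bar J - \etaone))$, which can be made as small as we like by choosing $\etaone$ small. So choose $\etaone$ small enough (depending on $n$, $\e$, and a uniform upper bound for $\bar J$, which is controlled since $J_y(4)$ is bounded on $B_1$ by compactness/upper semicontinuity \eqref{eqn: USC}) that this total drop is below both $\kappa$ and what is needed to get $\delta_0$.

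The key steps, in order: (1) Fix $q\in B_1(0)$ and $s\in(0,1]$. For each dyadic scale $t = 2^{-j}\lesssim s$, split into "good" centers $z\in B_s(q)\cap\{$supp$\,\mu\}$ where $\log(J_z(8t)/J_z(t))<\kappa$ and apply Theorem~\ref{t:l2est2}, and "bad" centers where this fails — but by the total-drop bound above, for $\etaone$ small there are \emph{no} bad centers among the $p$'s at any scale $t\geq R$ (and $\beta_\mu(z,t)=0$ trivially once $t$ is below the minimal $r_p$ in $B_s(q)$, since then at most one atom is seen... actually one must be slightly careful here — see below). (2) Integrate the pointwise estimate from Theorem~\ref{t:l2est2} over $t\in(0,2s)$ and apply Fubini:
\[
\int_0^{2s}\int_{B_s(q)}\beta_\mu(z,t)^2\,d\mu(z)\,\frac{dt}{t} \;\leq\; C\int_0^{2s}\int_{B_s(q)}\frac{1}{t^{n-1}}\int_{B_t(z)}\log\frac{J_y(8t)}{J_y(t)}\,d\mu(y)\,d\mu(z)\,\frac{dt}{t}.
\]
Swap the $z$ and $y$ integrals; the inner $z$-integral $\int_{B_t(y)}d\mu(z)$ is a packing quantity which is itself bounded by $C\,t^{n-1}$ — but this is exactly the estimate we are trying to prove, so this must be set up as an a priori/bootstrap or handled by the standard Naber–Valtorta induction on scales. (3) This reduces the right-hand side to $C\int_{B_{2s}(q)}\big(\sum_{j}\log\frac{J_y(8\cdot 2^{-j})}{J_y(2^{-j})}\big)\,d\mu(y) \leq C\,(\text{total drop})\,\mu(B_{2s}(q)) \leq C\,\etaone'\,s^{n-1}$, again using an a priori packing bound on $\mu(B_{2s}(q))$ and the total-drop bound. (4) Choosing $\etaone$ small makes this $\leq \delta_0 s^{n-1}$, so Theorem~\ref{thm: discrete reifenberg} gives $\sum_p r_p^{n-1}\leq C(n)$.

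The main obstacle is the circularity in steps (2)–(3): bounding the double integral needs a packing bound $\mu(B_t(y))\lesssim t^{n-1}$, which is essentially the conclusion. The standard resolution — which I would follow, as in \cite{NVmain, EE19} — is an induction on scale: one proves the packing bound for balls of radius $\geq \rho$ assuming it for all larger radii, using at scale $\rho$ only the already-established bounds at coarser scales inside the Fubini step, and then applies the Discrete Reifenberg Theorem (whose own proof is an induction of this type) at each stage; alternatively one truncates the family to $r_p\geq\rho$, proves a uniform bound independent of $\rho$, and lets $\rho\to 0$. A secondary technical point is verifying the Dini hypothesis also at scales $t$ below the smallest $r_p$ appearing in a given $B_s(q)$: there $\beta_\mu(z,t)$ sees at most the single atom at $z$ plus possibly nearby atoms, but disjointness of the $B_{2r_p}(p)$ controls how many atoms of comparable or smaller size can cluster, keeping the contribution summable. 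I expect these bookkeeping issues to be routine within the Naber–Valtorta framework, with all the genuinely new input already packaged in Theorem~\ref{t:l2est2} and the total-energy-drop bound coming from monotonicity of $J$.
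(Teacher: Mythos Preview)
Your proposal is correct and follows the paper's approach: apply the Discrete Reifenberg Theorem to the packing measure, control the Dini hypothesis via Theorem~\ref{t:l2est2} together with the small total energy drop forced by \eqref{e:assume}, and resolve the circularity by the standard Naber--Valtorta induction on dyadic scales. One small correction on the bookkeeping you flagged: the paper truncates the other way, setting $\mu_i = \sum_{r_p \leq 2^{-i}} r_p^{n-1}\delta_p$ and inducting from fine scales (where $\mu_i\equiv 0$, giving a vacuous base case) up to coarse ones, using the disjointness of $\{B_{2r_p}(p)\}$ to see that at scale $2^{-j}$ only atoms with $r_p \leq 2^{-j}$ contribute to $\beta$---this is precisely what handles your ``secondary technical point.''
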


\subsection{Two initial lemmas} We prove two initial lemmas in preparation for the proof of Theorem~\ref{t:l2est2}. The first one will be applied when $\eta$ is small, and says that if two pairs of complementary truncated linear functions are close in an $L^2$ sense on an annulus, then their hyperplane interfaces are also close in a quantitative way.
\begin{lemma}  \label{l:qconesplitting} 
There exists a positive dimensional constant $C$ such that the following holds.
 Let $y_1,y_2 \in B_1(0)$ and $\nu_1, \nu_2 \in \mathbb{S}^{n-1}$.  
 Let 
 \begin{align*}
 	\ell_{1}^+ = \aone_{1} [(x-y_1)\cdot \nu_1]^+ , & \qquad  \ell_{1}^- =\atwo_{1} [(x-y_1) \cdot \nu_1]^-, \\ 
\ell_{2}^+ = \aone_{2} [(x-y_2)\cdot \nu_2]^+ , & \qquad \ell_{2}^- =\atwo_{2} [(x-y_2) \cdot \nu_2]^-.
  \end{align*}
   Assume that $\max\{\aone_{1},\atwo_{1}\} \geq c$. For any $\eta \leq 1/16$, if
 \[
  \int_{B_6(0) \setminus B_2(0)} (\ell_{1}^+ - \ell_{2}^+)^2 + (\ell_{1}^- - \ell_{2}^-)^2 \leq \eta, 
 \]
 then  $d(y_2, L)\leq \frac{C}{c} \sqrt{\eta}$, where $L =\{ x\in \R^n: (x-y_1)\cdot \nu_1=0\}$ is the mutual zero set of $\ell_{1}^+ $ and $\ell_{1}^-$.
\end{lemma}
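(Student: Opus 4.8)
Here is my plan for proving Lemma~\ref{l:qconesplitting}.

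\medskip

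\noindent\textbf{Overall strategy.} The statement is purely about truncated linear functions, so the proof should be elementary geometry made quantitative. The key point is that $\ell_1^+ + \ell_1^-$ agrees with $|a_1[(x-y_1)\cdot\nu_1]|$ up to the piecewise-constant slope, and similarly for $\ell_2$; the $L^2$-closeness of the positive and negative parts separately forces the two "wedge" functions to be close, which in turn pins down both the direction $\nu_2$ (up to sign) and the location of the hyperplane $L_2 = \{(x-y_2)\cdot\nu_2 = 0\}$. Since we only want to control $d(y_2, L)$ (the distance from the center $y_2$ to the \emph{other} hyperplane $L$), and $y_2 \in L_2$ automatically, it suffices to show $L$ and $L_2$ are close inside the annulus $B_6 \setminus B_2$, and then evaluate at the point $y_2 \in L_2 \cap B_1$.

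\medskip

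\noindent\textbf{Key steps.} First I would reduce to a normalization: by the hypothesis $\max\{a_1, b_1\} \ge c$, assume WLOG $a_1 \ge c$ (the case $b_1 \ge c$ is symmetric, swapping $+$ and $-$). Second, I would use the hypothesis to get pointwise-in-$L^2$ control: from $\int_{B_6\setminus B_2}(\ell_1^+ - \ell_2^+)^2 \le \eta$ and the fact that $\ell_1^+$ has slope $a_1 \ge c$, I would first argue that $a_2$ cannot be too small and $\nu_2$ cannot be too far from $\pm\nu_1$ — otherwise on a definite-measure subset of the annulus $\ell_1^+$ is of order $c$ while $\ell_2^+$ vanishes (or vice versa), contradicting the smallness of $\eta$ (for $\eta \le 1/16$ with the implied constants). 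This gives $|a_2 - a_1| \lesssim \sqrt\eta/c \cdot(\text{stuff})$ and $|\nu_2 \mp \nu_1| \lesssim \sqrt\eta/c$ after fixing the sign of $\nu_2$ so that the half-spaces roughly align. Third, with the directions nearly aligned and slopes comparable (both $\gtrsim c$), the function $\ell_1^+ - \ell_2^+$ restricted to the region where \emph{both} are positive behaves like $a_1(x-y_1)\cdot\nu_1 - a_2(x-y_2)\cdot\nu_2$, an affine function; its $L^2$ smallness on a ball of definite size forces all its coefficients small, in particular the constant term, which encodes $a_2 y_2 \cdot \nu_2 - a_1 y_1 \cdot \nu_1$. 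Combining this with control on $y_1$ (bounded, in $B_1$) and the slope/direction estimates yields that the affine functions $x \mapsto (x-y_1)\cdot\nu_1$ and $x\mapsto (x-y_2)\cdot\nu_2$ are $O(\sqrt\eta/c)$-close on $B_6$, hence their zero sets $L$ and $L_2$ are $O(\sqrt\eta/c)$-close there. Finally, since $y_2 \in L_2 \cap B_1 \subset L_2 \cap B_6$, we conclude $d(y_2, L) \le \sup_{z \in L_2 \cap B_6} d(z, L) \le \frac{C}{c}\sqrt\eta$.

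\medskip

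\noindent\textbf{Main obstacle.} The delicate point is Step two: controlling the direction $\nu_2$ when the two wedges could a priori be "opened toward each other" versus "opened the same way." One must choose the correct sign for $\nu_2$ (the positivity set of $\ell_2^+$ should roughly coincide with that of $\ell_1^+$, not its complement), and this is forced precisely because we are told the \emph{positive parts} are close and the \emph{negative parts} are close separately — if $\nu_2 \approx -\nu_1$ then $\ell_2^+ \approx$ something supported where $\ell_1^- $ lives, and the cross term $\int \ell_1^+ \ell_2^-$-type considerations give a contradiction. Making this robust requires a careful lower bound: on the annulus, both $\{(x-y_1)\cdot\nu_1 > 1\}$ and $\{(x-y_1)\cdot\nu_1 < -1\}$ have measure bounded below by a dimensional constant (using $y_1\in B_1$, so the hyperplane through $y_1$ passes through $B_1$, leaving definite mass of the annulus on each side), and on the first of these $\ell_1^+ \gtrsim c$ while $\ell_1^- = 0$. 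This is where the hypotheses $y_i \in B_1$ and the specific radii $2, 6$ enter, and I expect the bookkeeping of these definite-measure regions — rather than any deep idea — to be the bulk of the work. Everything else is the triangle inequality and the equivalence of norms on the finite-dimensional space of affine functions restricted to a fixed ball.
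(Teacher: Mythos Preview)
Your approach is correct but genuinely different from the paper's. The paper argues in the contrapositive: setting $d(y_2,L)=\sqrt\eta$, it normalizes coordinates so that $\nu_1=e_n$, $y_1=0$, $y_2=\sqrt\eta\,e_n$, $\nu_2\in\operatorname{span}\{e_1,e_n\}$, writes $\ell_2^+=[c_1 a_1(x^n-\sqrt\eta)+c_2 a_1\sqrt\eta\,x^1]^+$ for parameters $c_1,c_2\ge 0$, and then runs a six-case analysis on $(c_1,c_2)$ to exhibit, in each case, a definite-measure subset of the annulus on which $|\ell_1^+-\ell_2^+|\ge a_1\sqrt\eta/8$ pointwise. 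There is no separate step aligning $\nu_2$ with $\nu_1$ or comparing slopes; everything is absorbed into the case split. Your route---first force $\nu_2\approx\pm\nu_1$ and $a_2\gtrsim c$ by a symmetric-difference argument, then on the common positivity region reduce to the $L^2$-rigidity of affine functions on a fixed ball---is more conceptual and modular, and would generalize more readily (e.g.\ to other norms or to more complicated piecewise-linear models). The paper's brute-force case analysis is shorter to write down and avoids having to carefully locate a ball in the intersection of the two positive half-spaces.

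One small point to tighten in your Step~2: to conclude $a_2\gtrsim c$ (and $|\nu_2\mp\nu_1|$ small) from a definite-measure contradiction, you need $\eta\lesssim c^2$, not merely $\eta\le 1/16$. This is harmless---if $\sqrt\eta/c$ exceeds a dimensional constant, then $d(y_2,L)\le |y_1|+|y_2|\le 2\le \tfrac{C}{c}\sqrt\eta$ holds trivially---but you should state this reduction explicitly at the outset so that the ``contradicting the smallness of $\eta$'' step in your sketch is justified.
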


\begin{proof}
Let $\sqrt{\eta}= d(y_2,  L)$. 
 We will prove the lemma by showing that 
 \begin{equation}
 	\label{eqn: integral lower bound}
  \int_{B_6(0) \setminus B_2(0)} (\ell_{1}^+ - \ell_{2}^+)^2 + (\ell_{1}^- - \ell_{2}^-)^2 \geq c^2 \eta.  
 \end{equation}
 Up to a rotation we may assume that $\nu_1 = e_n$, $\aone_1 \geq c$, and $\nu_2$ is a linear combination of $e_1$ and $e_n$. 
Since $y_1, y_2 \in B_1(0)$, it suffices to replace the domain of integration in \eqref{eqn: integral lower bound} by the smaller set $B_5(y_1) \setminus B_3(y_1) \subset B_6(0) \setminus B_2(0)$; up to a translation, we may take $y_1$ to be the origin and $y_2 \in B_2(0)$.
 %, and thus integrate over $B_{5}(y_1)\setminus B_3(y_1)$. 
 After these normalizations, $L = \{x \in \R^n : x \cdot e_n  = 0\}$.
 Since we are only finding $d(y_2, L)\leq C \sqrt{\eta}$, we may assume that the $i$-th coordinates of $y_2$ satisfy $y^i_2=0$ for $1 \leq i <n$. 
 For simplicity we will treat the case when $y^n_2 \geq 0$; the case in which $y_2^n <0$ is similar. By relabeling $a_2$ and $b_2$ if necessary, we may assume $\nu_2 \cdot e_n \geq 0$. Finally, by symmetry we will assume  $\nu_2 \cdot e_1 \geq 0$.
With these  assumptions we have the following simplifications:
 \begin{align*}
 	\ell_1^+ = \aone_{1} x^n_+,  & \qquad  \ell_1^- = \atwo_{1} x^n_- , \\ 	\ell_2^+ = \aone_{2} [(x- \sqrt{\eta}e_n) \cdot (\gamma_1 e_1 + \gamma_2 e_n)]^+ , & \qquad \ell_2^- = \atwo_{2} [(x-\sqrt{\eta}e_n)\cdot (\gamma_1 e_1 + \gamma_2 e_n)]^-. 
 \end{align*}
 So, we may find  constants $c_1, c_2\geq 0$ such that  $\ell_2^+$ can be expressed as 
\[
\ell_2^+ = \aone_{2}[(x- \sqrt{\eta}e_n) \cdot (\gamma_1 e_1 + \gamma_2 e_n)]^+ = [c_1\aone_{1}(x^n - \sqrt{\eta}) + c_2 \aone_{1} \sqrt{\eta} x^1]^+.
\]
It will also suffice to integrate over the set 
$$\tilde{A}=(B_5(y_1) \setminus B_3(y_1)) \cap \{x^n >\sqrt{\eta}\} \cap \{ x^1 >0\}. $$
We note on $\tilde{A}$ that $\ell_1^-= \ell_2^- = 0$ . 
%a_1 x^n$ and $\mathbf{l}_2=c_1\aone_{1}(x^n - \sqrt{\eta}) + c_2 \aone_{1} \sqrt{\eta} x^1$.
  We will show that $|\ell_1^+ - \ell_2^+|\geq \aone_1 \sqrt{\eta}/8$ pointwise on a subset of $\tilde{A}$ with measure universally bounded below. 
 To do this, it will suffice to have either 
 \[
  \aone_{1} x^n - c_1 \aone_{1} (x^n - \sqrt{\eta}) - c_2 \aone_{1} \sqrt{\eta} x^1 \geq \aone_{1} \sqrt{\eta}/8,
 \]
 or 
 \[
  \aone_{1} x^n - c_1 \aone_{1} (x^n - \sqrt{\eta}) - c_2 \aone_{1} \sqrt{\eta} x^1 \leq - \aone_{1} \sqrt{\eta}/8,
 \]
 on a sufficiently large subset of $\tilde{A}$. 
 The above inequalities simplify to 
 \begin{equation} \label{e:simp1}
  \frac{1-c_1}{\sqrt{\eta}} x^n \geq \frac{1}{8} - c_1 + c_2 x^1,
 \end{equation}
 and 
  \begin{equation} \label{e:simp2}
  \frac{1-c_1}{\sqrt{\eta}} x^n \leq \frac{-1}{8} - c_1 + c_2 x^1. 
 \end{equation}
We again note that $x^n \geq \sqrt{\eta}$ and $x^1 \geq 0$. We break up the proof into several cases. 

\textbf{Case 1a}: $0 \leq c_1 \leq 1$ and $c_2 \leq 3/8$. If $0 \leq x^1 \leq 1$ (and since $x^n/\sqrt{\eta}\geq 1$), then \eqref{e:simp1} holds. 

\textbf{Case 1b}: $0 \leq c_1 \leq 1$ and $c_2 \geq 3/8$ and $1-c_1 \geq c_2 \sqrt{\eta}$. If $0\leq x^1 \leq 1$ and $x^n \geq 2$, then \eqref{e:simp1} holds. 

\textbf{Case 1c}: $0 \leq c_1 \leq 1$ and $c_2 \geq 3/8$ and $1-c_1 \leq c_2 \sqrt{\eta}$. If $ x^1 \geq 4$ and $x^n \leq 1$, then \eqref{e:simp2} holds. 

\textbf{Case 2a}: $c_1 >1$ and $c_2 \geq 3/8$. If $x_1 \geq 3$ (and since $x^n/\sqrt{\eta}\geq 1$), then \eqref{e:simp2} holds. 

\textbf{Case 2b}: $c_1 >1$ and $c_2 \leq 3/8$ and $1-c_1 \geq -c_1 \sqrt{\eta}$. If $x_n \leq 1/2$ and $0 \leq x^1 \leq 1$, then \eqref{e:simp1} holds. 

\textbf{Case 2c}: $c_1 >1$ and $c_2 \leq 3/8$ and $1-c_1 \leq -c_1 \sqrt{\eta}$. If $x_n \geq 2$, then \eqref{e:simp2} holds.

%\textbf{Case 1}: $c_1 >1$. By choosing $\sqrt{\eta} \leq 1/2$, then \eqref{e:simp2} holds for $x^n \geq 1$ which is a substantial subset of $\tilde{A}$. 

%\textbf{Case 2}: $0 \leq c_1 \leq 1$ and $c_2 \geq 1/4$. Consider the set $\tilde{A} \cap \{|(x^1, \ldots, x^{n-1},0)|<1\}$. If \eqref{e:simp1} does not hold on this set, then $x^n$ will 
%only decrease when $\tilde{A}\cap \{2\leq x^1\leq 3\}$. Furthermore, since $c_2 \geq 1/4$, we have that \eqref{e:simp2} will hold on $\tilde{A}\cap \{2\leq x^1\leq 3\}$.

%\textbf{Case 3}: $0 \leq c_1 \leq 1/2$ and $c_2 \leq 1/4$. Let $ 0 \leq x^1 \leq 1$. Then for $\sqrt{\eta}\leq 1/2$, inequality \eqref{e:simp1} will hold for $x^n \geq 2$. 

%\textbf{Case 4}: $1/2 \leq c_1 \leq 1$ and $c_2 \leq 1/4$. Choose $0 \leq x^1 \leq 1/4$ and \eqref{e:simp1} holds since the right hand side is negative. 

In each of the above exhaustive cases, we obtain a subset of $\tilde{A}$ with positive measure bounded below so that either \eqref{e:simp1} or \eqref{e:simp2} holds. Thus, the result is proven. 
 \end{proof}

The next lemma shows that if an admissible pair $u,v$ are normalized on $B_8(0)$, then the best-approximating truncated linear functions chosen with respect to any $z \in B_1(0)$ will be nondegenerate.
\begin{lemma} \label{l:transfer}
 Let $u,v$ be an admissible pair on $B_9(0)$ with $\|u+v\|_{L^2(B_8)}=1$. Then there exist positive dimensional constants $c_0$ and $\kappa$ such that if   $\log(J_0(8)/J_0(1))\leq \kappa$, and 
 if $z \in B_1$ with $\log(J_z(8)/J_z(1))\leq \kappa$ and 
 \begin{equation} \label{e:plane}
 \int_{B_{7}(z) \setminus B_{1}(z)} [u(y) - a_z ((y-x) \cdot \nu_z)^+]^2 + [v(y) - b_z ((y-x) \cdot \nu_z)^-]^2 \ dy < \kappa, 
 \end{equation}
 then $a_z + b_z \geq c$. 
\end{lemma}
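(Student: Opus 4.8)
The plan is to argue by contradiction using the $W^{1,2}$-compactness machinery of Corollary~\ref{cor: J convergence}. Suppose the lemma fails: then there is a sequence of admissible pairs $(u_j, v_j)$ on $B_9(0)$ with $\|u_j + v_j\|_{L^2(B_8)} = 1$, with $\log(J_0^j(8)/J_0^j(1)) \to 0$, and points $z_j \in B_1(0)$ with $\log(J_{z_j}^j(8)/J_{z_j}^j(1)) \to 0$ satisfying the $L^2$ closeness \eqref{e:plane} to truncated linear functions $a_j((y - x)\cdot \nu_j)^+$, $b_j((y-x)\cdot\nu_j)^-$ with $a_j + b_j \to 0$. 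Up to subsequences, $z_j \to z_\infty \in \overline{B_1(0)}$ and $\nu_j \to \nu_\infty$. First I would use subharmonicity together with the normalization $\|u_j + v_j\|_{L^2(B_8)} = 1$ and the energy bound (which follows from $\sup_j J_0^j(8) < \infty$, itself a consequence of the small-drop hypothesis and, say, an a priori bound on $J_0^j(8)$ — or one simply extracts along a subsequence where $J_0^j(8)$ converges) to get weak $W^{1,2}$ and strong $L^2$ subsequential limits $u_j \to u_\infty$, $v_j \to v_\infty$ on compact subsets of $B_9(0)$, with $u_\infty, v_\infty$ admissible and $\|u_\infty + v_\infty\|_{L^2(B_8)} = 1$, so in particular $(u_\infty, v_\infty)$ is not the trivial pair.

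The key step is to identify $(u_\infty, v_\infty)$ as a pair of complementary truncated linear functions through the origin. Since $\log(J_0^j(8)/J_0^j(1)) \to 0$, Theorem~\ref{t:ACF} (applied on $B_2(0)$ after rescaling, with the convention handling $J = 0$) gives truncated linear functions $\tilde a_j((y)\cdot\tilde\nu_j)^+$, $\tilde b_j(y\cdot\tilde\nu_j)^-$ with $\int_{B_1 \setminus B_\rho}(u_j - \tilde a_j(y\cdot\tilde\nu_j)^+)^2 + (v_j - \tilde b_j(y\cdot\tilde\nu_j)^-)^2 \to 0$; passing to the limit, $u_\infty = \tilde a_\infty (y\cdot\tilde\nu_\infty)^+$ and $v_\infty = \tilde b_\infty(y\cdot\tilde\nu_\infty)^-$ on $B_1(0)$ (and by unique continuation / the structure of admissible pairs, on all of $B_9(0)$). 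Because the limiting pair is nontrivial, not both $\tilde a_\infty, \tilde b_\infty$ vanish. Similarly, the hypothesis $\log(J_{z_j}^j(8)/J_{z_j}^j(1)) \to 0$ combined with \eqref{e:plane} and the $L^2$ convergence forces $u_\infty = a_\infty((y - z_\infty)\cdot\nu_\infty)^+$ and $v_\infty = b_\infty((y-z_\infty)\cdot\nu_\infty)^-$ on the annulus $B_7(z_\infty) \setminus B_1(z_\infty)$, where $a_\infty = \lim a_j$, $b_\infty = \lim b_j$. But $a_j + b_j \to 0$ with $a_j, b_j \geq 0$ forces $a_\infty = b_\infty = 0$, hence $u_\infty = v_\infty = 0$ on that annulus. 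By subharmonicity and the maximum principle (or by real-analyticity of the truncated linear representation away from its interface), a nonzero truncated linear function cannot vanish on such an annulus, so $u_\infty \equiv v_\infty \equiv 0$ on $B_9(0)$, contradicting $\|u_\infty + v_\infty\|_{L^2(B_8)} = 1$.

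The main obstacle I anticipate is making rigorous the two identifications of the limit as a truncated linear function and, in particular, matching the two representations: one must ensure that the $L^2$ limits extracted from Theorem~\ref{t:ACF} on $B_1(0)$ and from \eqref{e:plane} on the annulus about $z_\infty$ are genuinely the same function $(u_\infty, v_\infty)$ and that a nonzero truncated linear function through one point cannot be identically zero on an annulus centered at another point of $B_1(0)$ — this is where one needs that the annulus $B_7(z_\infty)\setminus B_1(z_\infty)$ is fat enough to see the positive side of the half-space no matter where the interface hyperplane through $z_\infty$ lies (here the quantitative version is exactly Lemma~\ref{l:qconesplitting}, but for the qualitative contradiction a direct geometric observation suffices). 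A secondary technical point is justifying the uniform energy bound $\sup_j J_0^j(8) < \infty$ needed to run the compactness; this should follow from the small-drop hypotheses together with a harmless normalization or by simply passing to a subsequence along which $J_0^j(8)$ converges in $[0,\infty]$ and ruling out $+\infty$ via the $L^2$ normalization and Caccioppoli.
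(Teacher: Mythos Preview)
Your compactness argument can be made to work, but the paper proves the lemma directly without passing to limits. The paper's route: subharmonicity of $u^2,v^2$ gives $\int_{B_8\setminus B_1}(u^2+v^2)\geq 1-8^{-n}$; applying Theorem~\ref{t:ACF} at the origin (rescaled so that the relevant ratio is $J_0(8)/J_0(1)$) and the triangle inequality then forces $\|a_0(x\cdot\nu_0)^+ + b_0(x\cdot\nu_0)^-\|_{L^2(B_8\setminus B_1)}\geq 3/4$ for $\kappa$ small. Truncated linear functions have comparable $L^2$ mass on the sub-annulus $B_5\setminus B_3\subset (B_8\setminus B_1)\cap (B_7(z)\setminus B_1(z))$, so one more triangle inequality against \eqref{e:plane} bounds $a_z+b_z$ from below. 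No sequences, no limits.

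Two corrections to your version. First, scales: the hypothesis only controls $J_0(8)/J_0(1)$, so Theorem~\ref{t:ACF} (rescaled) yields the truncated-linear structure on the annulus $B_8\setminus B_1$, not on $B_1$; your extension ``by unique continuation \ldots\ on all of $B_9$'' is neither available nor needed, and the overlap with $B_7(z_\infty)\setminus B_1(z_\infty)$ already occurs in that outer annulus. Second, and more substantively, the claim $\|u_\infty+v_\infty\|_{L^2(B_8)}=1$ is not justified: Caccioppoli plus Rellich only give strong $L^2$ convergence on compact subsets of $B_8$, and nothing in the hypotheses prevents mass from concentrating near $\partial B_8$ along the sequence. (A bound on the product $J_0^j(8)$ does not help here either, as you partly note.) The repair is precisely the paper's first step: use subharmonicity to push mass onto $B_8\setminus B_1$, then use the closeness from Theorem~\ref{t:ACF} to transfer a lower bound to $\tilde a_j+\tilde b_j$ \emph{before} taking the limit. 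Once that is in place your contradiction closes, but at that point the compactness wrapper is doing no work --- you have essentially rediscovered the paper's direct estimate.
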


\begin{proof}
 Since $u,v$ are both subharmonic (and consequently $u^2,v^2$ are also subharmonic), we have that 
 \[
  \int_{B_8(0) \setminus B_1(0)} u^2  + v^2 \geq \frac{7}{8}. 
 \] 
 From Theorem \ref{t:ACF} we have the existence of $a_0,b_0>0$ and a direction $\nu_0 \in \mathbb{S}^{n-1}$ satisfying the conclusion of Theorem \ref{t:ACF} at the origin. It then follows that for $\kappa$
 chosen small enough we have 
 $\| a_0 (x \cdot \nu)^+  + b_0 (x \cdot \nu)^-\|_{L^2(B_8(0) \setminus B_1(0))} \geq 3/4$. From Theorem \ref{t:ACF} we also have the existence of $a_z,b_z$ and $\nu_z$
 satisfying \eqref{e:plane}. From the structure of affine functions we have that 
 \[
  \int_{B_5(0) \setminus B_3(0)} a_0 (x \cdot \nu)^+  + b_0 (x \cdot \nu)^- \geq c(n) ,
 \]
 and 
 \[
  \int_{B_5(0) \setminus B_3(0)} a_z (x \cdot \nu-z)^+  + b_0 (x \cdot \nu-z)^- \geq c(n) \int_{B_7(z) \setminus B_1(z)} a_z (x \cdot \nu-z)^+  + b_0 (x \cdot \nu-z)^-.
 \]
 Using now the triangle inequality, we conclude that 
 \[
   \int_{B_5(0) \setminus B_3(0)} a_z (x \cdot \nu-z)^+  + b_0 (x \cdot \nu-z)^- \geq c(n)
 \]
 for a new dimensional constant $c(n)$. The conclusion then follows for a new dimensional constant $c_0$. 
 \end{proof}

\subsection{Proof of the $L^2$ Subspace Approximation}

With Lemmas ~\ref{l:qconesplitting} and \ref{l:transfer} in hand, we can now prove Theorem~\ref{t:l2est2}. The key ingredient is the quantitative remainder term for the ACF formula, Theorem~\ref{t:ACF}.
\begin{proof}[Proof of Theorem~\ref{t:l2est2}]
By scaling and translation we may assume without loss of generality that $r=1$ and $x=0$.  We may divide by a positive constant leaving the quotient on the right-hand side of \eqref{e:intest} invariant, so we may assume that
 that 
 $\| u+v \|^2_{L^2(B_8)}=1$. 
 %From subharmonicity and the same reasoning as in Lemma \ref{l:transfer}, we have that $\| u + v \|^2_{L^2(B_8 \setminus B_1)} \geq 7/8$.

We select a good competitor hyperplane $L$ in the definition of $\beta_{\mu}(0,1)$ in the following way. Let 
\[
\bar x = \text{argmin} \left\{ \log\left( \frac{J_x(8)}{J_x(1)}\right)  \ : \ x \in \overline{B}_1(0)\right\}.
\]
This exists because $x \mapsto J_x(r) $ is continuous and $\overline{B}_1(0)$ is compact. By assumption $\log(J_{\bar{x}}(8)/J_{\bar{x}}(1))<\kappa$. 
Notice that  $B_7(\bar x) \subset B_8$, and so our normalization implies that $\int_{B_7(\bar x)} u^2 + v^2 \leq 1$. So, applying Theorem~\ref{t:ACF} on $B_7(\bar x)$, we find a  pair of truncated linear functions $\ell^{\pm}$ supported on complementary half-planes such that 
\begin{equation}
	\label{e: Ann bar x}
  \int_{B_7(\bar x) \setminus B_1(\bar x)} 
  \left(u -\ell^+\right) +\left( v - \ell^-\right)^2\,
  \leq C\log\left(\frac{J_{\bar x}(7)}{J_{\bar x}(1)} \right) \int_{B_7(\bar x)}  u^2 + v^2 \leq C\log\left(\frac{J_{\bar x }(8)}{J_{\bar x }(1)} \right). 
\end{equation}
In the final inequality we also used the monotonicity of the ACF formula.  Let 
$
L = \{ \ell^{\pm}  =0\}
$
be the hyperplane interface between the supports of $\ell^{\pm}$.
By the same reasoning, for each $z \in \text{spt} \mu \cap B_1(0)$, we apply  Theorem~\ref{t:ACF} to obtain a pair of truncated linear functions 
$\ell^{\pm}_z(x)$ supported on complementary half-planes
such that 
%$\ell^+_z(x) = \aone_z((x-y_z)\cdot \nu_z)^+$ and $\ell^-_z(x) = \atwo_z((x-y_z)\cdot \nu_z)^-$ with $\aone_z,\atwo_z>0$ and $\nu_z \in \mathbb{S}^{n-1}$ such that
  \begin{equation}
  	\label{e: Ann z}
  \int_{B_7(z) \setminus B_1(z)} 
  %(u_1 - \alpha_{1,i} ((x-y_i) \cdot \nu_i)^+)^2 + (u_2 - \alpha_{2,i} ((x-y_i) \cdot \nu)^-)^2 
  \left(u -\ell^+_z\right) +\left( v - \ell^-_z\right)^2\,
  \leq C\log\left(\frac{J_{z}(7)}{J_{z}(1)} \right) \int_{B_7(z)}  u^2 + v^2 \leq C\log\left(\frac{J_{z}(8)}{J_{z}(1)} \right). 
 \end{equation}
Now, since the domains of integration in \eqref{e: Ann bar x} and \eqref{e: Ann z} both contain the annulus $B_6\setminus B_2$, we use the triangle inequality and the choice of $\bar x$ to deduce that 
 \begin{equation}
 	\label{eqn: difference in linears}
 \begin{aligned}
  \int_{B_6 \setminus B_2} \left|\left( \ell^+ + \ell^-\right)  -\left(  \ell_z^+ +\ell_z^-\right)\right|^2  &\leq 2 \int_{B_6 \setminus B_2} (\ell^+-u)^2 + (\ell^- -v)^2 + (\ell^+_z -u)^2 +(\ell_z^- -v)^2  \\
  & \leq C \left(\log\left(\frac{J_{\bar{x}}(8)}{J_{\bar{x}}(1)} \right)+ \log\left(\frac{J_{z}(8)}{J_{z}(1)} \right) \right) \leq C  \log\left(\frac{J_{z}(8)}{J_{z}(1)} \right)%\int_{B_8} u_1^2 + u_2^2 \\
 \end{aligned}
  \end{equation}
  for any $z \in B_1(0)$.
  Now, let us split the support of $\mu$ into two pieces,  letting
\begin{align*}
	G_\mu& = \{z \in \text{spt } \mu \cap B_1(0) : \log (J_z(8)/J_z(1)) \leq \kappa \} \,, \\
	A_\mu &= \{z \in \text{spt } \mu \cap B_1(0): \log (J_z(8)/J_z(1)) >\kappa\}
\end{align*}
  If $z \in G_\mu$, then from Lemma \ref{l:transfer} and \eqref{e: Ann z}
  we have that $\aone_z+ \atwo_z\geq c_0$, where $\aone_z$ and $\atwo_z$ are the slopes of $\ell_z^+$ and $\ell_z^-$ respectively.
So, we can apply Lemma \ref{l:qconesplitting}; together with \eqref{eqn: difference in linears} this tells us that $d(z, L)^2  \leq C\, {\log(J_{z}(8) / J_{z}(1))}$ for any $z \in G_\mu$. Integrating this inequality over $G_\mu$ with respect to the measure $\mu$, we have 
\begin{equation}
	\label{eqn: gmu estimate}
%	\footnote{We may need the better bound on $B_{3/4}$ because we are controlling the $L^2$ norm on $B_8(0)$ but not $B_8(z)$.}
\int_{G_\mu} d(z, L)^2 \, d \mu(z) \leq C \int_{G_\mu} \log\left(\frac{J_{z}(8) }{ J_{z}(1)}\right) \, d\mu(z).
\end{equation}
On $A_\mu$, the analogous estimate holds for trivial reasons: 
for any $z \in B_1(0)$, $d (z, L) \leq C_n$, and thus $d (z, L) \leq \frac{C_n \kappa}{\kappa} \leq \frac{C_n}{\kappa} {\log(J_{z}(8) / J_{z}(1))}$ for any $z \in A_\mu$. Integrating over $A_\mu$ with respect to $\mu$ tells us that 
\begin{equation}
	\label{eqn: amu estimate}
\int_{A_\mu} d(z, L)^2 \, d \mu(z) \leq C \int_{A_\mu} \log\left(\frac{J_{z}(8) }{ J_{z}(1)}\right) \, d\mu(z).
\end{equation}
The conclusion of the theorem follows by summing up \eqref{eqn: gmu estimate} and \eqref{eqn: amu estimate} and using the definition of $\beta_\mu(0,1)$:
 \[
 \begin{aligned}
  \beta_{\mu}(0,1)^2 &\leq \int_{B_1(0)} d(z, L)^2 d \mu(z) 
  % &=  \int_{G_\mu } d(z, L)^2 d \mu(z) +  \int_{A_\mu } d(z, L)^2 d \mu(z)  
%  &\leq C \int_{G_\mu} \log\left(\frac{J_{z}(8) }{ J_{z}(1)}\right) \, d\mu(z) + C \int_{A_\mu} \log\left(\frac{J_{z}(8) }{ J_{z}(1)}\right) \, d\mu(z)\\
  & \leq C \int_{B_1(0)} \log\left(\frac{J_{z}(8) }{ J_{z}(1)}\right) \, d\mu(z)\,.
   \end{aligned}
 \]
 This completes the proof.
\end{proof}

\subsection{Packing Estimates}
Next, we apply the $L^2$ estimate of Theorem~\ref{t:l2est2} to prove Proposition~\ref{lem: packing estimate for disjoint balls with small energy drop}.
The proof is an adaptation to our setting of a by-now standard  induction argument using the $L^2$ subspace approximation and the Discrete Reifenberg Theorem.
\begin{proof}[Proof of Proposition~\ref{lem: packing estimate for disjoint balls with small energy drop}]
{\it Step 1: } Let $\eta \in (0,1)$ be a fixed number depending on $n$ to be specified later in the proof, and set $\etaone =\eta\, \e/2$. 
 Basic algebra shows that any $p$ and $r_p$ satisfying \eqref{e:assume} will also satisfy
  \begin{equation} \label{e:assume2}
  \log\left(\frac{J_p(2)}{J_p(\eta\, r_p)} \right) \leq \eta, \qquad p \in \str_{\epsilon, R} \cap B_1(0), \qquad R \leq r_p \leq 1, 
 \end{equation}
%
%
% 
%\begin{equation}
%	\label{e: conseq of assume}
%	\log\left(\frac{J_p(2)}{J_p( \eta \, r_p)}  \right) \leq \eta\,.
%\end{equation}
Indeed, our choice guarantees that $\bar J - \etaone \geq 2/\e = \eta/\etaone$, and so from the definitions of $\bar J$ and $\str_{\e, R}$, 
\[
\frac{J_p(2)}{J_p(\eta \, r_p)} \ \leq \ \frac{J_p(2)}{J_p(\etaone \, r_p)}\  \leq\  \frac{\bar J}{   \bar J - \etaone } \ = \ 1+\frac{ \etaone}{\bar J -\etaone} \leq 1 + \eta;
\]
then the first part of \eqref{e:assume2} follows from the concavity of the logarithm.
The second and third parts of \eqref{e:assume2} are the same as \eqref{e:assume} and thus hold by assumption. So, to establish the proposition, it suffices to show there exists $\eta(n)$ such that \eqref{eqn: packing conclusion} holds for any collection of disjoint balls $\{B_{2r_p}(p)\}_p$ satisfying
\eqref{e:assume2}.\\

{\it Step 2:}
 Let  $\kappa>0$ be chosen according to Theorem \ref{t:l2est2}, and let $\eta \leq \kappa$. We let $r_i = 2^{-i}$. For each integer $i \in \mathbb{N}$, define the packing measure
 \begin{equation*} \label{e:packingmu}
  \mu_i = \sum_{r_p \leq r_i} r_p^{n-1} \delta_p\,,
 \end{equation*}
and let $\beta_i(x,r) := \beta_{\mu_i}(x,r)$ denote  the corresponding $(n-1)$-dimensional $L^2$ Jones beta number as defined in \eqref{eqn: beta numbers}. 
In this notation, the conclusion \eqref{eqn: packing conclusion} of the lemma  is $\mu_0(B_1(0))\leq c(n)$. In order to prove this, we will argue inductively to prove that
	\[
	\int_0^{2r} \int_{B_r(x)} \beta_\mu(z,s)^2 \, d \mu(z) \frac{ds}{s} \leq \delta_0\, r^{n-1}
	\]
	for all $x \in B_1(0)$ and all $r \in (0,1]$, at which point we can apply Theorem~\ref{thm: discrete reifenberg}. 
% $\mu_i(B_{r_i}(x)) \leq C(n) r_i^{n-1}$  for each $i\geq 0$ and $x \in B_2(0)$; the case $i=0$ and $x =0$ is the desired conclusion. 
More specifically, we argue by induction to show that, for $r_i \leq 2^{-4}$
\begin{equation} \label{e:induct}
 \sum_{r_j \leq 2r_i} \int_{B_{2r_i}(x)} \beta_i(z,r_j)^2 d \mu_i(z) \leq \delta_{0}\, r_i^{n-1} \quad \text { for all } x \in B_1(0).  \tag{$*_i$}
\end{equation}
By the Discrete Reifenberg Theorem~\ref{thm: discrete reifenberg}, whenever \eqref{e:induct} holds we have
\begin{equation} \label{e:induct2}
 \mu_i(B_{r_i}) \leq C_0 r_i^{n-1} \quad \text{ for all } x \in B_1(0). 
\end{equation}
Here $\delta_0$ and $C_0$ are the dimensional constants from Theorem~\ref{thm: discrete reifenberg}.
Notice that \eqref{e:induct} vacuously holds for $i$ large enough such that $r_i <R$, as in this case $\mu_i \equiv 0$. 
Also, if $x \in \text{spt} \mu_j$ and $j \geq i$, then by disjointness we have
 \begin{equation}  \label{e:ij}
  \beta_i(x,r_j) = 
  \begin{cases}\ 
   \beta_j(x,r_j) \quad &  \text{ if } x \in \text{spt} \mu_j, \\
 \   0 \quad  &\text{ otherwise}. 
  \end{cases}
 \end{equation}
Thanks to \eqref{e:assume2}, we can apply Theorem~\ref{t:l2est2} to $\mu_i$ whenever $r_i < 2^{-4}$, finding that for any $x $ that is a center of a ball in our collection, we have 
\begin{equation} \label{e:l2est1}
 \beta_i(x,r_i)^2 \leq \frac{C}{r_i^{n-1}} \int_{B_{r_i}(x)} \log\left(\frac{J(8r_i,y)}{J(r_i,y)}\right) d \mu_i(y)\,.
\end{equation}
Suppose now that the inductive hypothesis $(*_j)$ holds for all
$j \geq i+1$. Fix $x \in B_1(0)$. We first claim that 
\begin{equation} \label{e:packest}
 \mu_{i-1}(B_{4r_i}(x)) \leq M r_i^{n-1}  \quad \text{ and for all } x \in B_1(0),
\end{equation}
where $M= C(n) C_{0}$ with $C(n)$ a dimensional constant. To prove this claim, note that we have $\mu_{i-1}(B_{4r_i}(x))=\mu_{i+1}(B_{4r_i}(x)) + \sum r_p^{n-1}$ where we sum
over $p \in B_{4r_i}(x)$ having $r_{i+1}<r_p\leq r_{i-1}$. Since the $B_{2 r_p}(p)$ are disjoint we sum over at most $C(n)$ points. Also, there are at most $C(n)$ points in $B_{4r_i}(x)$
that are pairwise $r_{i+1}$ distant from each other, so we can cover $B_{4r_i}(x)$ with $C(n)$ balls with center at points $p$. We then use our induction hypothesis with $i+1$ and \eqref{e:induct2} to conclude that 
\[
\mu_{i+1}(B_{4r_i}(x)) \leq \sum \mu_{i+1}(B_{r_{i+1}}(y)) \leq C(n) C_{0}(n) r_{i+1}^{n-1},
\]
which finishes the claim \eqref{e:packest}.
%By the same simple packing argument as in (??) we can assume 
%\begin{equation} \label{e:packest}
% \mu_j(B_{4r_j}(x)) \leq M r_j^{n-1} \quad \text{ for all } j \geq i-2, \quad \text{ and for all } x \in B_1(0),
%\end{equation}
%where $M=c(n)C_{DR}$. 
Now, for any $j \geq i-1,$ i.e. for $r_j \leq 2 r_i$, we have by \eqref{e:ij}, \eqref{e:l2est1}, and Fubini respectively that 
\begin{align*}
 \int_{B_{2r_i}(x)} \beta_i(z,r_j)^2 d \mu_i(z)= \int_{B_{2r_i}(x)} \beta_j(z,r_j)^2 d \mu_j(z)
 & \leq \frac{C}{r_j^{n-1}}   \int_{B_{2r_i}(x)} \int_{B_{r_j}(z)}  \log\left(\frac{J_y(8r_j)}{J_y(r_j)}\right) d \mu_j(y) d \mu_j(z) \\
 & \leq \frac{C}{r_j^{n-1}}  \int_{B_{2r_i+r_j}(x)} \!\! {\mu_j (B_{r_j}(y))}\log\left(\frac{J_y(8r_j)}{J_y(r_j)}\right)  d \mu_j(y)\,.
\end{align*}
Now, applying \eqref{e:packest} and the ordering of the measures we have $\mu_j (B_{r_j}(y))/r_j^{n-1} \leq M$. So, summing over the expression above over all $r_j \leq 2r_i$ and  recalling \eqref{e:assume2}, we find that
\begin{align*}
	\sum_{r_j \leq 2r_i} \int_{B_{2r_i}} \beta_i(z,r_j)^2 d \mu_i(z) 
	&\leq CM  \sum_{r_j \leq 2 r_i}  \int_{B_{4r_i}(x)}\log\left(\frac{J_y(8r_j)}{J_y(r_j)}\right)\, d\mu_j(y)\\
	&\leq CM   \int_{B_{4r_i}(x)}  \sum_{r_j \leq 2 r_i}\log\left(\frac{J_y(8r_j)}{J_y(r_j)}\right)\, d\mu_{i-1}(y)\\
	&\leq CM   \int_{B_{4r_i}(x)} \log\left(\frac{J_p(2)}{J_p(r_p)}\right)\, d\mu_{i-1}(y)
	\leq CM \eta\, \mu_{i-1}(B_{4r_i}(x)).
%	 &\leq CM\!\!\!\!\sum_{p \in B_{4r_i}(x) \cap \text{spt} \mu_{i-1}}\!\!\!\! r_p^{n-1} \log\left(\frac{J_p(2)}{J_p(r_p)}\right) \,.
\end{align*}
Applying \eqref{e:packest} again and then choosing $\eta$ small enough, this establishes
%\[
% \sum_{r_j \leq 2r_i} \int_{B_{2r_i}(x)} \beta_i(z,r_j)^2 d \mu_i(z) \leq \delta_0 r_i^{n-1}. 
%\]
 the induction step and so \eqref{e:induct} holds for all $i$. Now, recalling \eqref{e:ij}, from \eqref{e:induct} we find that for all $x \in B_1(0)$ and all $i \in \mathbb{N}$,
\[
 \sum_{r_j \leq 2r_i} \int_{B_{2r_i}(x)} \beta_\mu(z,r_j)^2 d \mu(z) \leq \delta_0 \, r_i^{n-1}.
\]
In particular, we can apply the Discrete Reifenberg Theorem~\ref{thm: discrete reifenberg} to $\mu$ and conclude. 
\end{proof}

%We now have the following 
%\[
%\begin{aligned}
% &\sum_{r_j \leq 2r_i} \int_{B_{2r_i}} \beta_i(z,r_j)^2 d \mu_i(z)  \\
% &= \sum_{r_j \leq 2r_i} \int_{B_{2r_i}} \beta_j(z,r_j)^2 d \mu_j(z)  \\
% &\leq c \sum_{r_j \leq 2r_i} \frac{1}{r_j^{n-1}}  \int_{B_{2r_i}(x)} \int_{B_{r_j}(z)}  \log\left(\frac{J(8r_i,y)}{J(r_i,y)}\right) d \mu_j(y) d \mu_j(z)  \\
% &\leq c \sum_{r_j \leq 2r_i} \frac{1}{r_j^{n-1}}  \int_{B_{2r_i+r_j}(x)} \int_{B_{r_j}(z)} \frac{\mu_j (B_{r_j}(y))}{r_j^{n-1}} \log\left(\frac{J(8r_i,y)}{J(r_i,y)}\right)  d \mu_j(y)  \\
% &\leq cM \int_{B_{4r_i}(x)} \sum_{r_j \leq 2 r_i}  \log\left(\frac{J(8r_i,y)}{J(r_i,y)}\right) \\
% &\leq CM \left(\sum_{p \in B_{4r_i}(x) \cap \text{spt} \mu_i} r_p^{n-1} \log\left(\frac{J(2,p)}{J(r_p)}\right) \right) \\
% &\leq CM \eta \mu_i(B_{4r_i}(x)) \\
% &\leq CM^2 \eta r_i^{n-1}. 
% \end{aligned}
%\]

\section{The Dichotomy}
In this section, we establish a key dichotomy: either all points in $\str_{\e, \eta}$ have small drop in the ACF monotonicity formula down to a small scale, or else all such points have a definite drop in their ACF formula at a smaller scale. This is simpler than the analogous dichotomy in other settings: a typical statement would say that either all points have small energy drop, or else the set of points with small energy drop looks lower dimensional in a quantitative sense. The reason behind this difference is that all blowup configurations for the ACF monotonicity formula have the same number of symmetries. The main form of the dichotomy is the following proposition:
\begin{proposition}\label{prop: key dichotomy}
	Fix an integer $n\geq 2$ and positive constants $\e , \bareta, \barrho, $ and $ \bar{J}_0$. There exists $\eta$ depending on these parameters such that the following holds. 
	Fix $r \in (0,2]$, 
	let $u, v :B_{10}(0)\to \R$ be nonnegative continuous functions satisfying \eqref{eqn: ACF setup} with $x_0 \in \Gamma^*\cap B_2(0)$
	 and  $\sup_{x \in B_{4r}(x_0)} J_x(4) \leq \bar J \leq \bar{J}_0$. Then at least one of the two possibilities occurs: 
	\begin{enumerate}
		\item For all $x \in \str_{\e, \eta r} \cap B_{2r}(x_0)$,  we have $J_x(\barrho\, r) > \bar J - \bareta$, or 
		\item For all $x \in \str_{\e, \eta r} \cap B_{2r}(x_0)$,   we have  $J_x (4 \eta\,r ) \leq  \bar J - \eta$.
	\end{enumerate}
\end{proposition}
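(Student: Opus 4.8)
The plan is to argue by contradiction and compactness, exploiting the near-rigidity in the ACF formula provided by Theorem~\ref{t:ACF}. By the scaling identities in Section~\ref{ssec: notation} (which preserve the \emph{values} of $J$, hence the constants $\bar J_0,\barrho,\bareta,\epsilon$), I would first reduce to $r=1$. If the statement fails, then for $\eta_k\downarrow0$ there are admissible pairs $(u_k,v_k)$ on $B_{10}(0)$, points $x_0^k\in\str(u_k,v_k)\cap B_2(0)$ with $\sup_{B_4(x_0^k)}J_\cdot(4;u_k,v_k)\le\bar J_k\le\bar J_0$, and points $p_k,q_k\in\str_{\epsilon,\eta_k}(u_k,v_k)\cap B_2(x_0^k)$ such that $J_{p_k}(4\eta_k;u_k,v_k)>\bar J_k-\eta_k$ (this negates alternative~(2) at $p_k$) while $J_{q_k}(\barrho;u_k,v_k)\le\bar J_k-\bareta$ (this negates~(1) at $q_k$). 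Translating so $p_k=0$ and passing to a subsequence, I may assume $\bar J_k\to\bar J_\infty$ and $q_k\to\xi$; monotonicity together with $0=p_k\in\str_{\epsilon,\eta_k}$ gives $\bar J_k\ge J_0(4;u_k,v_k)\ge J_0(\eta_k;u_k,v_k)\ge\epsilon$, so $\bar J_\infty\in[\epsilon,\bar J_0]$.

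The first main step is to extract a limiting \emph{complementary truncated linear} configuration. From $J_{p_k}(4\eta_k;u_k,v_k)>\bar J_k-\eta_k$ and $J_{p_k}(4;u_k,v_k)\le\bar J_k$, monotonicity forces $J_0(s;u_k,v_k)\in(\bar J_k-\eta_k,\bar J_k]$ for all $s\in[4\eta_k,4]$, so every logarithmic ratio $\log\big(J_0(s_2;u_k,v_k)/J_0(s_1;u_k,v_k)\big)$ with $0<s_1\le s_2\le4$ tends to $0$. Applying Theorem~\ref{t:ACF} (in the normalization-free form of Corollary~\ref{cor: coeff 1}, after replacing $v_k$ by $(\aone_k/\atwo_k)v_k$ to equalize slopes) at the center $0$ with inner scale $\eta_k$ produces slopes $\aone_k,\atwo_k>0$ and directions $\nu_k$ so that $U_k:=u_k/\aone_k$ and $V_k:=v_k/\atwo_k$ satisfy $\int_{B_3(0)\setminus B_{4\eta_k}(0)}(U_k-(z\cdot\nu_k)^+)^2+(V_k-(z\cdot\nu_k)^-)^2\to0$. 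Along a further subsequence $\nu_k\to\nu_\infty$, and (absorbing the tiny excised ball using subharmonicity) $U_k\to(z\cdot\nu_\infty)^+$, $V_k\to(z\cdot\nu_\infty)^-$ in $L^2(B_3(0))$ and weakly in $W^{1,2}_{\mathrm{loc}}(B_3(0))$. Since the limits are harmonic where positive, Lemma~\ref{lem: improved convergence} upgrades to strong $W^{1,2}_{\mathrm{loc}}$ convergence and gives convergence of the weighted energies, so that $J_0(s;U_k,V_k)\to c_*$ for small $s$ (as in Corollary~\ref{cor: J convergence}); combined with $J_0(s;u_k,v_k)=\aone_k^2\atwo_k^2\,J_0(s;U_k,V_k)\to\bar J_\infty$ this yields $\aone_k^2\atwo_k^2\to\bar J_\infty/c_*\in(0,\infty)$.

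The second step propagates this linear structure away from $p_k$. The hyperplane $L_\infty=\{z\cdot\nu_\infty=0\}$ contains $p_k=0$; for any $y\in L_\infty\cap B_2(0)$ the bound $J_y(4;U_k,V_k)\le\bar J_k/(\aone_k^2\atwo_k^2)\to c_*$ together with the lower bound $\liminf_k J_y(s;U_k,V_k)\ge c_*$ (monotonicity plus Step~1) shows $J_y(s;U_k,V_k)\to c_*$, so the drop at $y$ is again $o(1)$; re-applying Theorem~\ref{t:ACF} at such $y$ and matching the new approximating pair with $(z\cdot\nu_\infty)^\pm$ on the overlap extends the convergence to a neighborhood of $L_\infty\cap B_2(0)$, and iterating finitely many times propagates it to the whole domain. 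Finally, to locate $\xi$: since $q_k\in\str_{\epsilon,\eta_k}$, $J_{q_k}(\eta_k;U_k,V_k)\ge\epsilon'=\epsilon'(\epsilon,\bar J_0)>0$, so by monotonicity, the upper semicontinuity~\eqref{eqn: USC}, and the convergence of $(U_k,V_k)$ recentered at $q_k$ (again using Lemma~\ref{lem: improved convergence}, the recentered limit still being harmonic where positive) one gets $J_\xi(0^+;(z\cdot\nu_\infty)^+,(z\cdot\nu_\infty)^-)\ge\epsilon'>0$; as a complementary truncated linear pair has vanishing ACF limit at any center off its interface, $\xi\in L_\infty$. Then the same continuity gives $J_{q_k}(\barrho;U_k,V_k)\to J_\xi(\barrho;(z\cdot\nu_\infty)^+,(z\cdot\nu_\infty)^-)=c_*$, hence $J_{q_k}(\barrho;u_k,v_k)=\aone_k^2\atwo_k^2\,J_{q_k}(\barrho;U_k,V_k)\to\bar J_\infty=\lim_k\bar J_k$, contradicting $J_{q_k}(\barrho;u_k,v_k)\le\bar J_k-\bareta$.

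The hardest part is the compactness together with the bookkeeping of radii: Theorem~\ref{t:ACF} only delivers the linear approximation on a ball of fixed dimensional proportion of the scale on which $J$ is pinched at $p_k$, whereas $q_k$ may lie several such radii away and the self-normalized functions $U_k=u_k/\aone_k$, $V_k=v_k/\atwo_k$ need not have a priori $L^2$ bounds beyond that ball. This is exactly why one must propagate the linear structure along the interface --- a unique-continuation feature of the ACF formula --- rather than hoping to reach $q_k$ in a single application, and why one works throughout with $U_k,V_k$ (whose norms are automatically controlled near $p_k$) rather than with $u_k,v_k$. Verifying that each translated or recentered limit is still ``harmonic where positive'', so that Lemma~\ref{lem: improved convergence} and hence convergence of $J$ at moving centers apply, is the remaining technical point; everything else is bookkeeping with the monotonicity of $J$.
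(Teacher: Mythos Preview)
Your argument is essentially the paper's: the paper first isolates an intermediate Lemma (Lemma~\ref{lemma: pre dichotomy}) proved by exactly the contradiction/compactness scheme you describe---pinched $J$ at a point $y$ forces, via Theorem~\ref{t:ACF} and Corollary~\ref{cor: coeff 1}, $L^2$ convergence of (renormalized) $u_k,v_k$ to a complementary truncated linear pair, and then Corollary~\ref{cor: J convergence} transfers this to convergence of $J$ at any nearby point on the limiting hyperplane, yielding both the $\barrho$--scale lower bound and the containment $\str_{\e,\eta}\subset B_\tau(L)$---and then deduces the proposition in two lines. You simply merge the lemma and the proposition into a single contradiction argument.

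The one substantive difference is your propagation step. The paper does not iterate: it applies Theorem~\ref{t:ACF} once at $y_j$ at a fixed scale ($\approx 4$) and claims the resulting ball already contains $B_{\barrho}(x_0)$ for the bad point $x_0$. Your concern that a single application may not reach $q_k$ is legitimate bookkeeping---after reducing to $r=1$, $x_0=0$, both $p_k$ and $q_k$ lie in $B_2(0)$, so $|p_k-q_k|$ can approach $4$, and one must check $B_{\barrho}(q_0)$ fits inside the ball on which convergence is established---but the fix is much lighter than a unique-continuation-style propagation along $L_\infty$: one simply uses that $u,v$ are defined on $B_{10}$ and applies Theorem~\ref{t:ACF} at $p_k$ at scale $4$ (or adjusts the harmless constants $10,4,2$ in the statement by $O(\barrho)$) so that a single application already covers $\overline{B}_4(p_0)\supset B_{\barrho}(q_0)$. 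Your argument is correct, but the propagation machinery you flag as ``the hardest part'' is avoidable.
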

Proposition~\ref{prop: key dichotomy} is a  direct consequence of the following lemma, which in turn is based on compactness and the quantitative estimates of Theorem~\ref{t:ACF} and Corollary~\ref{cor: coeff 1}.
\begin{lemma} \label{lemma: pre dichotomy}
	Fix $ n \geq 2$ and positive constants $\e , \bareta, \barrho$ and $ \bar{J}_0$. 
	There exist $\eta>0$ and $\tau>0$ depending on these parameters such that the following holds. 
	Suppose $u, v :B_{8}(0)\to \R$ are nonnegative continuous functions satisfying \eqref{eqn: ACF setup}   with $\sup_{x \in B_4(0)} J_x(4) \leq \bar J \leq \bar{J}_0$. 
	If there is some point $y \in B_2(0)$ such that
	\begin{equation}\label{eqn: tiny j drop1}
	 J_y(4\eta ) \geq \bar{J} - \eta,
	\end{equation}
then there is an affine hyperplane $L$ containing $y$ such that for every $x \in B_\tau(L) \cap B_2(0)$, we have
	\begin{equation}\label{eqn: definite drop}
 J_x( \barrho) > \bar{J} - \bareta \qquad\text{ and } \qquad \str_{\e, \eta } \cap B_2(0) \subset B_\tau(L).
	\end{equation}
	In particular, $J_x(\barrho) >\bar J -\bareta$ for all $x \in \str_{\e, \eta} \cap B_2(0)$.
\end{lemma}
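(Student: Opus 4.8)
The plan is to prove Lemma~\ref{lemma: pre dichotomy} by a contradiction/compactness argument: from a hypothetical sequence of counterexamples I would extract a limiting pair of complementary truncated linear functions and then use the rigidity of that limit to verify \eqref{eqn: definite drop}. Before starting I would make three reductions. If $\bar J<\bareta$, the first inequality in \eqref{eqn: definite drop} holds at every point for any hyperplane since $J\ge 0$; if $\bar J<\e$, then $\str_{\e,\eta}\cap B_2(0)=\varnothing$ because $x\in\str_{\e,\eta}$ forces $J_x(4)\ge J_x(\eta)\ge\e$. So one may assume $\bar J\ge\bar J_{\min}:=\min\{\e,\bareta\}$. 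Since $r\mapsto J_x(r)$ is nondecreasing, the conclusion for a given $\barrho$ follows from that for a smaller value, so one may also assume $\barrho\le1$. Finally, $J$, $\bar J$, and $\str_{\e,\eta}$ are all invariant under $(u,v)\mapsto(\lambda u,\lambda^{-1}v)$, so the pair may be rebalanced freely.

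Fix $\tau>0$, to be pinned down later from $\bareta,\barrho,\bar J_0$. If the lemma failed for this $\tau$ and every $\eta>0$, there would be $\eta_k\downarrow 0$, pairs $u_k,v_k$ on $B_8(0)$ satisfying \eqref{eqn: ACF setup}, numbers $\bar J_k\in[\bar J_{\min},\bar J_0]$ with $\sup_{B_4}J_x(4;u_k,v_k)\le\bar J_k$, and points $y_k\in B_2(0)$ with \eqref{eqn: tiny j drop1} (for $\eta=\eta_k$), for which \eqref{eqn: definite drop} fails for every affine hyperplane through $y_k$. By monotonicity and the $\sup$ bound, $\log\!\big(J_{y_k}(3;u_k,v_k)/J_{y_k}(3\rho;u_k,v_k)\big)\le\log\!\big(\bar J_k/(\bar J_k-\eta_k)\big)\to 0$ for each fixed $\rho\in(0,1)$. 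After balancing $(u_k,v_k)$ so that the slopes furnished by Theorem~\ref{t:ACF} at $y_k$ coincide, Corollary~\ref{cor: coeff 1} (in rescaled form at $y_k$) yields $\mu_k>0$ and $\nu_k\in\mathbb{S}^{n-1}$ with
\[
\int_{B_3(y_k)\setminus B_{3\rho}(y_k)}\Big(\tfrac{u_k}{\mu_k}-((x-y_k)\!\cdot\!\nu_k)^+\Big)^2+\Big(\tfrac{v_k}{\mu_k}-((x-y_k)\!\cdot\!\nu_k)^-\Big)^2\;\le\;C\log\frac{J_{y_k}(3;u_k,v_k)}{J_{y_k}(3\rho;u_k,v_k)}\;\longrightarrow\;0,
\]
with \emph{no} $\|u_k+v_k\|_{L^2}^2$ factor on the right --- which is exactly why the coefficient-normalized Corollary~\ref{cor: coeff 1} is the right tool here. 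The $J$-bounds force $\mu_k$ into a fixed compact subinterval of $(0,\infty)$, since $c_*\mu_k^4$ differs from $J_{y_k}(3\rho;u_k,v_k)\in[\bar J_{\min}/2,\bar J_0]$ by a vanishing error; with standard subsolution estimates this gives uniform local $W^{1,2}$ bounds. Passing to a subsequence, $y_k\to y_\infty\in\overline{B_2(0)}$, $\nu_k\to\nu_\infty$, $\mu_k\to\mu_\infty>0$, $\bar J_k\to\bar J_\infty$; absorbing the deleted ball via subharmonicity of $u_k^2,v_k^2$ and letting $\rho\to 0$, one gets $u_k\to\ell_+:=\mu_\infty((x-y_\infty)\!\cdot\!\nu_\infty)^+$ and $v_k\to\ell_-:=\mu_\infty((x-y_\infty)\!\cdot\!\nu_\infty)^-$ strongly in $L^2$ on a neighborhood of $\overline{B_2(0)}$, and Corollary~\ref{cor: J convergence} gives $\bar J_\infty=c_*\mu_\infty^4>0$. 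Set $L_\infty:=\{(x-y_\infty)\!\cdot\!\nu_\infty=0\}$ and $L_k:=L_\infty+(y_k-y_\infty)$, so $y_k\in L_k$ and $L_k\to L_\infty$.

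To close the contradiction I would show that for large $k$ the hyperplane $L=L_k$ satisfies \eqref{eqn: definite drop} with $\eta=\eta_k$. For the bound $J_z(\barrho)>\bar J_k-\bareta$ on $B_\tau(L_k)\cap B_2(0)$: such $z$ have $d(z,L_\infty)\le 2\tau$ for large $k$; since $\ell_\pm$ is harmonic where positive, Lemma~\ref{lem: improved convergence} (applied on $B_{\barrho}(z)$) gives $J_z(\barrho;u_k,v_k)\to J_z(\barrho;\ell_+,\ell_-)$ uniformly in $z$, and $J_z(\barrho;\ell_+,\ell_-)=c_*\mu_\infty^4\,g\big(d(z,L_\infty)/\barrho\big)$ for a continuous $g$ with $g(0)=1$ independent of $\mu_\infty$, so choosing $\tau$ small forces $J_z(\barrho;\ell_+,\ell_-)\ge\bar J_\infty-\tfrac{\bareta}{2}$ and hence $J_z(\barrho;u_k,v_k)>\bar J_k-\bareta$ for large $k$. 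For the inclusion $\str_{\e,\eta_k}\cap B_2(0)\subset B_\tau(L_k)$: if some $x_k\in\str_{\e,\eta_k}\cap B_2(0)$ had $d(x_k,L_k)>\tau$, then along a subsequence $x_k\to x_\infty$ with $d(x_\infty,L_\infty)\ge\tau/2$, so on a small fixed ball about $x_\infty$ one of $\ell_+,\ell_-$ vanishes identically; Lemma~\ref{lem: improved convergence} applied to the corresponding translated sequence with this trivial limit gives $J_{x_k}(\rho';u_k,v_k)\to 0$ for a fixed $\rho'$, whereas $x_k\in\str_{\e,\eta_k}$ and monotonicity give $J_{x_k}(\rho';u_k,v_k)\ge\e$ once $\eta_k<\rho'$ --- a contradiction.

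The main obstacle I expect is the middle step: upgrading the weak hypothesis \eqref{eqn: tiny j drop1} to a genuinely \emph{nondegenerate} complementary truncated-linear $L^2$-limit on a ball large enough to feed both conclusions. This is precisely where the sharp quadratic-type remainder $\log(J(1)/J(\rho))$ of Theorem~\ref{t:ACF}, in the coefficient-normalized form of Corollary~\ref{cor: coeff 1}, is indispensable (plain monotonicity would not suffice), along with the slope-balancing reduction, subharmonicity to reach the center $y_k$, and Corollary~\ref{cor: J convergence} to certify $\bar J_\infty=c_*\mu_\infty^4>0$ and thereby exclude a degenerate limit. The two concluding steps are then essentially routine applications of Lemma~\ref{lem: improved convergence} and the explicit value of the ACF functional on truncated linear pairs; and Proposition~\ref{prop: key dichotomy} follows from Lemma~\ref{lemma: pre dichotomy} by rescaling together with the obvious dichotomy on whether some point of $\str_{\e,\eta r}\cap B_{2r}(x_0)$ realizes \eqref{eqn: tiny j drop1}.
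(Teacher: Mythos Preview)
Your proposal is correct and follows essentially the same compactness/contradiction strategy as the paper: extract a sequence violating the lemma, use the slope-balancing reduction together with the coefficient-normalized stability estimate (Corollary~\ref{cor: coeff 1}) to obtain $L^2$ convergence to a nondegenerate complementary truncated-linear pair, identify the limiting slope via Corollary~\ref{cor: J convergence}, and then contradict both conclusions of \eqref{eqn: definite drop} using Lemma~\ref{lem: improved convergence} at the bad points. The organizational differences are minor: you fix $\tau$ in advance (determined from the continuous scaling function $g$ of the ACF functional on linear pairs) and then run a single contradiction on $\eta$, whereas the paper lets both $\tau_j,\eta_j\to 0$ and treats the two parts of \eqref{eqn: definite drop} in separate contradiction arguments; your ``uniform in $z$'' convergence claim is a shorthand for the same bad-point extraction that the paper carries out explicitly, and your explicit mention of the Caccioppoli-type subsolution estimate to secure local $W^{1,2}$ bounds is a point the paper glosses over when invoking Corollary~\ref{cor: J convergence} on the rescaled pair.
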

\begin{proof}
	We argue by way of contradiction. Suppose the first condition in  \eqref{eqn: definite drop} fails. 
	We may thus find admissible pairs of functions $(u_j, v_{j})$ with $\sup_{x \in B_4(0)} J_x(4;\,u_{j}, \,v_{j}) \leq \bar {J}_j \leq \bar{J}_0$ and  points $y_{j} \in  B_2(0)$ satisfying \eqref{eqn: tiny j drop1} with $\eta_j \to 0$ in place of $\eta$, such that, for a sequence $\tau_j \to 0$ and for every affine hyperplane $L_j$ containing $y_j$,  there are points $x_j$ in $B_2(0) \cap B_{\tau_j}(L_j)$ such that 
		\begin{equation}
		\label{eqn: bad points1}
0 \leq	 J_{x_j}\left(\barrho ; \, u_{j}, \, v_{j}\right) \leq \bar J_j - \bareta\,
	\end{equation}
	Up to a subsequence, $\bar{J}_j \to \bar J \in [\bareta , \bar J_0]$. 
A basic calculation shows that $\log(J_{y_i}(1) / J_{y_i}(4\eta_j)) \leq 4 \eta_j/\bareta$. So, 
by Theorem~\ref{t:ACF}, for each $j$ there exist $\aone_{j}$, $\atwo_{j}>0$, and $\nu_{j} \in \mathbb{S}^{n-1}$ such that 
  \begin{equation}\label{eqn: apply tACFi1}
  \begin{aligned}
  	  \int_{B_4(y_{j} )\setminus B_{2\eta_j}(y_{j})} \left[u_{j} -\aone_{j} \left((x-y_{j})\cdot \nu_{j}\right)^+ \right]^2 &+ \left[v_{j} -\atwo_{j} ((x-y_{j})\cdot \nu_{j})^-\right]^2 \leq C\,\eta_j \int_{B_{4}(y_j)}( u_{j}^2 + v_j^2 ) 
  \end{aligned}
  \end{equation}
  for a constant $C$ depending on $n$ and $\bareta$.
Without loss of generality, we can multiply $u_{j}$ and $v_{j}$ by constants whose product is equal to $1$ so that $\aone_{j} =\atwo_{j}$, since this leaves $J$ unchanged. We may also assume that we have precomposed $u_{j}$ and $v_{j}$ with a rotation so that $\nu_j =e_n$ for all $j$. 

We want to show that $u_{j}$ and $v_{j}$ converge in $L^2$ to a pair of nondegenerate truncated linear functions, and so we must verify that the slopes $a_j$ do not degenerate to zero or blow up to infinity along the sequence. To this end, set $\hat{u}_{j} = u_{j}/a_j$ and $\hat{v}_{j} = v_{j}/a_j$. For $j$ sufficiently large, we have $4\eta_j \leq \max\{ \rho_0, \kappa_0^2/\bar{J}\}$, where $\rho_0$ and $\kappa_0$ are the constants from Corollary~\ref{cor: coeff 1}. So, applying Corollary~\ref{cor: coeff 1}, \eqref{eqn: apply tACFi1} becomes 
 \begin{equation*}\label{eqn: apply tACFi 2}
  \begin{aligned}
  	  \int_{B_4(y_{j} )\setminus B_{2\eta_j}(y_{j})} \left(\hat{u}_{j} -((x-y_{j})\cdot e_n)^+ \right)^2 &+ \left(\hat{v}_{j} - ((x- y_{j})\cdot e_n)^-\right)^2  \leq C\,\eta_j\,,
  \end{aligned}
  \end{equation*}
  for a constant $C$ depending on $n$ and $\bar \eta$.
  Up to a subsequence, 
  $y_j \to y_0\in \overline{B}_2(0)$. So, 
  letting $
 \hat{\ell}_1(x) =  ((x -y_0)\cdot e_n)^+$ and $\hat{\ell}_2(x) =  ((x -y_0)\cdot e_n)^-,$ we see that $\hat{u}_{j} \to \hat{\ell}_+$ and $\hat{v}_{j} \to \hat{\ell}_-$ in $L^2(B_4(y_0))$.
Corollary~\ref{cor: J convergence} then tells us that
\[
\lim_{j\to \infty} \,
\alpha_j^{-4}\, J_{y_0} (4; \, u_{j},\,  v_{j})
= \lim_{j\to \infty}\, J_{y_0} (4; \hat{u}_{j} , \hat{v}_{j}) = J_{y_0}(4; \hat{\ell}_+, \hat{\ell}_-) = c_*\,.
\]
On the other hand, by \eqref{eqn: tiny j drop1} and continuity, we see that $ 
\lim_{j \to \infty} \, J_{y_0}(4; \, u_{j} ,\, u_{j})  = \bar J$, and so it follows that $\lim_{j \to \infty} a_j = a_0:= (\bar{J}/c_*)^{1/4}>0$. 
Now,  set  $ \ell_+= a_0\, \hat{\ell}_+$ and $\ell_- =  a_0\, \hat{\ell}_-$, so
\begin{equation}	\label{eqn: l2 conv of rescaled}
	\int_{B_4(y_0) } (u_{j}- \ell_+)^2 + (v_{j} - \ell_-)^2 \to 0,
\end{equation}
and $J_x(r, \ell_+, \ell_-) = \bar{J}$ for each $r >0$ and $x \in \{ x\cdot e_n = 0\}$.

Now, we aim to reach a contradiction to \eqref{eqn: bad points1}.
For each $j$, choose $x_j$ corresponding to the affine hyperplane $L_j = \{ (x-y_j)\,\cdot\, e_n =0\}$. After passing to a subsequence, $x_j $ converges to a point $x_0 \in \overline{B}_2(0) \cap \{ (x-y_0)\,\cdot \,e_n=0\}$. Since $y_0 \in \overline{B}_2(0)$ as well, we have $B_{r}( x_0) \subset B_4(y_0)$ for any $r \leq 1$. So, thanks to Lemma~\ref{lem: improved convergence} once again, we have
\[
\lim_{j \to \infty} J_{x_j} (r; \, u_{j} , \, v_{j})\to J_{x_0}(r;\, \, \ell_+\,  \ell_-) = \bar{J}
\]
for all $r \in (0,1)$.
 On the other hand, by continuity and monotonicity, \eqref{eqn: bad points1} implies that
 \[
\lim_{j \to \infty} J_{x_j}(r; \, u_{j} , \, v_{j}) \leq \bar{J} - \bareta,
 \]
 for $r \in (0, \barrho)$,
leading us to a contradiction. This establishes the first part of \eqref{eqn: definite drop}.
\medskip

Next, let us prove that the second part of \eqref{eqn: definite drop} holds, with the same $\tau>0$ determined in the proof of \eqref{eqn: definite drop} above and up to further decreasing the parameter $\eta>0$ from the value determined above.
Once again, we argue by way of contradiction and suppose that the second part of \eqref{eqn: definite drop} fails for our fixed choices of $n,\e,\bareta, \barrho, \bar J_0$ and $\tau$. 
We may find  sequences $(u_{j}, v_{j}) , y_j$, and $\eta_j\to 0$ as above, satisfying the hypotheses of the lemma with $\eta_j$ in place of $\eta$, and a sequence of points $x_j$ violating the second part of \eqref{eqn: definite drop},  i.e. $x_j \in \str_{\e, \eta_j} \cap B_2(0) \setminus B_{\tau}(\{ x\cdot e_n = 0\})$.  Repeating the argument above, we find that \eqref{eqn: l2 conv of rescaled} holds and, up to a subsequence, $x_j \to x_0$ with $\text{dist}({x}_0, \{ x \cdot e_n =0\}) \geq \tau.$ Without loss of generality, we can assume that $x_0 \cdot e_n >0$ and thus $x_0\cdot e_n \geq \tau$. By continuity and the assumption that $ x_j \in \str_{\e, \eta_j}$ we have, for any fixed $r \in (0,1)$,
\[
\e \leq \lim_{j \to \infty} J_{x_j}(r\, ; \, u_{j},\,  v_{j}) = J_{x_0} (r\, ; \ell_+, \, \ell_-) \,.
\] 
On the other hand, $J_{x_0}(r\, ; \ell_+\,  , \ell_-) $  for $r \leq \tau/2$ since $\ell_-$ vanishes identically in $B_r(x_0)$, giving us a contradiction. We conclude that  the second part of \eqref{eqn: definite drop} holds.
\end{proof}

Proposition~\ref{prop: key dichotomy} is an easy consequence of Lemma~\ref{lemma: pre dichotomy}.
\begin{proof}[Proof of Proposition~\ref{prop: key dichotomy}]
Let $\eta$ be chosen according to Lemma~\ref{lemma: pre dichotomy}.
 By scaling, it suffices to prove the proposition with $r=1$ and $x_0=0$. Suppose we are not in case (2) in the statement of the proposition, so there is some $y \in \str_{\e ,\eta} \cap B_2(0)$ such that $J_y(4\eta ) > \bar J - \eta.$ Applying Lemma~\ref{lemma: pre dichotomy}, we then find that $J_x(\barrho) > \bar J  -\bar \eta $ for all $x \in \str_{\e,\eta} \cap B_2(0)$, i.e. we are in case (1).
 \end{proof}

The following direct corollary is how Proposition~\ref{prop: key dichotomy} will be applied in the next section.
 \begin{corollary}\label{cor: main app of key dichotomy}
 	Fix an integer $n\geq 2$ and positive constants $ \e, \bar{J}_0, \bareta,$ and $\barrho$. 
 	There exists $\eta$ depending on $n, \e, \bar J_0, \bareta,$ and $\barrho$ such that the following holds. 
 	Fix $R \in (0,1]$. For any $x \in B_2(0)$ and $r \in [\RR,2]$, 
 	let $u, v :B_{10}(0)\to \R$ be nonnegative continuous functions satisfying \eqref{eqn: ACF setup} with  $\sup_{x \in B_{4r}(x_0)} J_x(4) \leq \bar J \leq \bar{J}_0$. 
 	Then at least one of the two possibilities occurs:
 	\begin{enumerate}
 		\item For all $y \in \str_{\e,  \eta \RR}  \cap B_{2r}(x_0),$ we have $J_y(\barrho \,r )> \bar J -  \bareta$, or 
 		\item For all $y \in \str_{\e,  \eta \RR}  \cap B_{2r}(x_0),$ we have $J_y(4\eta r) \leq \bar J -\eta$. 
 		  	\end{enumerate} 
 \end{corollary}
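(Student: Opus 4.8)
The plan is to derive Corollary~\ref{cor: main app of key dichotomy} directly from Proposition~\ref{prop: key dichotomy}, the only new ingredient being the monotonicity of the ACF formula, used to pass between the threshold scales $\eta\RR$ and $\eta r$. Concretely, I would let $\eta = \eta(n,\e,\bar{J}_0,\bareta,\barrho)$ be exactly the constant produced by Proposition~\ref{prop: key dichotomy} for the parameters $n,\e,\bareta,\barrho,\bar{J}_0$, and claim this same $\eta$ works.

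First, fix $\RR \in (0,1]$, a center $x_0 \in B_2(0)$, a radius $r \in [\RR,2]$, and an admissible pair $u,v$ on $B_{10}(0)$ with $\sup_{x\in B_{4r}(x_0)} J_x(4) \le \bar{J} \le \bar{J}_0$. If $\str_{\e,\eta\RR}\cap B_{2r}(x_0) = \varnothing$, then both alternatives in the corollary hold vacuously, so I would assume this set is nonempty. Applying Proposition~\ref{prop: key dichotomy} at the point $x_0$ with scale $r$ then yields one of its two alternatives for the (possibly larger) set $\str_{\e,\eta r}\cap B_{2r}(x_0)$: either $J_y(\barrho\,r) > \bar{J} - \bareta$ for all such $y$, or $J_y(4\eta\,r) \le \bar{J} - \eta$ for all such $y$.

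The second step is the scale comparison. Since $\RR \le r$ we have $\eta\RR \le \eta r$, and monotonicity of $s\mapsto J_y(s)$ gives $J_y(\eta\RR) \le J_y(\eta r)$ for every $y$, so that $\str_{\e,\eta\RR} \subseteq \str_{\e,\eta r}$. Hence whichever alternative Proposition~\ref{prop: key dichotomy} delivers on $\str_{\e,\eta r}\cap B_{2r}(x_0)$ holds a fortiori on its subset $\str_{\e,\eta\RR}\cap B_{2r}(x_0)$, which is exactly the dichotomy asserted.

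I do not expect a genuine obstacle: the corollary is a routine restatement of Proposition~\ref{prop: key dichotomy} phrased with a single threshold scale $\eta\RR$ that is uniform over $r \in [\RR,2]$, which is the form needed in Section~\ref{sec: rect}. The only points deserving a word of care are (a) that the proof of Proposition~\ref{prop: key dichotomy} (through Lemma~\ref{lemma: pre dichotomy}) uses only the energy bound $\sup_{B_{4r}(x_0)}J_x(4)\le\bar{J}$ and not membership of $x_0$ in $\str$, so the corollary is legitimately stated for an arbitrary center $x_0 \in B_2(0)$; and (b) the trivial vacuous case noted above.
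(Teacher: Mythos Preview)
Your proposal is correct and follows essentially the same approach as the paper: choose $\eta$ from Proposition~\ref{prop: key dichotomy}, apply that proposition at scale $r$, and use the containment $\str_{\e,\eta\RR}\subset\str_{\e,\eta r}$ (coming from $\RR\le r$ and monotonicity) to restrict the dichotomy to the smaller set. The paper phrases this via contrapositive (``suppose not case (1), then case (2)''), while you argue directly, but the content is identical; your remark (a) about $x_0$ not needing to lie in $\str$ is also well observed.
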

 \begin{proof}
Choose $\eta>0$  according to Proposition~\ref{prop: key dichotomy}.  Suppose we are not in case (1) of the corollary. So, there is some $y \in \str_{\e,\eta \RR } \cap B_{2r}(x_0)$ with
$J_y(\barrho\, r ) \leq  \bar J -  \bareta.$
Since $r\geq \RR$ and  $\str_{\e, \eta \RR} \subset \str_{\e , \eta r}$, we see that $y \in \str_{\e, \eta r}$. Proposition~\ref{prop: key dichotomy} applied at scale $r$ then implies $J_y(4\eta\, r) \leq \bar J -\eta $ for all $y \in \str_{\e , \eta r} \cap B_{2r}(x_0)$, and thus for all  $y \in \str_{\e , \eta \RR} \cap B_{2r}(x_0)$ thanks again to the containment $\str_{\e, \eta \RR} \subset \str_{\e , \eta r}$. We are thus in case (2) of the corollary.
\end{proof}

\section{Quantitative estimates for $\str_\e$ and Rectifiability }
The main goal of this section is to prove Theorem~\ref{thm: main estimates} and Theorem~\ref{thm: main rectifiability}. In Section~\ref{ssec: packing down to scale R} we construct a good covering of $\str_{\e,\eta R}$ with estimates by balls of radius at least $R$, and in Sections~\ref{ss: proof of quant ests} and \ref{sec: rect} we prove Theorem~\ref{thm: main estimates} and \ref{thm: main rectifiability} respectively. 
In the previous section, we saw how Proposition~\ref{lemma: pre dichotomy}'s dichotomy took a simple form thanks to the fact that every ``cone'' for the Alt-Caffarelli-Friedman monotonicity formula is translationally invariant along some $(n-1)$-dimensional affine subspace. Thanks to this fact, in Section~\ref{ssec: packing down to scale R} we give a  covering construction that is substantially simpler than the covering used in many applications of the Naber-Valtorta machinery to singularity analysis. In particular, we can avoid entirely the ``good tree/bad tree construction'' and instead prove Lemma~\ref{lem: main packing} below with a fairly straightforward stopping time argument. 
\subsection{Main covering construction}\label{ssec: packing down to scale R}
This section is dedicated to Lemma~\ref{lem: main packing}, which is the main covering construction  that will be iterated to prove the quantitative estimates of Theorem~\ref{thm: main estimates}. Given $\e>0$ and $R \in (0,1]$, this lemma gives a covering, with estimates, of the $(\e, \eta R)$ stratum  for some  $\eta \ll 1$,  by balls of radii at least $R$.  The two key ingredients in the proof are the key dichotomy of Proposition~\ref{prop: key dichotomy} and the packing estimates of Proposition~\ref{lem: packing estimate for disjoint balls with small energy drop}.
The basic idea, which of course requires some technical modification, is the following. At each point in $x\in  \str_{\e,\eta R}\cap B_1(0)$, choose the smallest scale $r_x \in [R,1]$ such that $J_x(\rho \, r_x)$ stays uniformly large, and take a Vitali subcover of the corresponding cover. By design, the hypotheses of Proposition~\ref{lem: packing estimate for disjoint balls with small energy drop} are satisfied by this cover, and this gives us the packing estimates in part (2) of the lemma below.  Then, the dichotomy of the previous section gives us part (3):  either the drop in the ACF monotonicity formula stays small all the way down to scale $r_x=R$, or else the drop becomes large at some scale $r_x>R$ and we can apply Proposition~\ref{prop: key dichotomy}.

\begin{lemma}\label{lem: main packing}
	Fix $n \geq 2$, $\e>0$, and $\bar{J}_0>0$. 
	There are positive constants $C$ and $\eta$ depending on $n, \e, $ and $\bar{J}_0$ such that the following holds. 
	 Let $u, v :B_{10}(0)\to \R$ be nonnegative continuous functions satisfying \eqref{eqn: ACF setup} with $\bar{J} := \sup_{B_4(0)} J_x(4) \leq \bar J_0$,  and fix $\RR \in (0,1]$.
There is a  collection of balls $\{ B_{r_x}(x) \}_{x \in \mathcal{C}}$ with  $r_x \in [\RR, \eta]$ and $x \in  \str_{\e, \eta \RR}\cap B_1(0)$ satisfying the following properties:
	\begin{enumerate}
		\item The balls form a covering of the $(\e, \eta \RR)$ stratum in $B_1(0)$, that is, 
		\[
		\str_{\e ,\eta \RR} \cap B_1(0) \subset \bigcup_{x \in \mathcal{C}} B_{r_x}(x)\,.
		\]
		\item The balls satisfy the packing estimates 
		\[\sum_{x \in \mathcal{C} } r_x^{n-1}  \leq C.
		\]
		\item For every $x \in \mathcal{C}$, either $r_x= \RR$ or 
		\[
		\sup_{y \in B_{4r_x}(x) } J_y(4r_x) \leq \bar J -\eta.
		\] 
	\end{enumerate}
\end{lemma}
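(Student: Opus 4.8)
The plan is to build the covering by a stopping-time argument on scales, run down from scale $\sim \eta$, and then feed the resulting family of balls into Proposition~\ref{lem: packing estimate for disjoint balls with small energy drop} for the packing bound and into Corollary~\ref{cor: main app of key dichotomy} (equivalently Proposition~\ref{prop: key dichotomy}) for property (3). Fix the dimensional constants $\bareta$, $\barrho$ implicitly required by the dichotomy (we will choose them as small absolute fractions below), let $\etaone = \etaone(n,\e)$ be the constant from Proposition~\ref{lem: packing estimate for disjoint balls with small energy drop}, and let $\eta = \eta(n,\e,\bar J_0)$ be small enough to satisfy the hypotheses of Corollary~\ref{cor: main app of key dichotomy} with these choices, and also small enough that $\eta < \etaone \barrho$ and $\eta$ is smaller than the $\eta$ produced by that corollary. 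All radii $r_x$ will be of the form $\eta \cdot 2^{-k}$ for integers $k \geq 0$ with $\eta 2^{-k} \geq R$, plus the terminal value $R$ itself.

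\emph{First, the stopping-time definition of $r_x$.} For each $x \in \str_{\e,\eta R}\cap B_1(0)$, define $r_x$ to be the largest radius $s \in \{\eta, \eta/2, \eta/4,\dots\}\cap[R,\eta]$ such that
$J_x(\barrho\, s) > \bar J - \bareta$ holds for \emph{that} $s$; if this fails already at $s = \eta$, set $r_x = \eta$ but we will handle this boundary case by the dichotomy; if it holds all the way down, set $r_x = R$. (A cleaner bookkeeping: let $r_x$ be the largest dyadic $s\in[R,\eta]$ at which one is still in case (1) of Corollary~\ref{cor: main app of key dichotomy} applied at scale $s$; below $r_x$ one has dropped into case (2).) The point of the dichotomy is precisely that this is a property of the whole set $\str_{\e,\eta R}\cap B_{2s}$ at once, not of $x$ alone, which is what makes the stopping time well-behaved. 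Having defined $r_x$ for every such $x$, take a Vitali subcover: extract a countable (in fact, by the estimates below, finite) subfamily $\{B_{r_x}(x)\}_{x\in\mathcal C}$ with the $B_{r_x/5}(x)$ pairwise disjoint and $\str_{\e,\eta R}\cap B_1(0)\subset \bigcup_{x\in\mathcal C} B_{r_x}(x)$. This gives property (1) directly.

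\emph{Second, the packing estimate (2).} Here is where Proposition~\ref{lem: packing estimate for disjoint balls with small energy drop} enters. Group the balls of $\mathcal C$ by their radius: for each dyadic value $s\in[R,\eta]$, let $\mathcal C_s = \{x\in\mathcal C : r_x = s\}$. By the defining property of the stopping time, every $x\in\mathcal C_s$ satisfies $J_x(\barrho\, s) > \bar J - \bareta$, hence (since $\eta \ll \etaone\barrho$ and $\barrho s \geq \eta s/s \cdot \barrho$, i.e. $\etaone s \leq \barrho s$ after shrinking $\etaone$ if needed — or simply by choosing $\bareta \leq \etaone$ and noting $\etaone r_p \leq \barrho r_p$) we get $J_x(\etaone s) \geq \bar J - \etaone$; also $x\in \str_{\e,\eta R}\cap B_1(0)\subset \str_{\e,R}\cap B_1(0)$ and $R \leq s \leq 1$. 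The balls $\{B_{2s/5}(x)\}_{x\in\mathcal C_s}$ — after rescaling the disjointness radius — satisfy the hypotheses \eqref{e:assume} of Proposition~\ref{lem: packing estimate for disjoint balls with small energy drop} (modulo the trivial rescaling $r_p \mapsto r_p/5$ absorbed into constants), so $\sum_{x\in\mathcal C_s} s^{n-1}\leq C(n,\e)$. There are, however, infinitely many dyadic scales $s$ in principle, so one cannot just sum over $s$. The fix is standard: instead of grouping by exact radius, apply Proposition~\ref{lem: packing estimate for disjoint balls with small energy drop} \emph{once} to the full Vitali family $\{B_{r_x/5}(x)\}_{x\in\mathcal C}$ — it is a single disjoint family, each ball of which satisfies \eqref{e:assume} with its own radius $r_p = r_x/5$ — yielding $\sum_{x\in\mathcal C}(r_x/5)^{n-1}\leq C(n)$, hence $\sum_{x\in\mathcal C} r_x^{n-1}\leq C(n)$. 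This also shows $\mathcal C$ is finite. I expect this is the step that needs the most care: one must verify that the single condition \eqref{e:assume} is genuinely satisfied by every ball in the raw Vitali family simultaneously, which comes down to the stopping-time guaranteeing $J_x(\barrho r_x) > \bar J - \bareta$ uniformly and choosing $\bareta, \eta$ compatibly with $\etaone$.

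\emph{Third, property (3).} Fix $x\in\mathcal C$ with $r_x > R$. By the stopping-time definition, at the \emph{next} dyadic scale $s' = r_x/2 \geq R$ one has fallen into case (2) of Corollary~\ref{cor: main app of key dichotomy} applied at scale $s'$ (equivalently Proposition~\ref{prop: key dichotomy} at that scale, with center $x_0 = x$), which yields $J_y(4\eta s') \leq \bar J - \eta$ for all $y\in\str_{\e,\eta R}\cap B_{2s'}(x_0)$ — but what we want is the cruder statement $\sup_{y\in B_{4r_x}(x)} J_y(4r_x)\leq \bar J - \eta$, over \emph{all} $y$, not just $y \in \str_{\e,\eta R}$. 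To get this we should instead apply the dichotomy lemma~\ref{lemma: pre dichotomy} directly (it is stated for all $x\in B_\tau(L)\cap B_2$, not only stratum points, via the first conclusion $J_x(\barrho) > \bar J - \bareta$ on a tube and its failure giving the drop): if $\sup_{y\in B_{4r_x}(x)} J_y(4r_x) > \bar J - \eta$ then there is a point $y$ with tiny $J$-drop at scale comparable to $r_x$, and Lemma~\ref{lemma: pre dichotomy} (rescaled to $B_{r_x}(x)$) forces $J_x(\barrho r_x) > \bar J - \bareta$ to persist at the \emph{smaller} scale $r_x/2$ as well — contradicting that $r_x$ was the stopping scale. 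So after possibly adjusting the bookkeeping to phrase the stopping time in terms of the \emph{hypothesis} of Lemma~\ref{lemma: pre dichotomy} rather than its conclusion, property (3) follows. The delicate point throughout is the matching of the four small parameters $\eta, \tau, \bareta, \barrho$ and $\etaone$: they must be chosen in the order $\barrho,\bareta$ (absolute, or depending on $n$) $\rightsquigarrow \etaone(n,\e)$ $\rightsquigarrow \eta(n,\e,\bar J_0)$, so that the hypotheses of Propositions~\ref{prop: key dichotomy} and \ref{lem: packing estimate for disjoint balls with small energy drop} are simultaneously met at every scale $s\in[R,\eta]$. Once this is set up, conclusions (1)--(3) fall out as above.
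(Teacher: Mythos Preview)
Your overall strategy is right—stopping time, Vitali, Proposition~\ref{lem: packing estimate for disjoint balls with small energy drop} for (2), the dichotomy for (3)—but there is a genuine gap: you are missing the \emph{subdivision step}, and without it your argument for property~(3) cannot close.

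Concretely, the dichotomy (Proposition~\ref{prop: key dichotomy} / Corollary~\ref{cor: main app of key dichotomy}) at scale $r$ yields, in case~(2), a drop $J_y(4\eta r)\le\bar J-\eta$. You want $\sup_{B_{4r_x}(x)}J_y(4r_x)\le\bar J-\eta$. These match only if $r_x=\eta r$, i.e.\ if the final ball radius is $\eta$ times the scale at which you invoke the dichotomy. In the paper this is handled explicitly: the stopping time produces scales $\hat r_y\in[R,1]$, and then each ball $B_{\hat r_y}(y)$ with $\hat r_y>R$ is \emph{subdivided} into $O(\eta^{-n})$ balls of radius $r_x=\eta\hat r_y$. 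The dichotomy is applied at scale $\hat r_y$, giving the drop at scale $4\eta\hat r_y=4r_x$. Your argument skips this; your attempt to get (3) directly at the stopping scale gives drop at $2\eta r_x\ll 4r_x$, and your contrapositive repair via Lemma~\ref{lemma: pre dichotomy} mismatches scales the same way (applying it at scale $r_x$ requires the hypothesis $J_y(4\eta r_x)\ge\bar J-\eta$, not $J_y(4r_x)\ge\bar J-\eta$; applying it at scale $r_x/\eta$ yields a conclusion about $J_z(\barrho r_x/\eta)$, which by monotonicity says nothing about $J_z(\barrho r_x/2)$). The subdivision is also what guarantees $r_x\le\eta$ as the lemma requires.

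Two smaller issues. First, your stopping time is stated backwards: since $s\mapsto J_x(\barrho s)$ is increasing, the \emph{largest} $s$ with $J_x(\barrho s)>\bar J-\bareta$ is always the top scale; the correct definition (as in the paper) is the infimum over $r\in[R,1]$. Your phrase ``if it holds all the way down, set $r_x=R$'' shows you intend the correct direction, but the written definition is inconsistent. Second, your boundary case ``$r_x=\eta$ with the condition already failing'' is precisely the case $\hat r_y=1$ in the paper (handled by a direct count, not the packing proposition), and restricting the stopping time to $[R,\eta]$ rather than $[R,1]$ leaves this case untreated.
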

\begin{proof}
{\it Step 0:} Let us begin by fixing parameters. 
%
%Fix $\rho \leq c(n)$ where $c(n)\leq 1$ will be specified later. 
%
Let $\etaone= \etaone(n, \e )$ be chosen according to Proposition~\ref{lem: packing estimate for disjoint balls with small energy drop} and
 let $\barrho =  \etaone/10$.  
Choose $\eta$ according to the dichotomy of Corollary~\ref{cor: main app of key dichotomy}, depending on $n, \e, \bar J_0 ,\bareta, $ and $\barrho$ and thus on $n, \e, $ and $\bar J_0$. Up to possibly further decreasing $\eta$, we may assume that $\eta \leq \etaone$ and $\eta \leq 1/10$.  
\medskip

{\it Step 1:} We construct the collection of balls and show that it forms a cover of $\str_{\e, \eta\RR}\cap B_1(0)$. 
For each $x \in \str_{\e, \eta \RR}\cap  B_1(0)$, define the stopping time
\begin{equation}\label{eqn: stopping time}
	\hat{r}_x := \inf\left\{ r \in [R, 1] :  J_x( \barrho\,  r) > \bar J - \bareta \right\},
\end{equation}
with the convention that $\hat{r}_x =1$ if $J_x(\barrho ) \leq \bar{J} -\bar\eta$.
The collection $\{B_{\hat{r}_x/5}(x) \}_{x \in \str_{\e ,\eta \RR}\cap B_1(0)}$ is a cover of $\str_{\e, \eta \RR}\cap B_1(0)$. 
We apply the Vitali covering lemma to find a subset $\hat{\mathcal{C}}\subset \str_{\e, \eta \RR}$ such that the balls $\{B_{\hat{r}_x/5}(x) \}_{x \in \hat{\mathcal{C}}}$ are disjoint and the collection $\{B_{\hat{r}_x}(x) \}_{x \in \hat{\mathcal{C}}}$ forms a cover of   $\str_{\e, \eta \RR}$. 
We split this set of ball centers into three subsets:
\begin{align*}
	\mathcal{G} := \{ x \in \hat{\mathcal{C}}  \ : \ \hat{r}_x = R\} \,, \qquad \hat{\mathcal{A}} := \{ y \in \hat{\mathcal{C}}  \ : \ \hat{r}_y  \in(R,1)\}\, , \qquad \hat{\mathcal{V}} := \{ y \in \hat{\mathcal{C} } \ : \ \hat{r}_y = 1\}\,.
\end{align*}
For $x \in \mathcal{G}$, let $ r_x = \hat{r}_x = \RR$. The balls $B_{r_x}(x)$ for  $x \in \mathcal{G}$ will be included the final cover. The balls $B_{\hat{r}_y}(y)$ for $y \in \hat{\mathcal{A}} \cup\hat{\mathcal{V}}$ need to be further subdivided in the following simple way. For $y \in \hat{\mathcal{A}} \cup\hat{\mathcal{V}}$, set $r_y  =\max\{\RR, \,  \eta\, \hat{r}_y\} $ and  take a maximal $r_y/2$ disjoint set $\{x_{i,y}\}_{i=1}^{N_y}$ in $B_{\hat{r}_y}(y)$. 
There are at most $C_n\eta^{-n}$ such points, i.e. $N_y \leq C_n \eta^{-n}$,
 and the collection $\{ B_{r_y}(x_{i,y})\}_{i=1}^{N_y}$ is a cover of $B_{\hat{r}_y}(y)$.
  Let 
  \[
\mathcal{A} =\{ x_{i,y}  : y \in\hat{\mathcal{A}} , \, i=1, \cdots N_y \}, \qquad \quad \mathcal{V} =\{ x_{i,y}  : y \in\hat{\mathcal{V}} , \, i=1, \cdots N_y \}.
\]
If $x \in \mathcal{A}\cup \mathcal{V}$, then $x =x_{i,y}$ for at least one $y \in \hat{\mathcal{A}}\cup \hat{\mathcal{V}}$; set $r_x = r_y$ for the smallest such $r_y$. Now, let $\mathcal{C} = \mathcal{G} \cup \mathcal{ A} \cup \mathcal{V}$. The collection $
 \{B_{r_x}(x)\}_{x \in \mathcal{C}}$ is, by construction, a cover of $\str_{\e, \eta R} \cap B_1(0)$, so part (1) of the lemma holds.
\medskip

{\it Step 2:} We now verify the condition (2) of the lemma, using Proposition~\ref{lem: packing estimate for disjoint balls with small energy drop}'s packing estimate as the main tool. 
First, if $x \in \mathcal{G}$, then $J_x (\barrho \, r_x) \geq \bar J -\bar \eta $ by the stopping time definition and the continuity of the ACF formula with respect to $r$. 
Since we have chosen our parameters so that $\barrho \leq \etaone/10$, the  monotonicity of $J_x$ guarantees that $J_x(\etaone \, r_x/10 ) \geq \bar{J} - \etaone$. 
 The balls $\{ B_{r_x/5}(x)\}_{x \in \mathcal{G}}$ are disjoint by construction.
So, we may apply Proposition~\ref{lem: packing estimate for disjoint balls with small energy drop}, with $\RR/10$ in place of $\RR$, to $\{ B_{r_x/10}(x)\}_{x \in \mathcal{G}}$ to find that
\begin{equation}\label{eqn: packing G}
	\sum_{x \in \mathcal{G}} r_x^{n-1 }= \RR^{n-1 } \# (\mathcal{G})  \leq C(n). 
\end{equation} 

We prove the packing estimate for $x \in \mathcal{A}$ in a similar way. If $y \in \hat{\mathcal{A}}$, then
 by continuity $J(\barrho\, \hat{r}_y) = \bar{J}-\bareta$. 
Again by the choice of $\barrho$ and $\bareta$ and by monotonicity,  $J_y(\etaone\, \hat{r}_y /10) \geq \bar{J} - \etaone$, and the balls in the collection $\{B_{\hat{r}_y/5}(y) \}_{y \in \hat{\mathcal{A}}}$ are pairwise disjoint by construction. So, once more we apply Proposition~\ref{lem: packing estimate for disjoint balls with small energy drop} to $\{B_{\hat{r}_y/10}(y) \}_{y \in \hat{\mathcal{A}}}$ to find that 
	$ \sum_{y \in \hat{\mathcal{A}}} \hat{r}_y^{n-1 }  \leq C(n). $
Since $r_x \leq \hat{r}_y$ for any $x =x_{i,y} \in \mathcal{A}$ and 
$N_y \leq C(n) \eta^{-n}$ for all $y \in \hat{\mathcal{A}}$, 
this directly implies
\begin{equation}\label{eqn: packing A}
	\sum_{x \in \mathcal{A}} r_x^{n-1 } \leq C(n)\eta^{-n} \sum_{y \in \hat{\mathcal{A}}} \hat{r}_y^{n-1 }  \leq C(n,\eta) \,.
\end{equation}
Finally, the packing estimate for $\mathcal{V}$ is easy. 
The balls $\{B_{1/5}(x)\}_{x \in \hat{V} }$ are pairwise disjoint, so $\#\hat{\mathcal{V} }\leq C(n)$ and $\# \mathcal{V} \leq C(n) \eta^{-n}$.
Since $r_x = \eta$ for each $x \in \mathcal{V}$, this implies that  
\begin{equation}\label{eqn: packing V}
	\sum_{x \in \mathcal{V}} r_x^{n-1 } = \eta^{n-1} \#(\mathcal{V}) \leq  C(n,\eta).
\end{equation} 
Since $\eta$ depends on $n ,\e,$ and $\bar J_0$, together \eqref{eqn: packing G}, \eqref{eqn: packing G}, and \eqref{eqn: packing V} show that condition (2) of the lemma holds with a constant $C$ depending on $n, \e,$ and $\bar{J}_0$.
\medskip

{\it Step 3:}
Now we verify the third condition: either $r_x =R$ or there is a definite energy drop in all of $B_{4r_x}(x)$. 
The main tool is Corollary~\ref{cor: main app of key dichotomy}, which we recall states that for any $x \in B_2(0)$  and $r \in [\RR,2]$, at least one of 
\begin{align*}
	J_y(\barrho \,r )> \bar J -  \bareta&  \qquad \text{ for all } y \in \str_{\e,  \eta \RR}  \cap B_{2r}(x),\\
\text{ or }\ \ 	J_y(4\eta\, r) \leq \bar J -\eta & \qquad \text{ for all } y \in \str_{\e,  \eta \RR}  \cap B_{2r}(x)\,
\end{align*}
holds. 
If $x \in \mathcal{G}$, then $r_x = \RR$ and condition (3) of the lemmas holds. 
Next, as we observed in step 2,  if $y \in \hat{\mathcal{A}}$, then by the definition of $\hat{r}_y$ and continuity,  $J_y(\barrho \, \hat{r}_y) = \bar{J} - \bareta$. 
 By Corollary~\ref{cor: main app of key dichotomy} applied with $r = \hat{r}_y$, we have 
 \begin{equation}
 	\label{eqn: J drop on bigger ball}
 J_x( 4 \eta \, \hat{r}_y)  \leq \bar J - \eta \qquad \text{ for all } x \in \str_{\e, \eta \RR} \cap B_{2\hat{r}_y}(y)\,.
  \end{equation}
  In particular, for each $x \in \mathcal{A}$ that came from subdividing $B_{\hat{r}_y}(y)$, we have $B_{4r_x}(x) = B_{4\eta \hat{r}_y}(x) \subset B_{2 \hat{r}_y}(y).$ So, keeping in mind that $r_x \leq \eta \hat{r}_y$, \eqref{eqn: J drop on bigger ball} implies that 
  \[
		\sup_{z \in B_{4r_x}(x) } J_z(4r_x) \leq \bar J -\eta \qquad \text{ for all }x \in \mathcal{A}\,.
	\] 
	Thus condition (3) of the lemma holds for all $x \in \mathcal{A}$.
	Finally,  if $y \in \hat{\mathcal{V}}$, then $J_y(\barrho ) \leq \bar{J} - \bareta$ by the definition of $\hat{r}_y$. The exact same argument given for $\mathcal{A}$ above shows that 
	  \[
		\sup_{y \in B_{4r_x}(x) } J_y(4r_x) \leq \bar J -\eta \qquad \text{ for all } x \in \mathcal{V}.
	\] 
	So, condition (3) holds for all $x \in \mathcal{V}$.
This completes the proof of the lemma. 
\end{proof}

\subsection{Proof of the quantitative estimates}\label{ss: proof of quant ests}
We can now iteratively apply Lemma~\ref{lem: main packing} finitely many times to prove Theorem~\ref{thm: main estimates} in a standard way.
\begin{proof}[Proof of Theorem~\ref{thm: main estimates}]
First, let us note how the estimate \eqref{eqn: main cover conclusion} of Theorem~\ref{thm: main estimates} implies the two estimates in \eqref{eqn: eps r mink bound}. If \eqref{eqn: main cover conclusion} holds, then for every $x \in B_r\left(\Gamma^*_{\e, r} \cap B_1(0) \right)$, there is an index $i\in \{ 1,\dots, N\}$ such that $x \in B_r(x_i)$ and thus $B_r(x) \subset B_{2r}(x_i)$, with $N \leq C \, r^{1-n}$ as in \eqref{eqn: main cover conclusion}. So, 
\[
B_r\left(\Gamma^*_{\e, r} \cap B_1(0)  \right) \subset \bigcup_{i=1}^N B_{2r}(x_i)
\]
and in particular
$
|B_r\left(\Gamma^*_{\e, r} \cap B_1(0)  \right)| \leq N \omega_n (2r)^{n}  \leq C(n,\e, \bar{J}_0)\, r.
$
This proves the first estimate in  \eqref{eqn: eps r mink bound}. Next, since $\str_\e \subset \str_{\e,r}$, the estimate \eqref{eqn: main cover conclusion} gives the upper bound
$
\mathcal{H}^{n-1}_r (\str_{\e}) \leq C(n,\e,\bar{J}_0)
$ for each $r \in (0,1]$.  Passing $r\to 0$ establishes the Hausdorff measure bound in the second estimate of   \eqref{eqn: eps r mink bound}. \medskip

Now, fix $R \in (0,1]$. We will  construct a collection of balls satisfying \eqref{eqn: main cover conclusion} with $r=R$ by inductively applying Lemma~\ref{lem: main packing}. Let $\eta= \eta (n,\e, \bar{J}_0)$ be as in Lemma~\ref{lem: main packing}. If the covering $\{B_{r_x}(x)\}_{x \in \mathcal{C}}$ provided by Lemma~\ref{lem: main packing} has $r_x =\RR$ for every $x \in \mathcal{C}$, then \eqref{eqn: main cover conclusion} follows directly from the packing estimate (2), since $\str_{\e, \eta\, R} \subset \str_{\e, R}$. So, it suffices to construct a covering as in Lemma~\ref{lem: main packing} where in part (3), we always have $r_x = \RR$. The key observation is that the definite energy drop of (3) can only occur on $\lceil\, \bar J/ \eta \, \rceil$ scales by monotonicity, and so after $\lceil\, \bar J/\eta \, \rceil$ iterations of the lemma, we can only have $r_x = \RR$. 

More specifically, we claim that there exist a (finite) sequence of covers $\{B_{r_x}(x)\}_{x \in \mathcal{C}_i}$, whose centers are $\mathcal{C}_1, \mathcal{C}_2, \dots, $ satisfying the following properties:
\begin{enumerate}
	\item[($A_i$)] Covering: $\str_{\e, \eta \RR} \cap B_1(0) \subset \bigcup_{x \in \mathcal{C}_i} B_{r_x}(x),$
	\item[($B_i$)] Packing: $\sum_{x \in \mathcal{C}_i} r_x^{n-1} \leq C \left( 1 + \sum_{x \in \mathcal{C}_{i-1}  } r_x^{n-1}\right) $ for a contant $C=C(n,\e, \bar{J})$.
	\item[($C_i$)] Energy drop: for each $x \in \mathcal{C}_i$, we either have $r_x= R$ or $\sup_{B_{4r_x}(x)} J_y(4r_x) \leq \bar J - i \eta.$
	%\item $\sup_{x \in \mathcal{C}_i }r_x \leq \eta^i.$
\end{enumerate}

Observe $(C_i)$  guarantees that one of the collections $\{ B_R(x)\}_{x \in \mathcal{C}_i} $ for $i \in \{1 ,\dots , \lceil \eta /\bar{J} \rceil \}$ satisfies \eqref{eqn: main cover conclusion}, and so the theorem will follow directly from the claim.
\medskip

Lemma~\ref{lem: main packing} gives us such a covering in the base case $i=1$. 
 Suppose we have constructed a covering $\mathcal{C}_{i-1}$ satisfying $(A_{i-1}), (B_{i-1}),$ and $(C_{i-1})$.
  We construct $\mathcal{C}_i$ satisfying $(A_{i}), (B_{i}),$ and $(C_{i})$ in the following way. 
If $x \in \mathcal{C}_{i-1}$ has $r_x=R$, then we include it in $\mathcal{C}_i$. If $x \in \mathcal{C}_{i-1}$ has $r_x >R$, then we apply Lemma~\ref{lem: main packing} to $B_{r_x}(x)$.
 This gives us a collection of balls $\{B_{r_y}(y)\}_{y \in \mathcal{C}_{x, i}}$ such that 
 \[
 \str_{\e, \eta R} \cap B_{r_x}(x) \subset \bigcup_{y \in \mathcal{C}_{x,i}} B_{r_y}(y),
 \] 
 that satisfy the packing estimates 
 \begin{equation}\label{eqn: packing 100}
 	\sum_{y \in \mathcal{C}_{x,i}} r_y^{n-1} \leq C r_x^{n-1}
 	 \end{equation}
 	  for a constant $C= C(n,\e,\bar{J}_0)$. Moreover,  thanks to $(C_{i-1})$ and condition (3) of Lemma~\ref{lem: main packing}, for each $y \in \mathcal{C}_{x,i}$, either $r_y =\RR$, or else
 \begin{equation}
 	\label{eqn: energy drop final}
 \sup_{z \in B_{4r_y}}J_z(4r_y) \leq \sup_{B_{4r_x}(x)} J_z(4r_x) \leq \bar J  - (i-1)\eta - \eta =\bar{J} -i\eta . 
  \end{equation}
We let 
\[
\mathcal{C}_i = \{x \in \mathcal{C}_{i-1} : r_x =\RR\} \cup \bigcup_{x \in \mathcal{C}_{i-1}, r_x>\RR} \mathcal{C}_{x,i}.
\]
 By construction, $\{ B_{r_x}(x) \}_{x \in \mathcal{C}_i}$ is a cover of $\str_{\e, \eta R} \cap B_1(0)$, and so $(A_{i})$ holds, and 
 $(C_i)$ follows from \eqref{eqn: energy drop final} and the construction. 
  Finally, by $(B_{i-1})$  and \eqref{eqn: packing 100}, we have for a constant $C = C(n, \e ,\bar{J}_0)$, 
\begin{align*}
\sum_{x \in \mathcal{C}_i} r_x^{n-1}
 &= \sum_{\substack{x \in \mathcal{C}_{i-1} , \\ r_x = \RR}} r_x^{n-1} 
 + \sum_{\substack{x \in \mathcal{C}_{i-1}, \\ r_x >\RR}} \Big( \sum_{y \in \mathcal{C}_{x,i} }r_y^{n-1} \Big) \\
 & \leq  
 \sum_{\substack{x \in \mathcal{C}_{i-1} , \\ r_x = \RR}} r_x^{n-1} 
 + C \sum_{\substack{x \in \mathcal{C}_{i-1}, \\ r_x >\RR}}  r_x^{n-1}
 \leq C \sum_{x \in \mathcal{C}_{i-1}} r_x^{n-1}\,.
\end{align*}
 Thus $(B_i)$ holds as well.  This proves the inductive step, and thus concludes the proof of the theorem.
\end{proof}

\subsection{Rectifiability}\label{sec: rect}
We now prove Theorem~\ref{thm: main rectifiability}, which says that $\Gamma^*\cap B_1(0)$ is $\cH^{n-1}$-rectifiable. 
This proof is a standard step in the Naber-Valtorta framework:
 %and is reminiscent of the proof of Proposition~\ref{lem: packing estimate for disjoint balls with small energy drop}:
  combining the upper Ahlfors regularity of $\mathcal{H}^{n-1}\mres \str_\e$ shown in Theorem~\ref{thm: main estimates} and the $L^2$ subspace approximation of Theorem~\ref{t:l2est2}, we show the hypotheses of Naber-Valtorta's Rectifiable Reifenberg Theorem~\ref{thm: rect reif} hold, and thus $\Gamma^*_\e$ is $\cH^{n-1}$-rectifiable.
In fact, instead of arguing on $\str_\e$ directly,  we will break each $\Gamma^*_\e$ into countably many smaller pieces, each of which we will show is rectifiable.  Since the countable union of rectifiable sets is again rectifiable, it follows that $\str$ is rectifiable as well. 
\begin{proof}[Proof of Theorem~\ref{thm: main rectifiability}]
Let $\bar J_0 = \sup_{x\in B_4(0)} J_x(4)$. Fix any $\e>0$.  Let $\sigma = \sigma(n,\e, \bar J_0) >0$ be a small fixed number to be specified later in the proof, and let 
\[
S =S_{\e, \sigma}= \{ x \in B_1(0) : J(0^+) \in \left[\e, \e \exp\{\sigma \}\right]\,\} \subset \str_\e \cap B_1(0)\,.
\] 
We will prove that $S$ is $\mathcal{H}^{n-1}$-rectifiable. 
To this end, we first claim that for all $x_0 \in S$, there is a scale $r>0$ depending on $x_0$ such that
\begin{equation}
	\label{eqn: tiny log by contradition}
\log \left(\frac{ J_y(8r) }{J_y(0^+)}\right) \leq 2 \sigma \qquad \text{ for all } y \in S \cap B_r(x_0).
\end{equation}
Suppose by way of  contradiction that we can find a sequence of points $y_j \in S$ with $y_j \to x_0$ and scales $r_j \to 0$ for which $\log ({ J_{y_j}(8r_j) }/{J_{y_j}(0^+)}) >2 \sigma$.
In other words, exponentiating and multiplying through by $J_{y_j}(0^+)$, 
\[
 J_{y_j}(8r_j) > J_{y_j}(0^+) \exp\{2 \sigma \} \geq \e \exp\{2 \sigma\}\, ,
\]
where the final inequality holds because $S\subset \Gamma^*_\e$. Taking the $\limsup$ of both sides as $j \to \infty$,  we deduce from the upper semicontinuity property \eqref{eqn: USC} that $J_{x_0}(0^+) \geq  \e \exp\{2\sigma\}$. On the other hand $J_{x_0}(0^+) \leq \e \exp\{\sigma\}$ by virtue of being in $S$. We reach a contradiction and see that \eqref{eqn: tiny log by contradition} holds for $r>0$ sufficently small.\medskip

To prove that $S$ is $\mathcal{H}^{n-1}$-rectifiable, it suffices to show that $S \cap B_r(x_0)$ is rectifiable for any $x_0 \in S$ and for $r>0$ such that \eqref{eqn: tiny log by contradition} holds. Let $\mu= \mathcal{H}^{n-1}\mres S.$ 
Applying Theorem~\ref{t:l2est2}, we then see that, 
letting $r_j= 2^{-j}$,
\begin{align}\label{eqn: rect pf first one}
	\sum_{r_j \leq r} \int_{B_r(x_0)} \beta_\mu(z ,r_j)^2 \, d \mu(z) 
	&\leq  \sum_{r_j \leq r} \ \frac{C}{r_j^{n-1}}\int_{B_r(x_0)} \int_{B_{r_j}(z)} \log \left(\frac{ J_y(8r_j) }{J_y(r_j)}\right) \,d\mu(y) \, d\mu(z)
	\end{align}
Now applying  Fubini's theorem, for any $r_j \leq r$.
\begin{align*}
	\int_{B_r(x_0)} \int_{B_{r_j}(z)} \log \left(\frac{ J_y(8r_j) }{J_y(r_j)}\right) \,d\mu(y) \, d\mu(z)  
	%	& =\int_{\R^n}  \int_{\R^n} 1_{\{|y-z|<r_j\}} 1_{\{|z-x_0|<r\}} \log \left(\frac{ J_y(8r_j) }{J_y(r_j)}\right) \,d\mu(y) \, d\mu(z) \\
%	& =\int_{\R^n} \log \left(\frac{ J_y(8r_j) }{J_y(r_j)}\right) \int_{\R^n} 1_{\{|y-z|<r_j\}} 1_{\{|z-x_0|<r\}} \ \,d\mu(z) \, d\mu(y) \\
		& =\int_{\R^n} \log \left(\frac{ J_y(8r_j) }{J_y(r_j)}\right) \mu(B_{r_j}(y)\cap B_{r}(x_0)) \, d\mu(y) \\
&\leq   C\int_{B_{2r}(x_0)} \log \left(\frac{ J_y(8r_j) }{J_y(r_j)}\right) \mu(B_{r_j}(y)) \, d\mu(y)\,.
\end{align*}
Next, by the
Ahlfors upper bound in \eqref{eqn: eps r mink bound} in Theorem~\ref{thm: main estimates}, we know that $
 \mu(B_s(y) ) \leq C(n,\e, \bar J_0) s^{n-1}$
  for all $y \in B_{2r}(x_0)$ and $s \leq 2r.$ 
This together with \eqref{eqn: rect pf first one} implies
\begin{align*}
		\sum_{r_j \leq r} \int_{B_r(x_0)} \beta_\mu(z ,r_j)^2 \, d \mu(z)  &{\leq} \sum_{r_j \leq r}  \ C\int_{B_{2r}(x_0)} \log \left(\frac{ J_y(8r_j) }{J_y(r_j)}\right) \, d\mu(y) \\
	&	=  C\int_{B_{2r}(x_0)} \sum_{r_j \leq r} \log \left(\frac{ J_y(8r_j) }{J_y(r_j)}\right) \, d\mu(y) \\
	& =  C\int_{B_{2r}(x_0)}  \log \left(\frac{ J_y(8r) }{J_y(0^+)}\right) \, d\mu(y) \leq C \sigma \, r^{n-1}\,.
\end{align*}
In the final line we used \eqref{eqn: tiny log by contradition} and the Ahlfors upper bound in \eqref{eqn: eps r mink bound} once again.
Now, let us choose $\sigma>0$ small enough so that $C\sigma<\delta_{0}$, where $\delta_0$ is the dimensional constant from the rectifiable Reifenberg theorem, Theorem~\ref{thm: rect reif}. So, by Theorem~\ref{thm: rect reif}, $S\cap B_r(x_0)$ is rectifiable.
\end{proof}

\section{Uniqueness of Function Blowups}

This section is dedicated to the proof of Theorem~\ref{thm: uniqueness of blowups}. The basic idea is the following. Theorems~\ref{thm: main rectifiability} and  Theorem~\ref{thm: main estimates} imply that $\mathcal{H}^{n-1}\mres \str_\e$ is a Radon measure and has an approximate tangent plane at $\mathcal{H}^{n-1}$-a.e. point. Together with the differentiation theory for measures, we prove the existence of limits of $\Delta u^{x,r}$ and $\Delta v^{x,r}$ as distributions as $r \to 0$. Using the quantitative form of the Alt-Caffarelli-Friedman monotonicity formula of Theorem~\ref{t:ACF}, we directly relate these distributional limits to the truncated linear functions arising as the blowup limits of $u^{x,r_k}$ and $v^{x,r_k}$ along a sequence $r_k \to 0$. The independence of the former from the sequence $r_k \to 0$ allows us to prove the uniqueness of the latter.

%\begin{theorem}
%	For $\cH^{n-1}$-a.e. $x \in \Gamma^*$, there exist two numbers $\aone^x, \atwo^x > 0$ and a unit vector $\nu^x \in S^{n-1}$ such that the rescaled functions
%	\[
%		u^{x, r}(z) = \frac{u(x + r z)}{r} \qquad v^{x, r}(z) = \frac{v(x + r z)}{r}
%	\]
%	converge locally in (strong) $L^2$ topology to $\ell_+(z) = \alpha_1^x (z \cdot \nu)_+$ and $\ell_-(z) = \alpha_2^x (z \cdot \nu)_-$, respectively. Moreover, $\alpha_1^x\alpha_2^x = c(n)J_x(0^+)$, where $c(n) > 0$ is a dimensional constant.
%\end{theorem}

\begin{proof}[Proof of Theorem~\ref{thm: uniqueness of blowups}]
{\it Step 1:}
	Fix $\epsilon > 0$. We will prove the theorem under the assumption that $x \in \Gamma^*_\epsilon \cap B_{1/2}(0)$. As the statement is purely qualitative and $\Gamma^* = \cup_\epsilon \Gamma^*_\epsilon$, this will imply the conclusion.
	
By Theorems \ref{thm: main rectifiability} and \ref{thm: main estimates}, $\nu := \cH^{n-1}\mres \Gamma^*_\epsilon \cap B_1(0)$ is a Radon measure, and for $\cH^{n-1}$-a.e. $x \in \Gamma^*_\epsilon \cap B_1(0)$, 
\begin{equation}\label{eqn: density and tangent measure}
	\lim_{r \searrow 0}\ \frac{\nu(B_r(x))}{\omega_{n-1}r^{n-1}} = 1 \qquad \text{ and } \qquad \nu^{x, r}(E)  \overset{*}{\rightharpoonup }\cH^{n-1}\mres \{z \cdot \nu^x = 0\}\ \text{ for some } \nu^x \in S^{n-1}\,.
\end{equation}
Here we set $\nu^{x, r}(E) := \frac{\nu(x + r E)}{r^{n-1}}$ for $r>0$ and convergence is in the weak-$*$ topology for measures as $r \to 0$. See \cite[Theorem 10.2]{MaggiBook} for proofs of these properties of $\cH^{n-1}$-rectifiable sets with locally finite $\cH^{n-1}$ measure.

The distributional Laplacian $\Delta u$ (defined by acting on $\phi \in C^\infty_c(\R^n)$ by $\Delta u(\phi) = \int u \Delta \phi$) is positive in the sense that $\Delta u(\phi) \geq 0$ for any nonnegative test function $\phi \in C^\infty_c(\R^n)$. It is easy to show that any such distribution is a bounded linear functional on $C^0_c(\R^n)$, so by the Riesz representation theorem (see for instance \cite[Theorem 4.7]{MaggiBook}), we may express $\Delta u(\phi) = \int \phi\, d\mu_u$ for a Radon measure $\mu_u$. In the same way, the distributional Laplacian $\Delta v$ of $v$ is identified with a Radon measure $\mu_v.$ 

Applying the Lebesgue-Besicovitch differentiation theorem (see for instance \cite[Theorem 5.8]{MaggiBook}) to $\mu_u$ (resp. $\mu_v$) and $\nu$, 
 we see that for $\mathcal{H}^{n-1}$-a.e. $x \in \Gamma^*_\e$, 
 the limits
 \begin{equation}\label{eqn: densities exist}
 	 \zeta_u(x) := \lim_{r \searrow 0} \frac{\mu_u(B_r(x))}{\nu(B_r(x))}, \qquad \zeta_v(x) := \lim_{r \searrow 0} \frac{\mu_v(B_r(x))}{\nu(B_r(x))}\qquad  
  \text{  exist and are finite}\,,
  \end{equation}
and we may write $\mu_u= \zeta_u d\nu + \mu_u^s$ and $\mu_v= \zeta_v d\nu + \mu_v^s$, where $\mu_u^s$ (resp. $\mu_v^s$) and $\nu$ are mutually singular.
Let us restrict our attention, then, to those $x\in \str_\e$ such that both \eqref{eqn: density and tangent measure} and \eqref{eqn: densities exist} hold. For any such $x$, we have 
\begin{equation}
	\label{eqn: conv lap meas}\begin{split}
\mu_u^{x, r}(E) := \frac{\mu_u(x + r E)}{r} & \overset{*}{\rightharpoonup}\zeta_u(x) \,\cH^{n-1}\mres \{z \cdot \nu^x = 0\}\,,\\
 \mu_v^{x, r}(E) := \frac{\mu_v(x + r E)}{r} & \overset{*}{\rightharpoonup}\zeta_v(x) \, \cH^{n-1}\mres \{z \cdot \nu^x = 0 \}\,
 	\end{split}
\end{equation}
 in the weak-$*$ topology for measures.  Notice that the rescaled measures correspond to the distributional Laplacians of the rescaled functions $u^{x,r}$ and $v^{x,r}$, i.e.  $\mu_u^{x, r} = \Delta u^{x, r}$ and $\mu_v^{x, r} = \Delta v^{x, r}$.\medskip
%Together
%The set $\{\limsup_{r \searrow 0} \frac{\mu_i(B_r(x))}{r^{n-1}} = \infty \}$ has $\cH^{n-1}$ measure $0$.

 %We now make several observations about $\mu_i$ and $\nu$ of a purely measure-theoretic nature:
%	\begin{enumerate}
	%	\item For $\cH^{n-1}$-a.e. $x \in \Gamma^*_\epsilon$, we have that $\lim_{r \searrow 0} \frac{\nu(B_r(x))}{\omega_{n-1}r^{n-1}} = 1$, and that there exists a $\nu^x \in S^{n-1}$ such that $\nu^{x, r}(E) = \frac{\nu(x + r E)}{r^{n-1}}$ converges to $\cH^{n-1}\mres \{z \cdot \nu^x = 0\}$ in the weak-$*$ topology.
		%\item The set $\{\limsup_{r \searrow 0} \frac{\mu_i(B_r(x))}{r^{n-1}} = \infty \}$ has $\cH^{n-1}$ measure $0$.
%		\item For $\cH^{n-1}$-a.e. $x \in \Gamma^*_\epsilon$, the limit $\zeta_i(x) = \lim_{r \searrow 0} \frac{\mu_i(B_r(x))}{\nu(B_r(x))}$ exists and is finite.
	%	\item We may write $\mu_i = \zeta_i d\nu + \mu_i^s$, where $\mu_i^s$ and $\nu$ are mutually singular.
%	\end{enumerate}
	
	%Let us restrict our attention, then, to $x$ for which property (1) holds, property (2) does not occur, and the limit in (3) exists (for both $i =1, 2$). 
	%At any such point, 

{\it Step 2:} We claim  that 
	\begin{equation}
		\label{eqn: unif bounded in L2}
	\limsup_{r \searrow 0} \left\{  \|u^{x, r}\|_{L^2({B_1})}  +   \|v^{x, r}\|_{L^2({B_1})}\right\} < \infty.
		\end{equation}
	 To this end, set
	\[
		q_{u, r} = \max\{\|u^{x, r}\|_{L^2(B_1)}, 1\}, \qquad q_{v, r} = \max\{\|v^{x, r}\|_{L^2(B_1)}, 1\}.
	\] 
%	We now consider $u^{x, r}$ and $v^{x, r}$ in more detail, starting with the observation that 
Since $x \in \str_\e$, we have  $\epsilon \leq J_x(0^+) = \lim_{r \searrow 0} J_x(r)$, and so $\lim_{r\searrow 0} \log \frac{J_x(r)}{J_x(0^+)} = 0$. 
	So, we can apply the stability inequality, Theorem \ref{t:ACF}, to $\hat{u}_r := u^{x, r}/q_{u, r}$  and $\hat{v}_r : =v^{x, r}/q_{v, r}$ (as usual, noting that dividing by positive constants leaves the ratio $J_0(1)/J_0(0^+)$ unchanged) to obtain
	\begin{equation}\label{e:stabrenorm}
		\int_{B_1}\left| \hat{u}_r - \aone^{r} (z \cdot \nu^{r})^+\right|^2 + \int_{B_1}\left| \hat{v}_r - \atwo^{r} (z \cdot \nu^{r})^-\right|^2 \leq C \log \frac{J_x(r)}{J_x(0^+)}
	\end{equation}
	for positive constants $\aone^{r},\atwo^{r}$ and vectors $\nu^{r}\in \mathbb{S}^{n-1}$. (These depend on $x$ as well as $r$, but we suppress the dependence in the notation since $x$ is fixed throughout this step). Clearly $\aone^{r}, \atwo^{r} < C$ uniformly for all $r$ sufficiently small.  
%	From the Caccioppoli inequality we have that
%	\[
%		\left\|\frac{u_i^{x, r}}{q_{i, r}}\right\|_{H^1(B_{1/2})} \leq C.
%	\]
	Assume by way of contradiction that $q_{u, r_k} \rightarrow \infty$ along a sequence $r_k \searrow 0$. Up to a  subsequence,  $\aone^{r_k} \rightarrow \aone^* \in [0, C]$, $\atwo^{r_k} \rightarrow \atwo^* \in [0, C]$, and $\nu^{r_k} \rightarrow \nu^*\in S^{n-1}$, and thus  \eqref{e:stabrenorm} tells us that $\hat{u}_{r_k} \to \hat{u}^*:= \aone^*(z \cdot \nu^*)^+$ and 
	$\hat{v}_{r_k} \to \atwo^*(z \cdot \nu^*)^-$ in $L^2(B_1)$.
%	 After passing to a subsequence, which we do not relabel, the sequences ${u^{x, r}}/{q_{u, r_k}} $ and ${v^{x, r}}/{q_{v, r_k}}$  converge weakly in $H^1(B_{1/2})$ and strongly in $L^2(B_{1/2})$ to functions $u^*, v^* \in H^1(B_{1/2})$ respectively. 
	% Up to further subsequences, we may also assume that $\aone^{x, r_k} \rightarrow \aone^* \in [0, C]$, $\atwo^{x, r_k} \rightarrow \atwo^* \in [0, C]$, and $\nu^{x, r_k} \rightarrow \nu^*$; from \eqref{e:stabrenorm}, we see that $u^* = \aone^* (x \cdot \nu^*)^+$ and $v^* = \atwo^* (x \cdot \nu^*)^-$ and the sequences of functions converge in $L^2(B_1)$.
	Moreover, $\aone^* > 0$ since $\|\hat{u}_{r_k}\|_{L^2(B_1)} = 1$ for all $k$ large enough.
%	$\aone^{x, r_k} > c > 0$ uniformly for $k$ large enough, as $\|u^{x, r_k}/q_{u, r_k}\|_{L^2(B_1)} = 1$ and $\aone^{x, r_k}(x \cdot \nu^{x, r_k})$ is arbitrarily close to this function in $L^2$; this implies that $\aone^* > 0$.
	
	From \eqref{eqn: densities exist} we have that $\Delta u^{x, r_k}(B_1) \rightarrow \omega_{n-1}\, \zeta_u(x)< \infty$ and  $\Delta v^{x, r_k}(B_1) \rightarrow \omega_{n-1} \,\zeta_v(x) < \infty$. Since  $q_{u, r}, q_{v, r} \geq 1$, we must have 
	\[
	\limsup_{r \searrow 0}\Delta
	% \frac{u^{x, r}}{q_{u, r}}
	\hat{u}_{r}(B_1) \leq \omega_{n-1}\zeta_u(x) < \infty
	\quad 
	\text{ and }
	\quad
	\limsup_{r \searrow 0}\Delta \hat{v}^{x, r}(B_1) \leq \zeta_v(x) < \infty\,.
	\] 
	Up to passing to a further subsequence, this implies that $\Delta \hat{u}_{r_k} \rightarrow \Delta \hat{u}^*$ in the sense of distributions on $B_{1/2}$. Thanks to \eqref{eqn: conv lap meas} and the definition of $\hat{u}_r$, we have
	\[
	\Delta \hat{u}^* = \left( \lim_{k \to \infty} \frac{1}{q_{u, r_k}} \right) \zeta_u(x) \cH^{n-1}\mres\{z \cdot \nu^x = 0 \}.
	\]
	 This implies, in particular, that $\Delta \hat{u}^* = 0$, i.e. that $\hat{u}^*$ is harmonic on $B_{1/2}$. But this clearly contradicts the expression $\hat{u}^* = \aone^* (z \cdot \nu^*)_+$ with $\aone^* > 0$ obtained above. This proves \eqref{eqn: unif bounded in L2}.
\\

{\it Step 3:}
	With this in mind, we may repeat the blowup argument in Step 2 above except this time without normalizing by $q_{u, r}$,  $q_{v, r}$. 
	This shows that, along every sequence of radii $r_k \searrow 0$, there exists a subsequence with $u^{x, r_k} \rightarrow u^* := \aone^*(z\cdot \nu^*)^+$ and $v^{x, r_k} \rightarrow v^* :=\atwo^*(z\cdot \nu^*)^-$ in $L^2(B_{1})$,
	 %with $v_i = \alpha_i^* (z \cdot \nu^*)_\pm$ 
	 for some $\aone^*, \atwo^* \in [0,C] $ and $\nu^*\in \mathbb{S}^{n-1}$, and moreover $\Delta u^{x, r_k} \rightarrow \Delta u^*$ and $\Delta v^{x, r_k} \rightarrow \Delta v^*$ as distributions. 
	 To complete the proof of the theorem, we must show that $\aone^*, \atwo^*$, and $\nu^*$ do not depend on the sequence $r_k$. 	 The convergence \eqref{eqn: conv lap meas}  implies that $\aone^* = \zeta_u(x)$ and $\atwo^*= \zeta_v(x)$,  and that $\nu^*$ is either $\nu^x$ or $- \nu^x$ (but either is possible).  We must, then, show that $\nu^*$ is one or the other independent of subsequence. Before doing so, however, note that a direct application of Corollary \ref{cor: J convergence} implies that $c_* \zeta_u(x) \zeta_v(x) = J_x(0^+)$, so in particular both $\zeta_u(x)$ and $\zeta_v(x)$ are nonzero.
	
	To show that $\nu^*$ is independent of subsequence, observe that as $\pm \nu^x $ are the only two limit points for $\nu^{x, r}$, for any $\delta > 0$, there exists an $r_0 > 0$ such that  $\min \{|\nu^{x, r} - \nu^x|, |\nu^{x, r} + \nu^x|\} < \delta$ for all $r < r_0$. If $\nu^{x, r}$ has more than one limit point, then there must be a sequence $r_k \searrow 0$ and another sequence $s_k < r_k$ with $1 - \frac{s_k}{r_k} \rightarrow 0$ such that $|\nu^{x, r_k} - \nu^x | < \delta$ and $|\nu^{x, s_k} + \nu^x| < \delta$. The first of these implies that
	\[
		\limsup_{k \rightarrow \infty} \int_{B_1} |u_1^{x, r_k} - \zeta_1(x) (x \cdot \nu^x)_+|^2 \leq \limsup_{k \rightarrow \infty} \int_{B_1} |u_1^{x, r_k} - \zeta_1(x) (x \cdot \nu^{x, r_k})_+|^2 + C \delta \leq C \delta.
	\]
	Restricting to $B_{s_k/r_k}$ and rescaling,
	\[
		\int_{B_1} |u_1^{x, s_k} - \zeta_1(x) (x \cdot \nu^x)_+|^2 \leq C \delta \left(\frac{r_k}{s_k}\right)^{n-2} \leq C \delta.
	\]
	On the other hand, $\nu^{x, s_k}$ is close to $- \nu^x$, so
	\[
		\int_{B_1} |u_1^{x, s_k} + \zeta_1(x) (x \cdot \nu^x)_+|^2 \leq C \delta.
	\]
	This implies that $\zeta_1(x) \leq C \sqrt{\delta}$ for any $\delta > 0$, which contradicts that $\zeta_1(x) > 0$. Finally, thanks to Lemma~\ref{lem: improved convergence}, the $L^2$ convergence immediately implies $W^{1,2}$ convergence.
\end{proof}

%
%\appendix
%\section{Things that are true but not needed}
%

\bibliographystyle{alpha}
\bibliography{RectQACFref.bib}

\end{document}